\newtheorem{theorem}{Theorem}[section]
\newtheorem{proposition}[theorem]{Proposition}
\newtheorem{corollary}[theorem]{Corollary}
\theoremstyle{definition}
\theoremstyle{remark}
\newtheorem{remark}[theorem]{Remark}
\newtheorem{example}[theorem]{Example}
\newcommand{\mb}[1]{\ensuremath{\mathbf{#1}}}
\newcommand{\CC}{\mathbb C}
\newcommand{\NN}{\mathbb N}
\newcommand{\RR}{\mathbb R}
\newcommand{\ii}{\mathfrak i}
\newcommand{\bb}{\mathfrak b}
\newcommand{\cT}{\mathcal T}
\newcommand{\cL}{\mathcal L}
\newcommand{\cM}{\mathcal M}
\newcommand{\cA}{\mathcal A}
\newcommand{\cB}{\mathcal B}
\newcommand{\argmin}{\mathrm{arg\,min}}
\newcommand{\Dom}{\mathrm{Dom}}
\newcommand{\Spec}{\mathrm{Spec}}
\newcommand{\dist}{\mathrm{dist}}
\author[J. Ovall]{Jeffrey S. Ovall} \address{Jeffrey S. Ovall,
  Fariborz Maseeh Department of Mathematics and Statistics, Portland
  State University, Portland, OR 97201}
\email{jovall@pdx.edu}
\author[R. Reid]{Robyn Reid} \address{Robyn Reid, Fariborz Maseeh Department of Mathematics and Statistics, Portland
  State University, Portland, OR 97201}
\email{reid3@pdx.edu}
\begin{document}

\title[Eigenvector localization]{An algorithm for identifying eigenvectors exhibiting strong spatial localization} \date{\today}

\begin{abstract}
  We introduce an approach for exploring eigenvector localization
  phenomena for a class of (unbounded) selfadjoint
  operators.  More specifically, given a target region and a
  tolerance, the algorithm identifies candidate eigenpairs for which
  the eigenvector is expected to be localized in the target region to
  within that tolerance.  Theoretical results, together with detailed
  numerical illustrations of them, are provided that support our algorithm.
  A partial realization of the algorithm is described and tested,
  providing a proof of concept for the approach. 
\end{abstract}

\maketitle

\section{Introduction}\label{Intro}
This paper concerns the development of new computational methods for exploring
eigenvector localization phenomena
for selfadjoint elliptic eigenvalue problems,
\begin{align}\label{ModelProblem}
  \cL\psi\doteq -\nabla\cdot(A\nabla \psi)+V\psi=\lambda\psi\mbox{ in }\Omega\quad,\quad \psi=0\mbox{ on }\partial\Omega~,
\psi\not\equiv 0\mbox{ in }\Omega,
\end{align}
where $\Omega\subset\RR^d$ is a bounded, connected open set,
$V\in L^\infty(\Omega)$ is non-negative, and there are constants $c,C>0$ such that the
symmetric matrix
$A:\Omega\to\RR^{d\times d}$ satisfies
\begin{align*}
c\mb{v}^t\mb{v}\leq \mb{v}^tA(x)\mb{v}\leq C\mb{v}^t\mb{v}\mbox{ for
  all }\mb{v}\in\RR^d\mbox{ and a.e. } x\in\Omega~.
\end{align*}
When $d=2$, we require $A\in [L^{\infty}(\Omega)]^{d\times d}$; and
when $d>2$, we require that $A$ is uniformly Lipschitz in each of its components.
The operator
$\cL$ is viewed as an unbounded operator on $L^2(\Omega)$, with
domain $\Dom(\cL)=\{v\in H^1_0(\Omega):\, \cL v\in L^2(\Omega)\}$.  We denote the (real) spectrum
of $\cL$ by $\Spec(\cL)$, and recall that it consists of a sequence,
$\inf_\Omega V<\lambda_1<\lambda_2\leq\lambda_3\leq\cdots$, that has no finite accumulation points.  Furthermore, the eigenspace
$E(\lambda,\cL)=\{v\in\Dom(\cL):\cL v=\lambda v\}$ is finite dimensional for
each $\lambda\in\Spec(\cL)$.

The assumptions on $\cL$ guarantee that it has the \textit{unique
  continuation property}
(cf.\cite{Alessandrini1998,Garofalo1987,Hormander1983}), i.e. any
function $v\in H^1(\Omega)$ satisfying $\cL v=0$ in $\Omega$ that
vanishes on a non-empty open subset of $\Omega$ must vanish
identically on $\Omega$.  A simple consequence of this is that no
eigenvector $\psi$ of~\eqref{ModelProblem} may vanish identically on
any open subset of $\Omega$.  However, it may be the case that nearly
all of the ``mass'' of an eigenfunction $\psi$ is concentrated in a
non-empty, open, proper subset $R$ of $\Omega$.  In this case, we say
that $\psi$ is \textit{localized} in $R$.  For convenience, we will
refer to a non-empty, open, proper subset $R$ of $\Omega$ as a
\textit{subdomain} of $\Omega$. We now quantify
what we mean by localization in $R$.  Given a function $v\in L^2(\Omega)$ and
a subdomain $R$, the complementary quantities
\begin{align}\label{LocalizationQuantities}
\delta(v,R)=\|v\|_{L^2(\Omega\setminus R)}/\|v\|_{L^2(\Omega)}\quad,\quad \tau(v,R)=\|v\|_{L^2(R)}/\|v\|_{L^2(\Omega)}~,
\end{align}
provide measures of localization/concentration of $v$ within the
subdomain $R$.  Clearly,
\begin{align*}
\delta^2(v,R)+\tau^2(v,R)=1\quad,\quad
  \delta(v,R)=\tau(v,\Omega\setminus R)\quad,\quad  \tau(v,R)=\delta(v,\Omega\setminus R)~,
\end{align*}
and we have
$\delta(\psi,R),\tau(\psi,R)\in (0,1)$ for eigenvectors $\psi$ and any
subdomain $R$.  Given a tolerance $\delta^*\in(0,1/2)$,
we say that \textit{$v$ is localized in $R$ with tolerance $\delta^*$}
if $\delta(v,R)\leq\delta^*$ or, equivalently,
$\tau(v,R)\geq\sqrt{1-(\delta^*)^2}$.
Note that we do not require that $R$ is
connected, although it will be in many applications. 

Localization of eigenvectors may occur due to properties of the
coefficients $A$ and $V$, the shape of the domain, and/or the boundary
conditions.  A 2013 overview of the geometric structure of
eigenvectors for $\cL=-\Delta$ that highlights eigenvector localization due to domain
geometry is provided in~\cite{Grebenkov2013} (see
also~\cite{Nguyen2013,Hassell2009,Burq2005,Marklof2012}).  In a series of recent articles
\cite{Arnold2016,Arnold2019,Arnold2019a,Filoche2012}, starting in
2012, the authors investigate localization due to highly discontinuous
potentials $V$ (``disordered media''), providing some theoretical
insight into the mechanisms driving localization, a novel numerical
method for approximating likely subdomains in which localization may
occur, and a simple estimate of the smallest eigenvalue whose
eigenvector is localized in each such subdomain.
In~\cite{Filoche2012}, the authors state that
\begin{quote}
  ``...there has been no general theory able to directly determine for any domain and any type of inhomogeneity the precise relationship between the geometry of the domain, the nature of the disorder, and the localization of vibrations, to predict in which subregions one can expect localized standing waves to appear, and in which frequency range.''
\end{quote}
Although progress has been made by these authors and others
(cf.~\cite{Altmann2019,Altmann2020,Jones2019,Steinerberger2017,Lu2018})
in the meantime, there is still room for improvement, particularly on
the algorithmic front.  We describe the computational
approaches of~\cite{Arnold2019a} and~\cite{Altmann2019,Altmann2020},
which are presented for Schr\"odinger operators $\cL=-\Delta+V$,
before giving a summary description of our own.

Central to the work in~\cite{Arnold2016,Arnold2019,Arnold2019a,Filoche2012}
is the so-called \textit{landscape function} $u$ for $\cL$.
The following basic result, which is stated in each of these works, and
can be proved by applying the Maximum
Principle to $w_{\pm}=\lambda \|\psi\|_{L^{\infty}(\Omega)}u\pm \psi$,
provides pointwise bounds on an eigenvector in terms of its eigenvalue
and the landscape function. 
\begin{proposition}\label{LandscapeProposition}  
Let $(\lambda,\psi)$ be an eigenpair of~\eqref{ModelProblem}.  It holds that 
\begin{align}\label{LandscapeBound}
|\psi(x)|\leq \lambda\, u(x) \|\psi\|_{L^\infty(\Omega)}
\end{align}
pointwise  in  $\Omega$, where $u\in\Dom(\cL)$,  satisfies $\cL u=1$ in $\Omega$.
\end{proposition}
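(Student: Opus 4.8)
\medskip
\noindent\emph{Proof proposal.}
The plan is to follow the strategy indicated after the statement and deduce the bound from the maximum principle for $\cL$. First I would settle the preliminaries that make the claim meaningful. Since $\inf\Spec(\cL)=\lambda_1>\inf_\Omega V\ge 0$, the operator $\cL$ is positive definite and hence boundedly invertible on $L^2(\Omega)$; as $\Omega$ is bounded we have $1\in L^2(\Omega)$, so the landscape function $u=\cL^{-1}1$ exists, is unique, and lies in $\Dom(\cL)$. Under the stated hypotheses on $A$ and $V$, interior elliptic regularity (De Giorgi--Nash--Moser when $d=2$, Schauder-type estimates when $d>2$) gives $u,\psi\in C(\Omega)\cap L^\infty(\Omega)$, so $\|\psi\|_{L^\infty(\Omega)}<\infty$ and the pointwise inequality \eqref{LandscapeBound} is well posed; moreover every eigenvalue satisfies $\lambda\ge\lambda_1>0$.

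Next, writing $m=\|\psi\|_{L^\infty(\Omega)}$, I would set $w_\pm=\lambda m\,u\pm\psi$ and compute, by linearity of $\cL$ and using $\cL u=1$, $\cL\psi=\lambda\psi$, that $\cL w_\pm=\lambda m\pm\lambda\psi=\lambda(m\pm\psi)$. Since $\lambda>0$ and $|\psi|\le m$ a.e.\ in $\Omega$, this gives $\cL w_\pm\ge 0$. Moreover $u,\psi\in H^1_0(\Omega)$, hence $w_\pm\in H^1_0(\Omega)$ and in particular the negative parts $w_\pm^-$ belong to $H^1_0(\Omega)$, so they are admissible test functions. Since $w_\pm\in\Dom(\cL)$ we have the identity $a(w_\pm,\phi)=\int_\Omega(\cL w_\pm)\phi$ for all $\phi\in H^1_0(\Omega)$, where $a(v,\phi)=\int_\Omega A\nabla v\cdot\nabla\phi+\int_\Omega V v\phi$; testing against $\phi=w_\pm^-\ge 0$ and using $w_\pm=w_\pm^+-w_\pm^-$ with $w_\pm^+w_\pm^-=0$ and $\nabla w_\pm^+\cdot\nabla w_\pm^-=0$ a.e.\ yields $-a(w_\pm^-,w_\pm^-)=\int_\Omega(\cL w_\pm)w_\pm^-\ge 0$. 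Because $V\ge 0$ and $A$ is uniformly elliptic, $a$ is coercive on $H^1_0(\Omega)$ (ellipticity plus the Poincar\'e inequality), so $a(w_\pm^-,w_\pm^-)\le 0$ forces $w_\pm^-=0$, i.e.\ $w_\pm\ge 0$ a.e., hence everywhere by continuity. Unwinding, $\lambda m\,u(x)\pm\psi(x)\ge 0$ for every $x\in\Omega$, that is $|\psi(x)|\le\lambda\,u(x)\|\psi\|_{L^\infty(\Omega)}$, which is \eqref{LandscapeBound}. The same argument applied to $w=u$ (with $\cL u=1\ge 0$) gives $u\ge 0$, so the right-hand side of \eqref{LandscapeBound} is genuinely nonnegative.

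I expect the only genuinely delicate point to be the justification of the maximum principle at this level of generality: $\cL$ is only a form-defined selfadjoint operator with bounded (not smooth) lower-order coefficients, so the comparison should be run through the bilinear form $a$ as above rather than by invoking a classical pointwise maximum principle, and one must be careful that $\cL w_\pm\ge 0$ is used in precisely the weak sense that is needed when testing against $w_\pm^-$. The step from ``a.e.'' to ``pointwise'' is where the distinction between the $d=2$ and $d>2$ hypotheses on $A$ enters, but this is standard elliptic regularity theory. Everything else --- the algebra of $\cL w_\pm$, the positivity of $\lambda$, and the homogeneous boundary conditions inherited by $w_\pm$ from $u$ and $\psi$ --- is immediate.
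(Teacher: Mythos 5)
Your proof is correct and follows exactly the route the paper indicates (applying the maximum principle to $w_{\pm}=\lambda\|\psi\|_{L^{\infty}(\Omega)}u\pm\psi$); the paper only sketches this in one line, and your weak-formulation argument via testing with $w_{\pm}^{-}$ and coercivity of the form is a sound way to carry it out at the stated level of coefficient regularity.
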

\noindent
An outline of the computational approach from~\cite{Arnold2019a} is:
\begin{enumerate}
\item Compute the landscape function $u$.
\item Determine several or all local minima of the associated
  \textit{effective potential} $W=1/u$, $W_k=W(x_k)$ for $\,1\leq
  k\leq N$, with $W_k\leq W_{k+1}$.
\item Estimate $N$ eigenvalues as $\tilde\lambda_k=(1+d/4)W_k$.  The
  factor $(1+d/4)$ is supported empirically and heuristically.
\item Choose the set $R_k$ to be the connected component of
  $\{x\in\Omega:\, W(x)\leq E\}$ that contains $x_k$, where $E\geq\tilde\lambda_k$ is a parameter to be set by the user.
  It is expected that $\delta(\psi,R_k)$ is small, where $\psi$ is an
  eigenvector associated with the eigenvalue of $\cL$ estimated by $\tilde\lambda_k$.
\end{enumerate}
We highlight a few features of this approach.  The most obvious is
that it does not involve the (approximate) solution of any eigenvalue
problems; only the solution of a single source problem is required.
From this, estimates (as opposed to convergent approximations) of
several eigenvalues are computed, together with localization regions
for eigenvectors whose eigenvalues are near the given estimates.
\textit{Eigenvector approximations are not provided}, though the
authors mention solving for the smallest eigenpair of $\cL$ with
homogeneous Dirichlet conditions on $R_k$, or some slightly larger
region, as an option.  We note that $\tilde\lambda_k$,
$1\leq k\leq N$, are not necessarily estimates of the first $N$
eigenvalues of $\cL$.  For example, there may be multiple eigenvectors
that are localized in one or more of the regions $R_1,\ldots,R_k$
whose eigenvalues are smaller than the smallest eigenvalue having an
eigenvector localized in $R_{k+1}$.  We emphasize that the method
of~\cite{Arnold2019a} estimates only the smallest eigenvalue having an
eigenvector localized in a each of its determined subdomains $R$; we
will refer to this eigenvalue (or eigenpair) as the \textit{ground
  state} for $R$.

In contrast, the approach of~\cite{Altmann2019,Altmann2020}, aims to
compute localized eigenvectors (and their eigenvalues), together with their localization
regions, by a two-phase iterative process.
Given a highly disordered, but structured, potential $V$, a fine mesh
$\cT^\varepsilon$ is generated that is deemed suitable for resolving
the lowermost part of the spectrum of $\cL$ via a finite element
method---the user determines the number $N$ of eigenpairs sought.  A (much)
coarser mesh $\cT^H$, of which $\cT^\varepsilon$ is a refinement, is
used in the first phase of the algorithm, whose aim is to provide
regions of localization, together with a basis for a rough
approximation of the space in which approximate eigenpairs will be
computed during the second phase.
Starting with finite element hat functions associated with the
vertices of $\cT^H$ (there should be at least $3N$ of them), a few approximate inverse iterations, using one
preconditioned conjugate gradient (PCG) step per iteration, are performed
during phase one,
with a mechanism involving Rayleigh quotients and a parameter
$\eta\in(0,1)$ used pair down the set of functions after each
iteration.  At the end of phase one, a basis for a coarse subspace of 
dimension at least $3N$ is obtained. Since $\cT^\varepsilon$ is a
refinement of $\cT^H$ the functions obtained in phase one are
already finite element functions (of the same degree) on $\cT^\varepsilon$.
In phase two, a few steps of approximate inverse iteration are again
used on the finer discretization, starting with the functions obtained
from phase one; this time, however, three PCG steps are used per
iteration.  After each inverse iteration, approximate
eigenpairs are obtain by a Rayleigh-Ritz procedure on the remaining
set of functions, and a similar mechanism is used to pair down the set
of functions (if needed) for the next iteration.  At the end of phase
two, a set of at least $2N$ approximate eigenpairs is obtained, and
the first $N$ of them are kept.  Variations on this basic algorithm
are presented in~\cite{Altmann2019}, and the description above comes
from Algorithm 1 in that paper.  This algorithm is clearly
more sophisticated and costly than that described above from~\cite{Arnold2019a}, but
it does provide approximations of eigenpairs, not just eigenvalues,
and the quality of these approximations can be controlled by parameters
in the discretization.  Additionally, this algorithm can find more than
one eigenvector that is localized in a given region, which was not the
case for the approach of~\cite{Arnold2019a}.  However, the authors
note that their algorithm assumes that the first $N$ eigenvectors are
localized, which is a reasonable assumption for the highly disordered
potentials they consider, but might be problematic for problems in
which other factors, such as domain geometry, are dominant drivers of
localization.
The approach of~\cite{Arnold2019a} does not assume that the first $N$
eigenfunctions are localized, but is currently limited to computing
estimates of as many eigenvalues and localization subdomains as there
are local minima of $W=1/u$.

Both approaches discussed above are aimed at the lower part of the
spectrum, and are best suited to situations in which this part of the
spectrum contains many localized eigenvectors.  Additionally, these
approaches offer no a priori control of how strongly localized an
eigenvector $\psi$ should be in a localization subdomain $R$
determined by their algorithms, i.e. how small $\delta(\psi,R)$ should
be, in order to consider it ``localized enough''.  Our approach puts
this consideration at the forefront, and targets with the following fundamental task:
\begin{equation}
  \tag{T}\label{KeyTask}
  \parbox{\dimexpr\linewidth-4em}{%
    \strut
    Given a subdomain $R$, an interval $[a,b]$ and a (small) tolerance $\delta^*>0$, find all
eigenpairs $(\lambda,\psi)$ for which $\delta(\psi,R)\leq \delta^*$ and $\lambda\in[a,b]$, or
determine that there are not any.%
    \strut
  }
\end{equation}
One might obtain reasonable candidates for such an $R$ using a
landscape function approach, as in~\cite{Arnold2019a}, or the first
phase of Algorithm 1 in~\cite{Altmann2019}, but for our purposes we
will just assume an $R$ is given. 
If $[a,b]$ contains relatively few eigenvalues of $\cL$ (counted by
multiplicity), this task can be accomplished reasonably efficiently
using existing technology: just compute (approximate) all eigenpairs
$(\lambda,\psi)$ for $\lambda\in[a,b]$ by your favorite method, and check $\delta(\psi,R)$ for
each.  However, if $[a,b]$ contains many eigenvalues of $\cL$, or we
have no a priori sense of how many eigenvalues it contains, an
approach that automatically filters out eigenvectors that are not
localized in $R$ is desirable.  Drawing inspiration from the work of
Marletta~\cite{Marletta2009,Marletta2012} on combating the effects of
spectral pollution in computing eigenvalues for operators having
essential spectrum, we also consider a complex-shifted version of the
operator (though our operator $\cL$ has no essential spectrum).
More specifically, given a subdomain $R$ and a number $s>0$,  we
define the normal operator $\cL_s$ by
\begin{align}\label{Ls}
\cL_s=\cL+\ii s\,\chi_R\quad,\quad \Dom(\cL_s)=\Dom(\cL)~.
\end{align}
The intuition behind our approach is that, if $(\lambda,\psi)$ is an
eigenpair of $\cL$ with $\psi$ highly localized in $R$, then their
ought to be an eigenpair $(\mu,\phi)$ of $\cL_s$ with $\mu$ near
$\lambda+\ii\,s$ and $\phi$ near $\psi$.  This intuition will be
justified theoretically in Section~\ref{Theory}.  With this in mind,
our algorithm template, Algorithm~\ref{ELAT}, begins by finding 
eigenpairs $(\mu,\phi)$ of $\cL_s$ for which
$\Im \mu$ is near $s$.  These eigenpairs of $\cL_s$ are then
``post-processed'' to find eigenpairs $(\lambda,\psi)$ of $\cL$
for which $\psi$ is likely to be localized in $R$.
The parameter $\delta^*$ governs both whether $\Im(\mu)$ is ``near enough'' to
$s$,
and whether $\delta(\psi,R)$ is ``small enough''.

The rest of the paper is outlined as follows.  In
Section~\ref{Theory}, we establish the key theoretical results that
naturally lead to an algorithm template for~\eqref{KeyTask}, and
illustrate several of the results and ideas of this section via two 1D examples for which
discretization is not needed for computing eigenpairs of $\cL$ and
$\cL_s$.  The algorithmic template itself is given in
Section~\ref{Template}, together with a description of one reasonable
choice for computing eigenpairs of $\cL_s$ that lie in a target
region---the FEAST method.  Section~\ref{Experiments} contains several
experiments illustrating the performance a practical realization of
the algorithm using finite element discretizations to approximate
eigenpairs.  Is Section~\ref{Conclusions} we offer a few concluding remarks.

\section{Theoretical Results and Illustrations}\label{Theory}
Given a proper subdomain $R$ and an $s>0$, we now explain and justify what we mean
by asserting that,
\begin{quote}
If $(\lambda,\psi)$ is an eigenpair of $\cL$ with
$\psi$ highly localized in $R$, then there is an eigenpair
$(\mu,\phi)$ of $\cL_s$, defined in~\eqref{Ls}, that is close to the eigenpair
$(\lambda+\ii\,s,\psi)$ of $\cL+\ii\,s$.
\end{quote}
A natural analogue of this assertion, with the roles of $\cL$ and $\cL_s$ reversed, also holds,
and both will be considered below, after first establishing some
simple bounds on the real and imaginary parts of eigenvalues of $\cL_s$.
Suppose that $(\mu,\phi)$ is an eigenpair of $\cL_s$. 
We have 
\begin{align}\label{RayleighQuotient}
  \mu=\frac{(\cL \phi,\phi)}{\|\phi\|_{L^2(\Omega)}^2}+\ii\,s[\tau(\phi,R)]^2~,
\end{align}
where $(\cdot,\cdot)$ is the (complex) inner-product on $L^2(\Omega)$.
It follows by the unique continuation principle and the variational
characterization of the eigenvalues of $\cL$ that
\begin{proposition}\label{BoundsOnImaginaryPart}
For any $\mu\in\Spec(\cL_s)$, $0<\Im \mu<s$ and $\Re\mu>\lambda_1(\cL)=\min\Spec(\cL)$.
\end{proposition}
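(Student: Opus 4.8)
The plan is to extract both two-sided bounds directly from the Rayleigh-quotient identity~\eqref{RayleighQuotient} for the eigenpair $(\mu,\phi)$ of $\cL_s$, and then to upgrade each endpoint inequality to a strict one by invoking the unique continuation property. Since $\cL$ is selfadjoint and $\phi\in\Dom(\cL_s)=\Dom(\cL)$, the number $(\cL\phi,\phi)$ is real and, by the variational characterization of $\lambda_1(\cL)$, satisfies $(\cL\phi,\phi)\geq\lambda_1(\cL)\,\|\phi\|_{L^2(\Omega)}^2$; combined with $0\leq[\tau(\phi,R)]^2\leq 1$, identity~\eqref{RayleighQuotient} gives at once $\Re\mu\geq\lambda_1(\cL)$ and $0\leq\Im\mu\leq s$. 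In each of the three equality cases the mechanism is the same: equality would force $\phi$ to be an eigenvector of $\cL$ itself that vanishes on too large a portion of $\Omega$, which unique continuation forbids.

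First I would rule out $\Im\mu=0$: this means $\tau(\phi,R)=0$, i.e.\ $\phi=0$ a.e.\ on the nonempty open set $R$, so $\chi_R\phi=0$ and the eigenvalue equation $\cL_s\phi=\mu\phi$ reduces to $\cL\phi=\mu\phi$. Thus $\phi$ is an eigenvector of $\cL$ vanishing on an open subset of $\Omega$, which is impossible; hence $\Im\mu>0$. Next, $\Im\mu=s$ would mean $\tau(\phi,R)=1$, equivalently $\delta(\phi,R)=0$, i.e.\ $\phi=0$ a.e.\ on $\Omega\setminus R$; then $\chi_R\phi=\phi$, the equation becomes $\cL\phi=(\mu-\ii s)\phi$ with $\mu-\ii s\in\RR$, and $\phi$ is again an eigenvector of $\cL$, now vanishing on $\Omega\setminus R$. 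Since $R$ is a \emph{proper} subdomain, $\Omega\setminus R$ is nonempty, and (assuming, as one should, that $\Omega\setminus R$ has nonempty interior, hence positive measure) this again contradicts unique continuation, giving $\Im\mu<s$.

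Finally, to obtain $\Re\mu>\lambda_1(\cL)$, suppose $\Re\mu=\lambda_1(\cL)$. By~\eqref{RayleighQuotient}, $\phi$ then minimizes the Rayleigh quotient $v\mapsto(\cL v,v)/\|v\|_{L^2(\Omega)}^2$, so $\phi$ is an eigenvector of $\cL$ for its lowest eigenvalue, $\cL\phi=\lambda_1(\cL)\phi$. Subtracting this from $\cL_s\phi=\mu\phi$ yields $\ii s\,\chi_R\phi=(\mu-\lambda_1(\cL))\phi=\ii(\Im\mu)\phi$, hence $(s\chi_R-\Im\mu)\phi=0$ a.e.\ in $\Omega$; restricting to $\Omega\setminus R$ gives $(\Im\mu)\phi=0$ there, and since $\Im\mu>0$ from the previous paragraph, $\phi=0$ a.e.\ on $\Omega\setminus R$. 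But $\Im\mu<s$ means $\tau(\phi,R)<1$, i.e.\ $\|\phi\|_{L^2(\Omega\setminus R)}>0$ --- a contradiction. Therefore $\Re\mu>\lambda_1(\cL)$.

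I expect the only genuinely delicate point to be the strict inequality $\Im\mu<s$: passing from ``$\phi=0$ a.e.\ on $\Omega\setminus R$'' to a contradiction with unique continuation is immediate when $\Omega\setminus\overline{R}\neq\emptyset$ (so that $\phi$ vanishes on a nonempty open set), but in the degenerate situation where $R$ is dense in $\Omega$ one needs the stronger form of unique continuation, which forbids an eigenvector from vanishing on a set of positive measure. Some such nondegeneracy hypothesis on $R$ is in fact necessary: if $\Omega\setminus R$ had measure zero then $\cL_s=\cL+\ii s$ and the statement would fail. Everything else is routine manipulation of~\eqref{RayleighQuotient} and the eigenvalue equation.
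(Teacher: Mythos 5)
Your proof is correct and follows the same overall strategy as the paper's: the non-strict bounds come from the Rayleigh-quotient identity \eqref{RayleighQuotient} together with the variational principle, and the endpoint cases $\Im\mu=0$ and $\Im\mu=s$ are excluded by unique continuation exactly as in the paper (which works with the real and imaginary parts $\phi_1,\phi_2$ separately, but the substance is identical). The one place you genuinely diverge is the strict inequality $\Re\mu>\lambda_1(\cL)$: the paper invokes the simplicity of $\lambda_1$ to conclude $\phi=c\psi_1$ and then appeals to unique continuation a second time, whereas you use only that a minimizer of the Rayleigh quotient lies in $E(\lambda_1,\cL)$ (no simplicity required) and close the argument by contradicting the already-established bounds $0<\Im\mu<s$, with no further use of unique continuation; your version is slightly more economical and would survive verbatim if the lowest eigenvalue were not simple. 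Your caveat that $\Omega\setminus R$ must have nonempty interior (or that one must invoke the measure-theoretic form of unique continuation) for the $\Im\mu=s$ case is well taken, but note that it applies equally to the paper's own argument, which likewise passes from ``$\psi_1$ supported in $R$'' to $\psi_1\equiv 0$; in all of the paper's examples $\Omega\setminus R$ contains an open set, so the issue is an implicit standing assumption rather than a gap.
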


We now present the first of two key results concerning the eigenpairs of $\cL$,
$\cL_s$ and $\cL+\ii\,s$. 
\begin{theorem}\label{KeyTheorem}
Let $(\lambda,\psi)$ be an eigenpair of $\cL$.  Then 
\begin{align}\label{EigenvalueCloseness}
\dist(\lambda+\ii\,s,\Spec(\cL_s))\leq s\, \delta(\psi,R)~.
\end{align}
Let $\mu=\argmin\{|\lambda+\ii\,s-\sigma|:\,\sigma\in\Spec(\cL_s)\}$.
If $M\subset\Spec(\cL_s)$ contains $\mu$, then
\begin{align}\label{EigenvectorCloseness}
\inf_{v\in
  E(M,\cL_s)}\frac{\|\psi-v\|_{L^2(\Omega)}}{\|\psi\|_{L^2(\Omega)}}\leq
  \frac{s\,\delta(\psi,R)}{\dist(\lambda+\ii\,s,\Spec(\cL_s)\setminus M)}~,
\end{align}
where $E(M,\cL_s)=\bigoplus\{E(\sigma,\cL_s):\,\sigma\in M\}$ is the
invariant subspace for $M$.
\end{theorem}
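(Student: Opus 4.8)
The plan is to reduce the whole statement to one elementary identity. Since $\cL\psi=\lambda\psi$ and $\cL_s=\cL+\ii\,s\,\chi_R$, we get
\[
(\cL_s-(\lambda+\ii\,s))\psi=\ii\,s(\chi_R-1)\psi=-\ii\,s\,\chi_{\Omega\setminus R}\psi,
\]
so the residual $r\doteq(\cL_s-(\lambda+\ii\,s))\psi$ has $\|r\|_{L^2(\Omega)}=s\,\|\psi\|_{L^2(\Omega\setminus R)}=s\,\delta(\psi,R)\,\|\psi\|_{L^2(\Omega)}$. Everything else is a matter of turning this residual bound into information about $\Spec(\cL_s)$ and the invariant subspaces of $\cL_s$. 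For that I would use two standard consequences of the spectral theorem applied to the normal operator $\cL_s$, which has compact resolvent because it is a bounded perturbation of $\cL$: (i) for $z\notin\Spec(\cL_s)$ one has $\|(\cL_s-z)^{-1}\|=1/\dist(z,\Spec(\cL_s))$; and (ii) $L^2(\Omega)$ decomposes orthogonally into the eigenspaces $E(\sigma,\cL_s)$, $\sigma\in\Spec(\cL_s)$, with associated orthogonal spectral projections that commute with $\cL_s$ and preserve its domain.

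For \eqref{EigenvalueCloseness}: if $\lambda+\ii\,s\in\Spec(\cL_s)$ the inequality is trivial; otherwise I would apply $(\cL_s-(\lambda+\ii\,s))^{-1}$ to $r$ and invoke (i), obtaining
\[
\|\psi\|_{L^2(\Omega)}=\bigl\|(\cL_s-(\lambda+\ii\,s))^{-1}r\bigr\|_{L^2(\Omega)}\le\frac{\|r\|_{L^2(\Omega)}}{\dist(\lambda+\ii\,s,\Spec(\cL_s))}=\frac{s\,\delta(\psi,R)}{\dist(\lambda+\ii\,s,\Spec(\cL_s))}\,\|\psi\|_{L^2(\Omega)},
\]
which rearranges to the claimed bound.

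For \eqref{EigenvectorCloseness}: let $P$ be the orthogonal spectral projection of $\cL_s$ onto (the closure of) $E(M,\cL_s)$, so that $\inf_{v\in E(M,\cL_s)}\|\psi-v\|_{L^2(\Omega)}=\|(I-P)\psi\|_{L^2(\Omega)}$ and it suffices to bound $\|(I-P)\psi\|_{L^2(\Omega)}$. On $\mathrm{ran}(I-P)$ the operator $\cL_s$ is again normal, now with spectrum $\Spec(\cL_s)\setminus M$, so $\cL_s-(\lambda+\ii\,s)$ is boundedly invertible there with inverse of norm $1/\dist(\lambda+\ii\,s,\Spec(\cL_s)\setminus M)$; this distance is positive since the closest spectral point $\mu$ lies in $M$ and $\Spec(\cL_s)$ is discrete. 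Because $P$ commutes with $\cL_s$, preserves its domain, and satisfies $\|I-P\|\le 1$, we have
\[
(\cL_s-(\lambda+\ii\,s))(I-P)\psi=(I-P)r,\qquad \|(I-P)r\|_{L^2(\Omega)}\le\|r\|_{L^2(\Omega)}=s\,\delta(\psi,R)\,\|\psi\|_{L^2(\Omega)}.
\]
Applying the restricted inverse to the left identity and dividing by $\|\psi\|_{L^2(\Omega)}$ gives the bound.

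The displayed manipulations are routine; the real content --- and the step I expect to need the most care --- is the functional-analytic input (i)--(ii), namely that $\cL_s$ behaves spectrally like a selfadjoint operator. This is precisely where normality of $\cL_s$ and compactness of its resolvent are essential: without normality, $\|(\cL_s-z)^{-1}\|$ may far exceed $1/\dist(z,\Spec(\cL_s))$ and spectral projections need not have norm $1$, which would degrade or destroy both estimates. I would therefore expect the proof to spend its effort establishing this framework for $\cL_s$ (including handling $\chi_R$ correctly as a bounded multiplication operator and confirming $\Dom(\cL_s)=\Dom(\cL)$), after which \eqref{EigenvalueCloseness} and \eqref{EigenvectorCloseness} follow as above.
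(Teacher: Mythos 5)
Your proposal is correct and follows essentially the same route as the paper: both hinge on the residual identity $(\lambda+\ii\,s-\cL_s)\psi=\ii\,s\,\chi_{\Omega\setminus R}\psi$, the normal-operator resolvent identity $\|(z-\cL_s)^{-1}\|^{-1}=\dist(z,\Spec(\cL_s))$, and inversion of $\lambda+\ii\,s-\cL_s$ on the complementary invariant subspace $\mathrm{ran}(I-P)$ after commuting the orthogonal spectral projector through. The only cosmetic difference is that the paper dispenses with your "trivial case" by invoking Proposition~\ref{BoundsOnImaginaryPart} to conclude $\lambda+\ii\,s\notin\Spec(\cL_s)$ outright, and writes your restricted-inverse step as the operator identity~\eqref{ComplementaryProjectorIdentity}.
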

\begin{proof}
Since $\lambda+\ii\,s\notin \Spec(\cL_s)$, $(\lambda+\ii\,s-\cL_s)^{-1}$ is bounded and normal, and we have (cf.\cite[Chapter 5, Section 3.5]{Kato1995})
\begin{align*}
\|(\lambda+\ii\,s-\cL_s)^{-1}\|^{-1}=\dist(\lambda+\ii\,s,\Spec(\cL_s))~.
\end{align*}
We see by direct computation that
$(\lambda+\ii\,s-\cL_s)\psi=\ii\,s\,\chi_{\Omega\setminus R}\psi$,
from which we obtain
\begin{align*}
  \|\psi\|_{L^2(\Omega)}\leq \|(\lambda+\ii\,s-\cL_s)^{-1}\|\,s\|\psi\|_{L^2(\Omega\setminus R)}~.
\end{align*}
Rearranging terms, and using the definition of $\delta(\psi,R)$, 
completes the proof of~\eqref{EigenvalueCloseness}.   

Now let $P_s=\frac{1}{2\pi\ii}\int_\gamma(z-\cL_s)^{-1}\,dz$ denote
the (orthogonal) spectral projector for $E(M,\cL_s)$.  Here, $\gamma\subset\CC$ is
any simple closed contour enclosing $M$ and excluding
$\Spec(\cL_s)\setminus M$. 
Noting that $P_s$ commutes with $\cL_s$, direct algebraic manipulation
reveals that
\begin{align}\label{ComplementaryProjectorIdentity}
I-P_s=(\lambda+\ii\,s-\cL_s (I-P_s))^{-1}(I-P_s)(\lambda+\ii\,s-\cL_s)~.
\end{align}
It follows that
\begin{align*}
(I-P_s)\psi=(\lambda+\ii\,s-\cL_s (I-P_s))^{-1}(I-P_s)\,\ii\,s \chi_{\Omega\setminus R}\psi~,
\end{align*}
Therefore,
\begin{align*}
\|(I-P_s)\psi\|_{L^2(\Omega)}\leq \frac{s\|\psi\|_{L^2(\Omega\setminus
  R)}}{\dist(\lambda+\ii\,s,\Spec(\cL_s)\setminus M)}~,
\end{align*}
which establishes~\eqref{EigenvectorCloseness}.  We note that it was not necessary that $M$ contain
$\mu$ for~\eqref{EigenvectorCloseness} to hold, but that including
$\mu$ in $M$ makes the demoninator larger.  Of course, for~\eqref{EigenvectorCloseness}
to be a meaningful bound, the term on the right must be less than $1$,
so including (at least) $\mu$ in $M$ is prudent.
\end{proof}

A simple consequence of Proposition~\ref{BoundsOnImaginaryPart} and
Theorem~\ref{KeyTheorem}, put in the context of our key task~\eqref{KeyTask},
is that
\begin{corollary}\label{SearchRegionCor}
  If $(\lambda,\psi)$ is an eigenpair of $\cL$ with $\lambda\in[a,b]$
  and $\delta(\psi,R)\leq \delta^*$, then there is an eigenpair
  $(\mu,\phi)$ of $\cL_s$ in the region
  $U=U(a,b,s,\delta^*)=\{z\in\CC:\,\dist(z,L)\leq s\delta^*\,,\,\Im
  z<s\}$ pictured in Figure~\ref{SearchRegionFig}, where $L=[a,b]+\ii\,s$.
\end{corollary}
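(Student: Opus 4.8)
The plan is simply to chain together equation~\eqref{EigenvalueCloseness} of Theorem~\ref{KeyTheorem} with the a priori bound of Proposition~\ref{BoundsOnImaginaryPart}. Let $(\lambda,\psi)$ be as in the hypothesis, and let $\mu=\argmin\{|\lambda+\ii\,s-\sigma|:\,\sigma\in\Spec(\cL_s)\}$, exactly as in Theorem~\ref{KeyTheorem}; since $\Spec(\cL_s)$ is discrete with no finite accumulation point, this infimum is attained and $\mu$ is a genuine eigenvalue, so we may pick a corresponding eigenvector $\phi$. By~\eqref{EigenvalueCloseness} together with the hypothesis $\delta(\psi,R)\leq\delta^*$, we get $|\lambda+\ii\,s-\mu|=\dist(\lambda+\ii\,s,\Spec(\cL_s))\leq s\,\delta(\psi,R)\leq s\,\delta^*$. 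Because $\lambda\in[a,b]$, the point $\lambda+\ii\,s$ lies on $L=[a,b]+\ii\,s$, so the triangle inequality yields $\dist(\mu,L)\leq|\mu-(\lambda+\ii\,s)|\leq s\,\delta^*$, which is the first condition defining $U$. The second condition, $\Im\mu<s$, is immediate from Proposition~\ref{BoundsOnImaginaryPart}. Hence $(\mu,\phi)$ is an eigenpair of $\cL_s$ with $\mu\in U$, as claimed.

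I do not expect any real difficulty: the corollary is a bookkeeping consequence of results already in hand, and the only point worth a sentence is the remark just made that the infimum in $\dist(\lambda+\ii\,s,\Spec(\cL_s))$ is actually realized, so that ``the closest spectral point'' makes sense as an eigenvalue rather than merely a limit point. If one wanted to say slightly more, one could also record the companion bound~\eqref{EigenvectorCloseness} specialized to $M=\{\mu\}$, which shows that the corresponding $\phi$ can be taken close to $\psi$ whenever $\mu$ is well separated from the remainder of $\Spec(\cL_s)$; but this refinement is not needed for the statement of the corollary itself, and is better left for the discussion of the algorithm template.
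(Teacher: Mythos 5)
Your argument is exactly the intended one: the paper presents the corollary as an immediate consequence of Theorem~\ref{KeyTheorem} and Proposition~\ref{BoundsOnImaginaryPart} and omits a formal proof, and your chaining of~\eqref{EigenvalueCloseness} with the bound $\Im\mu<s$ is precisely that. The added observation that the distance to the discrete set $\Spec(\cL_s)$ is attained is a worthwhile sentence of bookkeeping and does not deviate from the paper's reasoning.
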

\begin{figure}
  \centering
  \includegraphics[width=3.0in]{./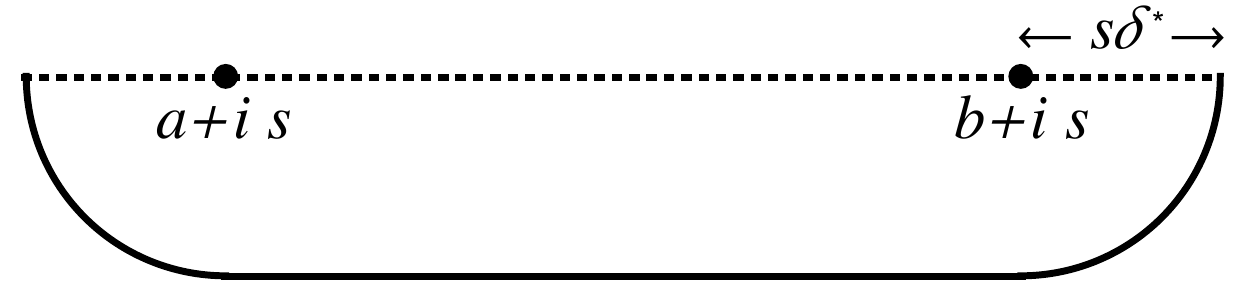}
  \caption{\label{SearchRegionFig} The region $U=U(a,b,s,\delta^*)$
    from Corollary~\ref{SearchRegionCor}.}
\end{figure}

We also have the complementary result to Theorem~\ref{KeyTheorem}, by
similar reasoning.
\begin{theorem}\label{KeyTheorem2}
Let $(\mu,\phi)$ be an eigenpair of $\cL_s$.  Then 
\begin{align}\label{EigenvalueCloseness2}
s\, [\delta(\phi,R)]^2\leq \dist(\mu,\Spec(\cL+\ii\,s))\leq s\,
  \delta(\phi,R)~,\\
  \label{EigenvalueCloseness3}
  \dist(\Re\mu,\Spec(\cL))\leq s\, \delta(\phi,R)\, \tau(\phi,R)~.
\end{align}
Let $\lambda=\argmin\{|\sigma-\Re\mu|:\,\sigma\in\Spec(\cL)\}$.
If $\Lambda\subset\Spec(\cL)$ contains $\lambda$, then
\begin{align}\label{EigenvectorCloseness2}
\inf_{v\in
  E(\Lambda,\cL)}\frac{\|\phi-v\|_{L^2(\Omega)}}{\|\phi\|_{L^2(\Omega)}}\leq
  \frac{s\,\delta(\phi,R)\,\tau(\phi,R)}{\dist(\Re\mu,\Spec(\cL)\setminus(\Lambda\cup\{\Re\mu\}))}~,
\end{align}
where $E(\Lambda,\cL)$ is the corresponding invariant subspace.
\end{theorem}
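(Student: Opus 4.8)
The plan is to run the argument of Theorem~\ref{KeyTheorem} with the roles of $\cL$ and $\cL_s$ interchanged, supplying two extra ingredients: the identity~\eqref{RayleighQuotient}, which fixes $\Im\mu$, and a short computation identifying the residual $(\Re\mu-\cL)\phi$. Normalize $\|\phi\|_{L^2(\Omega)}=1$ and abbreviate $\delta=\delta(\phi,R)$, $\tau=\tau(\phi,R)$, so that $\delta^2+\tau^2=1$; by Proposition~\ref{BoundsOnImaginaryPart} together with~\eqref{RayleighQuotient} we have $\delta,\tau\in(0,1)$ and $\Im\mu=s\tau^2$.

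For~\eqref{EigenvalueCloseness2}, note first that $\cL+\ii s$ is normal with $\Spec(\cL+\ii s)=\Spec(\cL)+\ii s\subset\{z:\Im z=s\}$, so $\dist(\mu,\Spec(\cL+\ii s))\geq|\Im\mu-s|=s\delta^2$, which is the lower bound. For the upper bound I would compute directly from $\cL_s\phi=\mu\phi$ that $(\mu-(\cL+\ii s))\phi=-\ii s\,\chi_{\Omega\setminus R}\phi$; since $\Im\mu=s\tau^2<s$ we have $\mu\notin\Spec(\cL+\ii s)$, so the resolvent is bounded and normal with norm $1/\dist(\mu,\Spec(\cL+\ii s))$, and rearranging $\phi=(\mu-(\cL+\ii s))^{-1}(-\ii s\,\chi_{\Omega\setminus R}\phi)$ gives $\dist(\mu,\Spec(\cL+\ii s))\leq s\|\chi_{\Omega\setminus R}\phi\|_{L^2(\Omega)}=s\delta$, exactly as in the proof of~\eqref{EigenvalueCloseness}. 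Then~\eqref{EigenvalueCloseness3} is elementary plane geometry: choosing $\lambda_\star\in\Spec(\cL)$ with $\lambda_\star+\ii s$ nearest $\mu$, we get $(\Re\mu-\lambda_\star)^2+(\Im\mu-s)^2\leq s^2\delta^2$, and since $(\Im\mu-s)^2=s^2\delta^4$ this forces $(\Re\mu-\lambda_\star)^2\leq s^2\delta^2(1-\delta^2)=s^2\delta^2\tau^2$, whence $\dist(\Re\mu,\Spec(\cL))\leq|\Re\mu-\lambda_\star|\leq s\delta\tau$.

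For~\eqref{EigenvectorCloseness2} I would first record that $\Re\mu\notin\Spec(\cL)$: otherwise~\eqref{EigenvalueCloseness3} would force $\delta\tau=0$, contradicting $\delta,\tau\in(0,1)$. Next, from $\cL\phi=\mu\phi-\ii s\,\chi_R\phi$ and $\Im\mu=s\tau^2$ one computes $(\Re\mu-\cL)\phi=\ii s(\chi_R-\tau^2)\phi=\ii s(\delta^2\chi_R-\tau^2\chi_{\Omega\setminus R})\phi$, so that, using $\|\phi\|_{L^2(R)}^2=\tau^2$ and $\|\phi\|_{L^2(\Omega\setminus R)}^2=\delta^2$, we obtain the clean identity $\|(\Re\mu-\cL)\phi\|_{L^2(\Omega)}=s\delta\tau$. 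Finally, letting $P$ be the orthogonal spectral projector of $\cL$ onto $E(\Lambda,\cL)$ (contour enclosing $\Lambda$, excluding $\Spec(\cL)\setminus\Lambda$), the same algebraic identity as~\eqref{ComplementaryProjectorIdentity} applies because $P$ commutes with $\cL$ and $\Re\mu\notin\Spec(\cL)\setminus\Lambda=\Spec(\cL)\setminus(\Lambda\cup\{\Re\mu\})$; it yields $(I-P)\phi=(\Re\mu-\cL(I-P))^{-1}(I-P)\,\ii s(\chi_R-\tau^2)\phi$, and taking norms, together with $\inf_{v\in E(\Lambda,\cL)}\|\phi-v\|_{L^2(\Omega)}=\|(I-P)\phi\|_{L^2(\Omega)}$, gives~\eqref{EigenvectorCloseness2}; including $\lambda$ in $\Lambda$ serves, as in Theorem~\ref{KeyTheorem}, only to keep the denominator from being ruinously small.

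The resolvent estimates and the plane geometry are routine; the genuinely new content is the pair of identities $(\Re\mu-\cL)\phi=\ii s(\chi_R-\tau^2)\phi$ and $\|(\chi_R-\tau^2)\phi\|_{L^2(\Omega)}=\delta\tau\|\phi\|_{L^2(\Omega)}$, both resting on~\eqref{RayleighQuotient}. The one spot needing care---and the nearest thing to an obstacle---is confirming $\Re\mu\notin\Spec(\cL)$ so that the complementary-projector identity may be invoked with the stated denominator; I expect the short deduction from~\eqref{EigenvalueCloseness3} and Proposition~\ref{BoundsOnImaginaryPart} to dispose of it.
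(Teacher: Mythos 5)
Your argument is correct and, for the eigenvalue upper bound in~\eqref{EigenvalueCloseness2}, the lower bound, and the eigenvector bound~\eqref{EigenvectorCloseness2}, it follows the paper's proof essentially verbatim: residual identity, normal resolvent norm equal to the reciprocal distance to the spectrum, and the complementary-projector identity~\eqref{ComplementaryProjectorIdentity2}. Where you genuinely diverge is~\eqref{EigenvalueCloseness3}: you deduce it by plane geometry from the already-proved upper bound in~\eqref{EigenvalueCloseness2}, using $(\Im\mu-s)^2=s^2\delta^4$ to peel off the imaginary part and conclude $(\Re\mu-\lambda_\star)^2\le s^2\delta^2(1-\delta^2)$. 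The paper instead proves~\eqref{EigenvalueCloseness3} directly from the residual identity $(\cL-\mu_1)\phi=\ii\,s(\tau^2\chi_{\Omega\setminus R}\phi-\delta^2\chi_R\phi)$ and the selfadjoint resolvent bound. Your route is shorter and makes transparent why the product $\delta\tau$ appears (it is the horizontal leg of a right triangle with hypotenuse $s\delta$ and vertical leg $s\delta^2$); the paper's route has the advantage that the same residual computation is reused immediately for~\eqref{EigenvectorCloseness2} and for Proposition~\ref{ResidualBound}. Since you derive that identity anyway for the eigenvector bound, nothing is lost.

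One step is wrong as stated, though harmlessly so. You claim $\Re\mu\notin\Spec(\cL)$ ``because otherwise~\eqref{EigenvalueCloseness3} would force $\delta\tau=0$.'' That reverses the inequality: if $\Re\mu\in\Spec(\cL)$ then $\dist(\Re\mu,\Spec(\cL))=0\le s\delta\tau$ holds trivially and forces nothing. Indeed the paper explicitly entertains the case $\mu_1\in\Spec(\cL)$ (where~\eqref{EigenvalueCloseness3} is trivial) rather than excluding it, and the $\cup\{\Re\mu\}$ in the denominator of~\eqref{EigenvectorCloseness2} exists precisely to cover it. The repair is one line: what you actually need for the projector identity is $\Re\mu\notin\Spec(\cL)\setminus\Lambda$, and this holds in either case --- if $\Re\mu\notin\Spec(\cL)$ it is immediate, while if $\Re\mu\in\Spec(\cL)$ then $\Re\mu=\lambda\in\Lambda$ by the hypothesis that $\Lambda$ contains the nearest eigenvalue. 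With that substitution your proof of~\eqref{EigenvectorCloseness2} goes through unchanged.
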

\begin{proof}
Since $\mu\notin \Spec(\cL+\ii\,s)$, and $(\cL+\ii\,s-\mu)\phi=\ii\,s\,\chi_{\Omega\setminus R}\phi$,
the upper bound in~\eqref{EigenvalueCloseness2} now follows in the same way as
its counterpart~\eqref{EigenvalueCloseness}.
Since 
$\mu=\mu_1+\ii\,s[\tau(\phi,R)]^2$,
$|d+\ii\,s-\mu|=|d-\mu_1+\ii\,s[\delta(\phi,R)]^2|\geq
s[\delta(\phi,R)]^2$ for any $d\in\RR$,
from which the lower bound  follows immediately.

Letting $\delta=\delta(\phi,R)$ and $\tau=\tau(\phi,R)$, one
finds that $(\cL-\mu_1)\phi=\ii\,s(\tau^2\chi_{\Omega\setminus
  R}\phi-\delta^2\chi_R\phi)$ by direct computation.
If $\mu_1\in\Spec(\cL)$, then~\eqref{EigenvalueCloseness3} holds
trivially, so we assume that $\mu_1\not\in\Spec(\cL)$. 
It follows that
\begin{align*}
\|\phi\|_{L^2(\Omega)}^2&\leq\|(\cL -
                          \mu_1)^{-1}\|^{2}\|\ii\,s(\tau^2\chi_{\Omega\setminus
                          R}\phi-\delta^2\chi_R\phi)\|_{L^2(\Omega)}^2\\
  &=\|(\cL - \mu_1)^{-1}\|^{2}s^2(\tau^4\|\phi\|_{L^2(\Omega\setminus R)}^2
    +\delta^4\|\phi\|_{L^2(R)}^2)~.
\end{align*}
Shifting around terms, and recalling that $\delta^2 + \tau^2 = 1$, we obtain gives
\begin{align}
\|(\cL - \mu_1)^{-1}\|^{-2} \leq s^2(\delta^4 \tau^2 + \tau^4 \delta^2) = s^2\delta^2\tau^2~,
\end{align}
which yields~\eqref{EigenvalueCloseness3}.

Now let $P=\frac{1}{2\pi\,\ii}\int_\gamma(z-\cL)^{-1}\,dz$ denote the
(orthogonal) spectral projector for $E(\Lambda,\cL)$, where $\gamma$
is a simple closed contour that encloses $\Lambda\cup\{\mu_1\}$, and
excludes $\Spec(\cL)\setminus\Lambda$.  Noting that $P$ commutes with $\cL$,
the identity
\begin{align}\label{ComplementaryProjectorIdentity2}
I-P=(\cL(I-P)-\mu_1)^{-1}(I-P)(\cL-\mu_1)~,
\end{align}
follows by direct algebraic manipulation.  From
this,~\eqref{EigenvectorCloseness2} is achieved as before.  
\end{proof}

\begin{remark}[Vector Normalization]\label{EigenvectorRotation}
  Given an eigenpair $(\mu,\phi)$ of $\cL_s$, with
  $\|\phi\|_{L^2(\Omega)}=1$, we will further normalize $\phi$ as
  follows:
  \begin{align}\label{VectorNormalization}
    \phi\longleftarrow c\phi \mbox{ where } c = \argmin\{\|\Im(d\phi)\|_{L^2(\Omega)}:\,|d|=1\}~.
  \end{align}
  Our rationale for minimizing the imaginary part in this sense is
  that if some scaling of $\phi$, $c\phi$, is close to a real(!)
  eigenvector $\psi$ of $\cL$, which is the case if the upper-bound
  in~\eqref{EigenvectorCloseness2} is small, then the imaginary part of $c\phi$
  should be small.  Proposition~\ref{ResidualBound} will provide
  further motivation for this kind of normalization.  For a given non-zero
  function $\phi=\phi_1+\ii\phi_2$, not necessarily an eigenvector,
  one can recast the minimization problem
  $\alpha=\min\{\|\Im(d\phi)\|_{L^2(\Omega)}:\,|d|=1\}$ as a
  $2\times 2$ eigenvalue problem, with $\alpha^2$ as the smaller of
  the two (real) eigenvalues.  The corresponding real eigenvector
  $\mb{c}=(c_1,c_2)$, with $c_1^2+c_2^2=1$, is related to the optimal
  scalar $c$ by $c=c_1+\ii\,c_2$.  The matrix for this eigenvalue
  problem is
\begin{align}\label{NormalizationMatrix}
  \begin{pmatrix}\|\phi_2\|_{L^2(\Omega)}^2&\int_\Omega \phi_1\phi_2\,dx\\\int_\Omega \phi_1\phi_2\,dx&\|\phi_1\|_{L^2(\Omega)}^2\end{pmatrix}~.
\end{align}
It can be seen, using the Cauchy-Schwarz inequality, that this matrix
is positive semidefinite, and that $\alpha=0$ iff
$\{\phi_1,\phi_2\}$ is a linearly dependent set.
\end{remark}

\begin{proposition}\label{ResidualBound}
  Let $(\mu,\phi)$ be an eigenpair of $\cL_s$ with $\|\phi\|_{L^2(\Omega)}=1$, and set $\mu_1=\Re\mu$,
  $\phi_1=\Re\phi$, $\phi_2=\Im\phi$, $\tau=\tau(\phi,R)$ and
  $\delta=\delta(\phi,R)$.  It holds that
  \begin{align}\label{ResidualBound2}
    \|(\cL-\mu_1)\phi_1\|_{L^2(\Omega)}^2=s^2(\tau^4
    \|\phi_2\|^2_{L^2(\Omega\setminus R)}+\delta^4 \|\phi_2\|^2_{L^2(R)})~.
  \end{align}
\end{proposition}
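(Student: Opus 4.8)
The plan is a direct computation starting from the eigenvalue equation $\cL_s\phi=\mu\phi$, using only two structural facts: that $\cL$ has real-valued coefficients (so $\cL$ commutes with complex conjugation, whence $\Re(\cL\phi)=\cL\phi_1$ and $\Im(\cL\phi)=\cL\phi_2$), and the identity $\Im\mu = s\,[\tau(\phi,R)]^2$ already recorded in~\eqref{RayleighQuotient}, which here reads simply $\Im\mu = s\tau^2$ since $\|\phi\|_{L^2(\Omega)}=1$. Everything else is bookkeeping with $\chi_R$ and the relation $\delta^2+\tau^2=1$.

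First I would reuse the identity established in the proof of Theorem~\ref{KeyTheorem2}, namely $(\cL-\mu_1)\phi = \ii\,s\,(\tau^2\chi_{\Omega\setminus R}-\delta^2\chi_R)\phi$, which follows at once by writing $\cL\phi = \cL_s\phi - \ii\,s\,\chi_R\phi = (\mu_1+\ii\,s\tau^2)\phi - \ii\,s\,\chi_R\phi$ and using $\tau^2-\chi_R = \tau^2\chi_{\Omega\setminus R}-\delta^2\chi_R$. Taking real parts of both sides and substituting $\phi = \phi_1 + \ii\,\phi_2$ gives
\[
(\cL-\mu_1)\phi_1 = s\,(\delta^2\chi_R - \tau^2\chi_{\Omega\setminus R})\,\phi_2~.
\]
(Equivalently, one can split $\cL_s\phi=\mu\phi$ into its real and imaginary parts directly; the real part is $\cL\phi_1 - s\chi_R\phi_2 = \mu_1\phi_1 - s\tau^2\phi_2$, which rearranges to the same identity.) The last step is to take the $L^2(\Omega)$ norm of the right-hand side: because the multiplier $\delta^2\chi_R - \tau^2\chi_{\Omega\setminus R}$ is supported disjointly on $R$ (where it equals $\delta^2$) and on $\Omega\setminus R$ (where it equals $-\tau^2$), the contributions are $L^2$-orthogonal, so
\[
\|(\cL-\mu_1)\phi_1\|_{L^2(\Omega)}^2 = s^2\bigl(\delta^4\|\phi_2\|_{L^2(R)}^2 + \tau^4\|\phi_2\|_{L^2(\Omega\setminus R)}^2\bigr)~,
\]
which is~\eqref{ResidualBound2}.

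I do not expect a genuine obstacle here: the only point deserving a moment's care is the interchange of $\cL$ with taking real and imaginary parts, which is legitimate precisely because $A$ and $V$ are real-valued and $\phi_1,\phi_2\in\Dom(\cL)$ (both lie in $\Dom(\cL)$ since $\phi$ does and $\Dom(\cL)$ is a real-linear subspace stable under conjugation). Everything after that is elementary algebra with indicator functions.
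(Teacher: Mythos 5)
Your proof is correct and follows essentially the same route as the paper: both start from the identity $(\cL-\mu_1)\phi=\ii\,s(\tau^2\chi_{\Omega\setminus R}-\delta^2\chi_R)\phi$ already derived in the proof of Theorem~\ref{KeyTheorem2}, take real parts to get $(\cL-\mu_1)\phi_1=-s(\tau^2\chi_{\Omega\setminus R}\phi_2-\delta^2\chi_R\phi_2)$, and conclude by $L^2$-orthogonality of the two disjointly supported pieces. Your explicit remark about $\cL$ having real coefficients and $\Dom(\cL)$ being stable under conjugation is a small but welcome bit of extra care that the paper leaves implicit.
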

\begin{proof}
  As was seen in the proof of Theorem~\ref{KeyTheorem2},
  $(\cL-\mu_1)\phi=\ii\,s(\tau^2\chi_{\Omega\setminus
  R}\phi-\delta^2\chi_R\phi)$.  Comparing the real and imaginary parts
of both sides, we determine that
\begin{align*}
  (\cL-\mu_1)\phi_1=-s(\tau^2\chi_{\Omega\setminus R}\phi_2-\delta^2\chi_{R}\phi_2)~.
\end{align*}
This identity immediately yields~\eqref{ResidualBound2}.  
\end{proof}

The following 1D examples illustrate several
of the ideas and results discussed so far.
\begin{example}\label{1DExample}
  For $\Omega=(0,1)$, we consider the operator
  \begin{align*}
    \cL=-\frac{d^2}{dx^2}+\sum_{k=1}^4V_k\chi_{R_k}\quad,\quad R_k=\frac{1}{4}(k-1,k)~,
  \end{align*}
  with homogeneous Dirichlet boundary conditions, for constants
  $V_k\geq 0$.  The 
  landscape function $u$ can be determined analytically in this case.
The eigenfunctions can also be determined analytically, up to the
solutions of a non-linear equation for the eigenvalues (cf.~\cite{Canosa1970}).
As a concrete illustration, we consider the case
$(V_1,V_2,V_3,V_4)=(0,80^2,0,400^2)$.
In this case, $u$ has precisely two local maxima, $u_1=0.008652$ and
$u_3=0.008819$, at $x_1=0.13155$ and $x_3=0.61972$, respectively.
The approach of~\cite{Arnold2019a} estimates the two ground state eigenvalues as
$\lambda\approx 1.25/u_3=141.74280$ and $\lambda\approx
1.25/u_1=144.46879$; the actual ground state eigenvalues in this case are
$\lambda=140.49323$ and $\lambda=143.18099$.
Using the factor
1.875 employed in~\cite{Arnold2019a}  for a (more complicated) 1D
Schr\"odinger problem, the localization interval for the first ground
state is $[0.528993, 0.710441]\subset R_3$, and for the second ground
state, it is $[0.0416835, 0.221412]\subset R_1$.

In Figure~\ref{1DExampleFig}, plots are given of the maximum localization measures
$\max\tau_k=\max\tau(\psi,R_k)$, for the eigenvectors associated with the
smallest sixteen eigenvalues, and for the eigenvectors associated with
first fifteen eigenvalues larger than $V_4$ and the one immediately
preceding them.
The plot concerning eigenvalues near or larger than $V_4$
was specifically chosen to
demonstrate localization in the region $R_4$, which would not have been
predicted in the approaches of~\cite{Arnold2019a} or~\cite{Altmann2019,Altmann2020}. 
Plots of three eigenvectors are also given, together with their
eigenvalues and the largest of their localization measures, to
illustrate what ``highly localized'' may or may not look like in
practice.  With respect to the standard ordering of eigenvalues
$0<\lambda_1< \lambda_2<\lambda_3<\cdots $ (all eigenvalues of $\cL$ are
simple), the eigenpairs pictured in Figure~\ref{1DExampleFig}(B)
correspond to $\lambda_{11}$, $\lambda_{13}$ and $\lambda_{96}$.
It is clear that $\tau_k\leq 0.84$ does not correspond to a
natural understanding of being highly localized in $R_k$, but that
$\tau_k\geq 0.96$ does, in these cases.
The first twelve eigenvectors for this example, those for which
$\lambda<V_2=80^2$, are all strongly localized in either $R_1$ or
$R_3$, alternating between these subdomains, with
$\max\{\tau_1,\tau_3\}>0.96$; and the remaining four eigenvectors,
though each most localized in $R_2$, are not highly localized in any of 
the four subdomains.
Among the sixteen eigenvectors much higher in the spectrum, four of
them are strongly localized in $R_4$, with $\tau_4>0.96$, the second,
third, fourth and seventh; none of
the rest are highly localized in any of the four subdomains.

A slight modification of the approach in~\cite{Canosa1970} allows for
the computation of complex eigenpairs for $\cL_s$, and we use it below.
In Table~\ref{1DExampleTab}, we see the five eigenvalues $\lambda$ of
$\cL$ for which $\lambda\in[0,220000]$ and $\delta(\psi,R_3)\leq
\delta^*=1/5$ for the corresponding eigenvector.  The twelfth
eigenvalue of $\cL$,
$\lambda=4954.5303$, just fails to make the cut, with
$\delta(\psi,R_3)=0.27074936$.  
For convenience in comparison, this eigenvalue and its localization measure are
included in the table \textit{in italics}.
Also given in this table are all
eigenvalues $\mu$ of $\cL_s=\cL+\ii\,s\chi_{R_3}$ within the region
$U(0,220000,s,\delta^*)$, for $s=1$ and $s=100$, together with the
localization measures $\delta(\phi,R_3)$ for their corresponding
eigenvectors.  In both cases for $\cL_s$, six eigenpairs make the cut,
with the final eigenvalue approximating $\lambda=4954.5303$.
Recalling~\eqref{RayleighQuotient}, we note that $\delta(\psi,R_3)$
can be obtained directly from $\Im\mu$,
$\delta(\psi,R_3)=\sqrt{1-\Im\mu/s}$.
When $s=1$, $\lambda$ and $\Re\mu$, and $\delta(\psi,R_3)$ and
$\delta(\phi,R_3)$, agree in all digits shown, for eigenpairs
$(\lambda,\psi)$ and $(\mu,\phi)$ that are matched.
We note that there are $130$
eigenvalues of $\cL$ in $[0,220000]$, so the reduction to six
candidates for localization in $R$ is significant.

In Table~\ref{1DExampleTabB}, we see the two eigenvalues $\lambda$ of
$\cL$ for which $\lambda\in [0,220000]$ and $\delta(\psi,R_4)\leq
\delta^*=1/5$ for the corresponding eigenvector.  Also given in this table are all
eigenvalues $\mu$ of $\cL_s=\cL+\ii\,s\chi_{R_4}$ within the region
$U(0,220000,s,\delta^*)$, for $s=1$ and $s=100$, together with the
localization measures $\delta(\phi,R_4)$ for their corresponding
eigenvectors.  In both cases for $\cL_s$, six eigenpairs make the cut,
and the eigenvalues of $\cL$ that best match the remaining four of
$\cL_s$, together with their localization measures, are also given
\textit{in italics} in this table.

Finally, in Figure~\ref{1DExampleFigB}, analogues of the plots in
Figure~\ref{1DExampleFig}(B) are given for $\cL_s$, with $s=100$, and
$R=R_3$ for the first pair of plots, $R=R_2$ for the second pair,
and $R=R_4$ for the third
pair.  Each pair of plots shows the real and imaginary parts of an
eigenfunction $\phi$, normalized so that $\|\phi\|_{L^2(\Omega)}=1$
and $\alpha=\|\Im\phi\|_{L^2(\Omega)}$ is minimized---see
Remark~\ref{EigenvectorRotation}.  Up to scaling, the
corresponding $\psi$ and $\phi_1=\Re\phi$ show strong resemblances.
Localization values $\delta(\phi,R)$ and relative residuals
$\|(\cL-\mu_1)\phi_1\|_{L^2(\Omega)}/\|\phi_1\|_{L^2(\Omega)}$ (see
Proposition~\ref{ResidualBound}) are also included.
The second eigenvector, shown in Figure~\ref{1DExampleFigB}(B), has
the largest (worst) $\delta$-value, $\alpha$-value and relative
residual among the three, whereas the third eigenvector is best in
each of these categories.
The $\alpha$-values in
Table~\ref{1DExampleTab} for $s=100$ increased from
$7.189\times10^{-4}$ to $5.337\times 10^{-3}$ for the first five, and
$\alpha=1.1015\times10^{-2}$ for the sixth.  For the eigenvectors in
Table~\ref{1DExampleTabB} with $s=100$, $\alpha=4.152\times10^{-3}$
and $\alpha=8.870\times 10^{-3}$ for the first two, and the
$\alpha$-values ranged between $1.515\times10^{-2}$ and
$2.613\times10^{-2}$ for the final four.

Based on the apparent correlation
between a normalized eigenvector $\phi$ of $\cL_s$ having a small
$\alpha$-value and it being close to an eigenvector $\psi$ of $\cL$
that is localized in $R$, one might be tempted think that
$\alpha$-values for eigenvectors of $\cL_s$ are, \textit{by
  themselves}, decent indicators of localization of eigenvectors of
$\cL$ in $R$.  However, this is not the case.  For example, the second
eigenvalue of $\cL_s$, with $s=100$ and $R=R_3$, is
$\mu=143.18098+6.2629546\times 10^{-17}\,\ii$, with the real part of
the (normalized) eigenfunction $\phi$
highly localized in $R_1$, and its significantly smaller imaginary
part highly localized in $R_3$: $\delta(\Re\phi,R_1)=0.032815726$,
$\delta(\Im\phi,R_3)=0.042859575$ and $\alpha=6.9038783\times
10^{-10}$.  The eigenvector $\phi$ of $\cL$ that is closest to $\phi$
has $\lambda=143.18098$, $\delta(\psi,R_1)=0.032815726$ and
$\delta(\psi,R_3)=1.0000000$.  So a small $\alpha$-value for $\phi$ in
this case corresponds to a nearby eigenvector of $\cL$ that is highly
localized in the complement of $R_3$!  Of course, we never would have
considered this eigenpair of $\cL_s$ if we were searching in the
region $U$ indicated in Corollary~\ref{SearchRegionCor}.

{\sc Mathematica}
was used for all of these computations and plots, employing very high precision
  arithmetic.
\begin{figure}
	\centering
	\subfloat[$\max\tau_k$ for $\lambda\in(140,7155)$ (left) and
        $\lambda\in(158923,176357)$ (right)]
	{
          \includegraphics[width=0.45\textwidth]{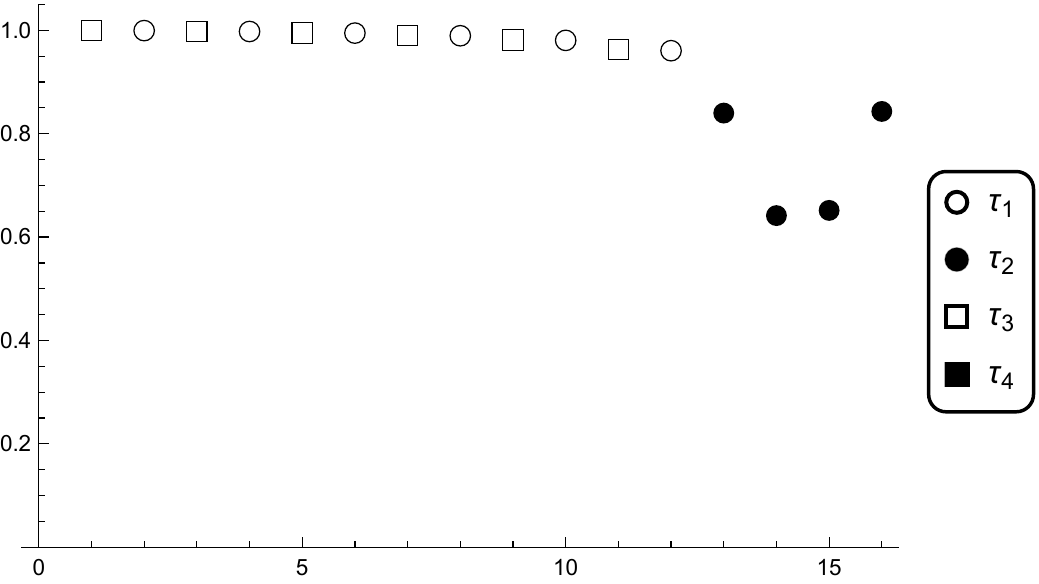}\quad
          \includegraphics[width=0.45\textwidth]{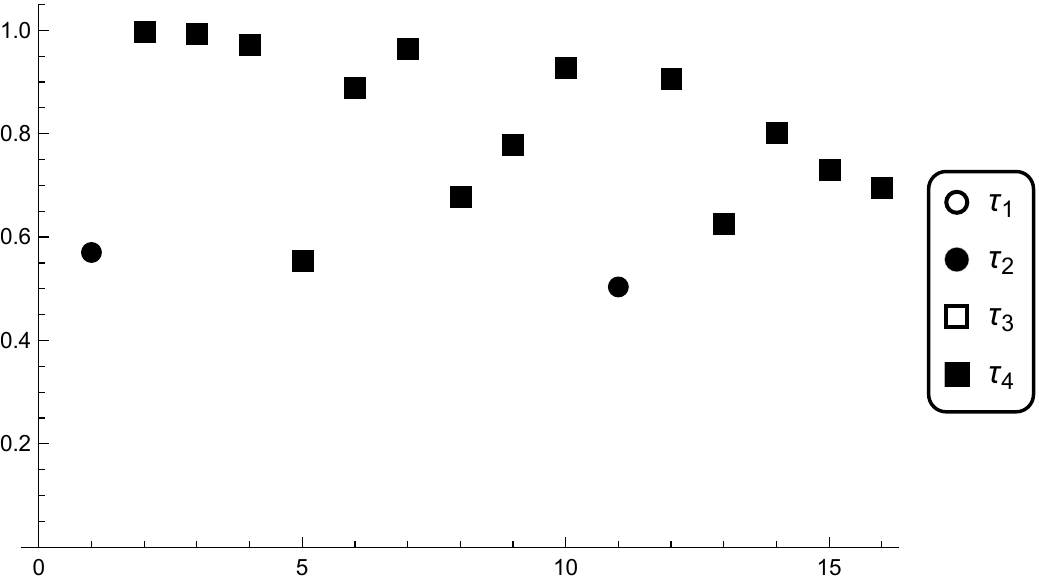}
	}
\quad
	\subfloat[Three eigenvectors, with $\lambda$ and $\max \tau_k$.]
	{
          \includegraphics[width=0.3\textwidth]{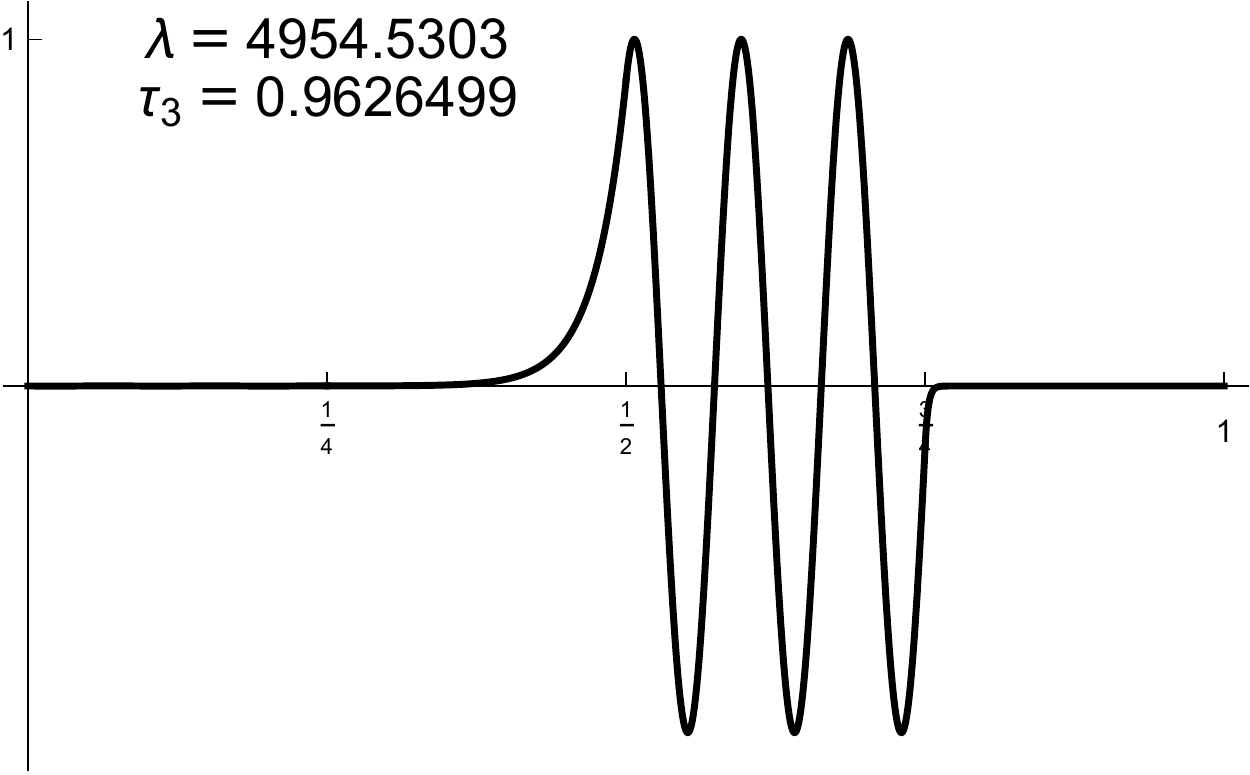}\quad
          \includegraphics[width=0.3\textwidth]{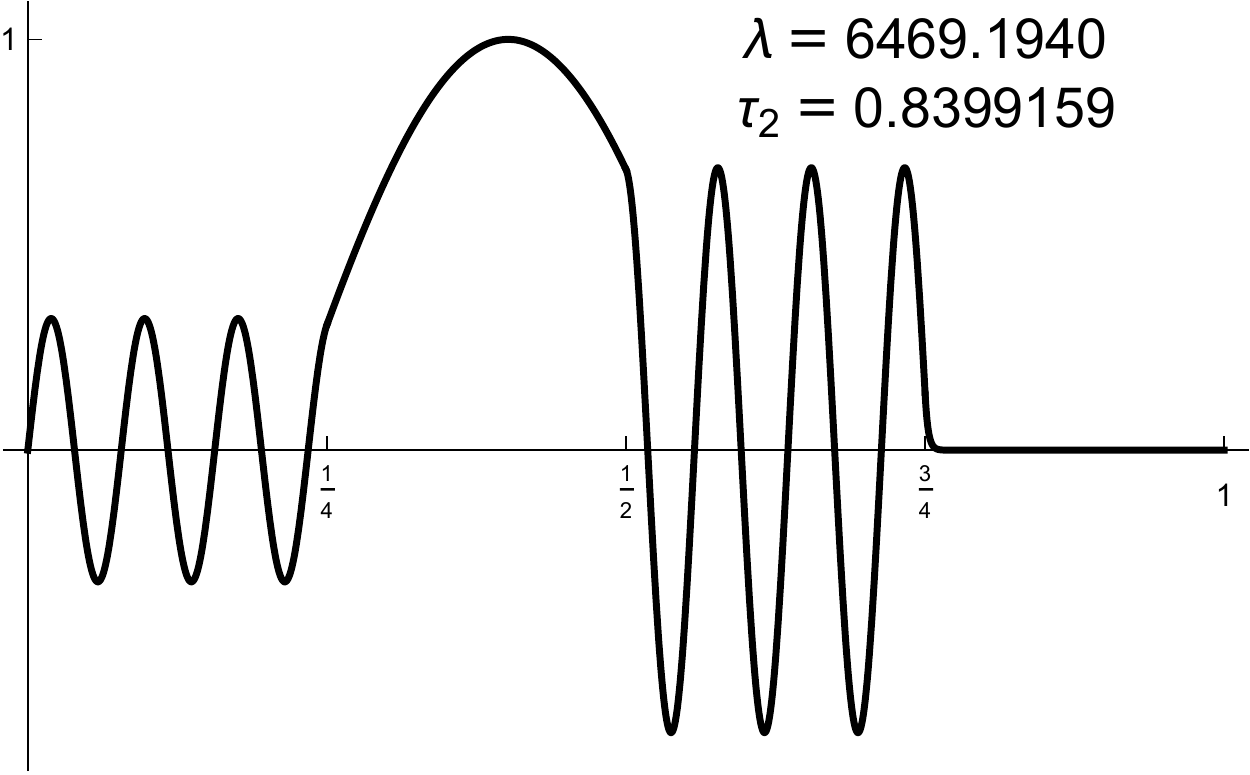}\quad
          \includegraphics[width=0.3\textwidth]{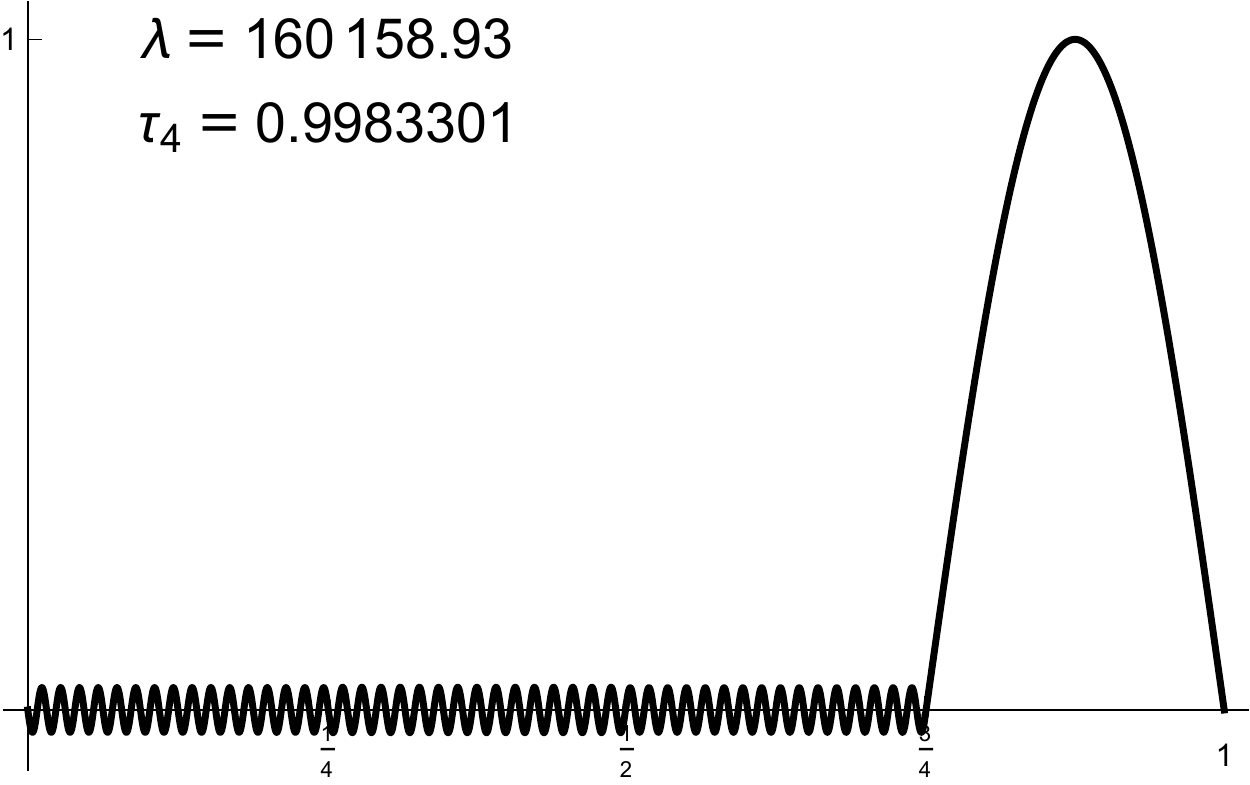}
	}
\caption{\label{1DExampleFig} Localization measures
  $\max\tau_k=\max\tau(\psi,R_k)$ for 32 eigenvectors, and plots of three of
  these eigenvectors, for Example~\ref{1DExample}.}
\end{figure}

\begin{table}
  \centering
  \caption{\label{1DExampleTab} Eigenvalues and localization measures
    in $R_3$
    for eigenpairs $(\lambda,\psi)$ of $\cL$ with
    $\lambda\in[0,220000]$ and $\delta(\psi,R_3)\leq \delta^*=1/5$,
    for Example~\ref{1DExample}.  Eigenvalues and localization
    measures for the eigenpairs $(\mu,\phi)$ of $\cL_s$ with $\mu\in
    U(0,220000,s,1/5)$, for $s=1$ and $s=100$.  In italics are
    the eigenvalue and localization measure for $\cL$ that fail to
    satisfy $\delta(\psi,R_3)\leq \delta^*$, but which have obvious
    counterparts among the  given eigenpairs of $\cL_s$.}
\begin{tabular}{|cc|ccc|ccc|}\hline
  \multicolumn{2}{|c|}{}&\multicolumn{3}{c|}{$s=1$}&\multicolumn{3}{c|}{$s=100$}\\
  $\lambda$&$\delta(\psi,R_3)$&$\Re\mu$&$\Im\mu$&$\delta(\phi,R_3)$&$\Re\mu$&$\Im\mu$&$\delta(\phi,R_3)$\\\hline
  140.49323&0.0324770&140.49323&0.99894524&0.0324770&140.49441&99.894539&0.0324748\\
  561.35749&0.0659934&561.35749&0.99564487&0.0659934&561.36244&99.564551&0.0659886\\
  1260.5517&0.1019069&1260.5517&0.98961499&0.1019069&1260.5640&98.961665&0.1018987\\
  2233.8447&0.1425062&2233.8447&0.97969199&0.1425062&2233.8708&97.969580&0.1424928\\
  3472.5421&0.1928871&3472.5421&0.96279456&0.1928871&3472.5974&96.280425&0.1928620\\
  \textit{4954.5303}&\textit{0.2707494}&4954.5303&0.92669479&0.2707494&4954.6877&92.674005&0.2706658\\\hline
\end{tabular}
\end{table}

\begin{table}
  \centering
  \caption{\label{1DExampleTabB} Eigenvalues and localization measures
    in $R_4$
    for eigenpairs $(\lambda,\psi)$ of $\cL$ with
    $\lambda\in[0,220000]$ and $\delta(\psi,R_4)\leq \delta^*=1/5$,
    for Example~\ref{1DExample}.  Eigenvalues and localization
    measures for the eigenpairs $(\mu,\phi)$ of $\cL_s$ with $\mu\in
    U(0,220000,s,1/5)$, for $s=1$ and $s=100$.  In italics are
    the eigenvalues and localization measures for $\cL$ that fail to
    satisfy $\delta(\psi,R_4)\leq \delta^*$, but which have obvious
    counterparts among the  given eigenpairs of $\cL_s$.
  }
\begin{tabular}{|cc|ccc|ccc|}\hline
  \multicolumn{2}{|c|}{}&\multicolumn{3}{c|}{$s=1$}&\multicolumn{3}{c|}{$s=100$}\\
  $\lambda$&$\delta(\psi,R_4)$&$\Re\mu$&$\Im\mu$&$\delta(\phi,R_4)$&$\Re\mu$&$\Im\mu$&$\delta(\phi,R_4)$\\\hline
  160158.93&0.0577667&160158.93&0.99666301&0.0577667&160158.92&99.667632&0.0576513\\
  160629.92&0.1092483&160629.92&0.98806481&0.1092483&160629.94&98.809982&0.1090879\\
  \textit{161389.73}&\textit{0.2332336}&161389.73&0.94560213&0.2332335&161390.21&94.613050&0.2320981\\
  \textit{163942.68}&\textit{0.2610104}&163942.68&0.93187361&0.2610103&163942.71&93.203093&0.2607088\\
  \textit{167673.57}&\textit{0.3716197}&167673.57&0.86189881&0.3716197&167673.93&86.221341&0.3711962\\
  \textit{170192.99}&\textit{0.4219290}&170192.99&0.82197599&0.4219289&170192.51&82.233071&0.4215084\\\hline
\end{tabular}
\end{table}

\begin{figure}
	\centering
	\subfloat[$s=100$ for $R_3$,  $\delta(\phi,R_3)=0.27066575$, $\|(\cL-\mu_1)\phi_1\|_{L^2(\Omega)}/\|\phi_1\|_{L^2(\Omega)}=0.67472464$.]
	{
          \includegraphics[width=0.45\textwidth]{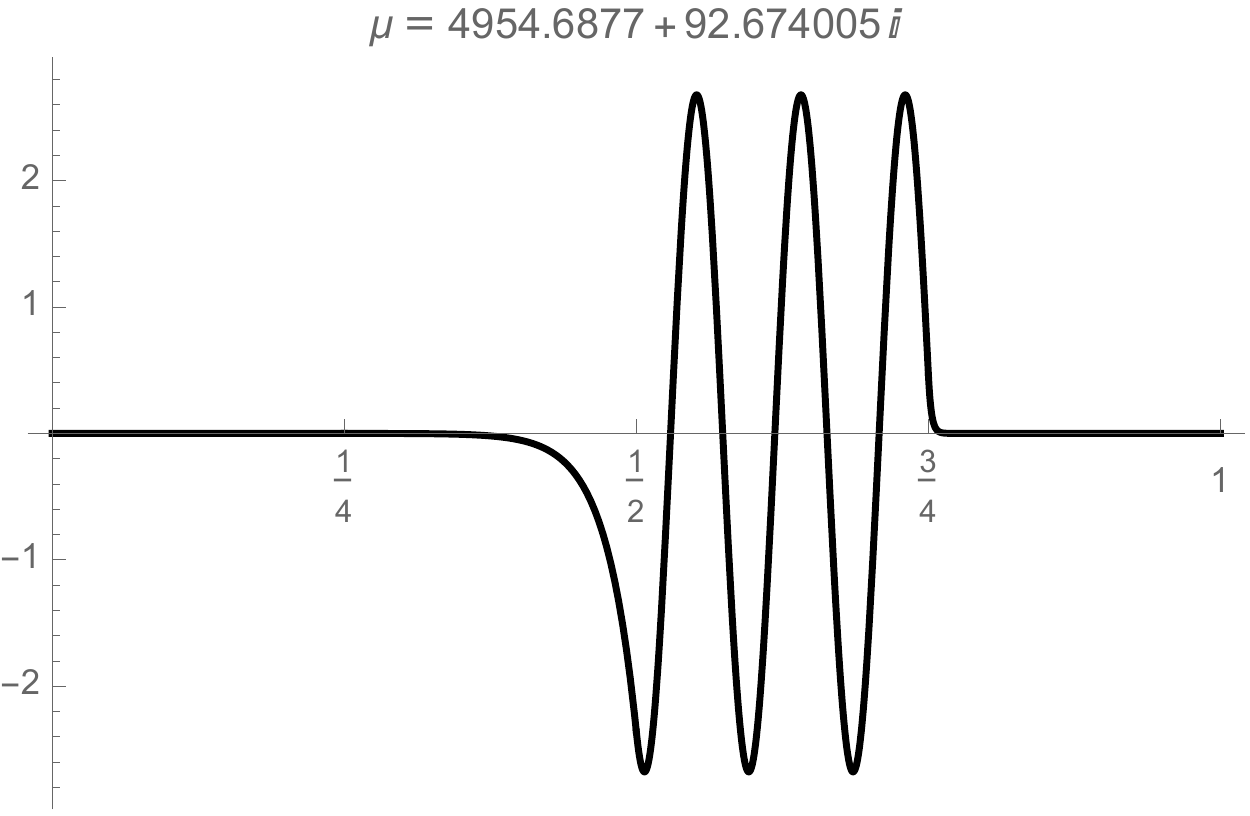}\quad
          \includegraphics[width=0.45\textwidth]{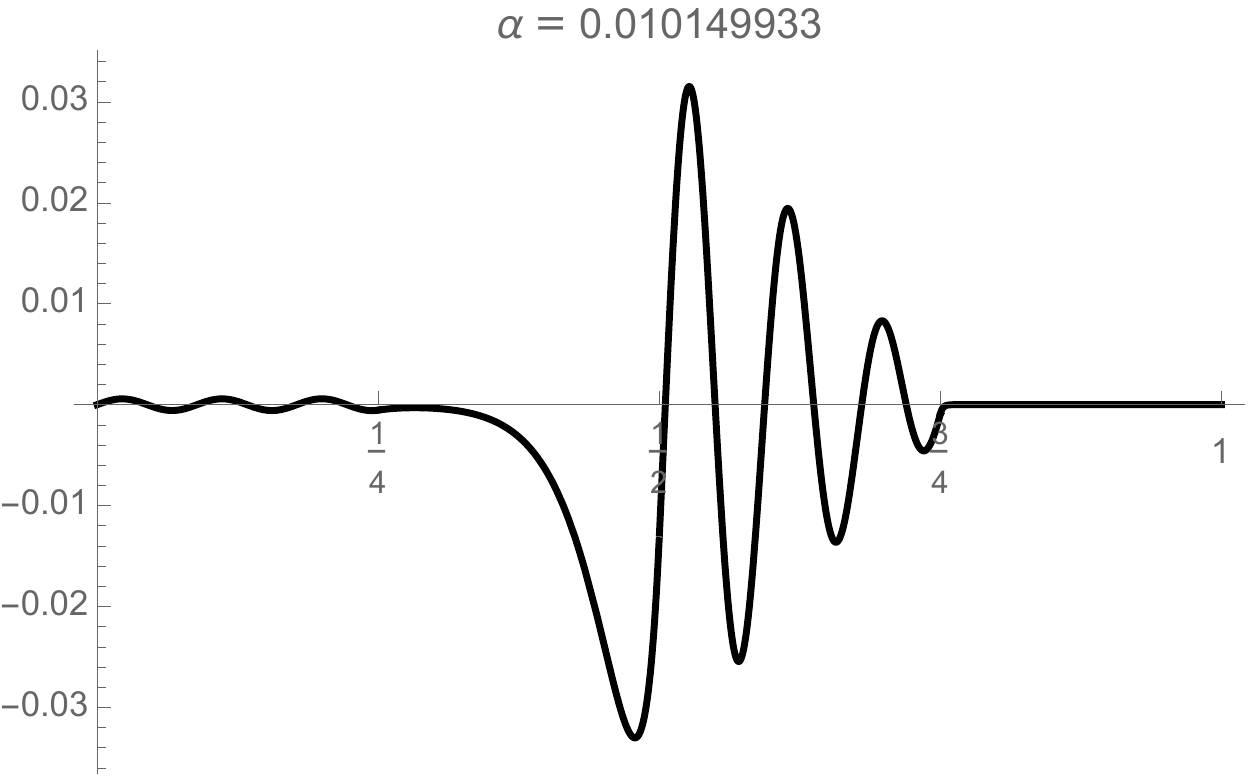}
	}
\\
	\subfloat[$s=100$ for $R_2$,  $\delta(\phi,R_2)=0.53309756$, $\|(\cL-\mu_1)\phi_1\|_{L^2(\Omega)}/\|\phi_1\|_{L^2(\Omega)}=10.9088368$.]
	{
          \includegraphics[width=0.45\textwidth]{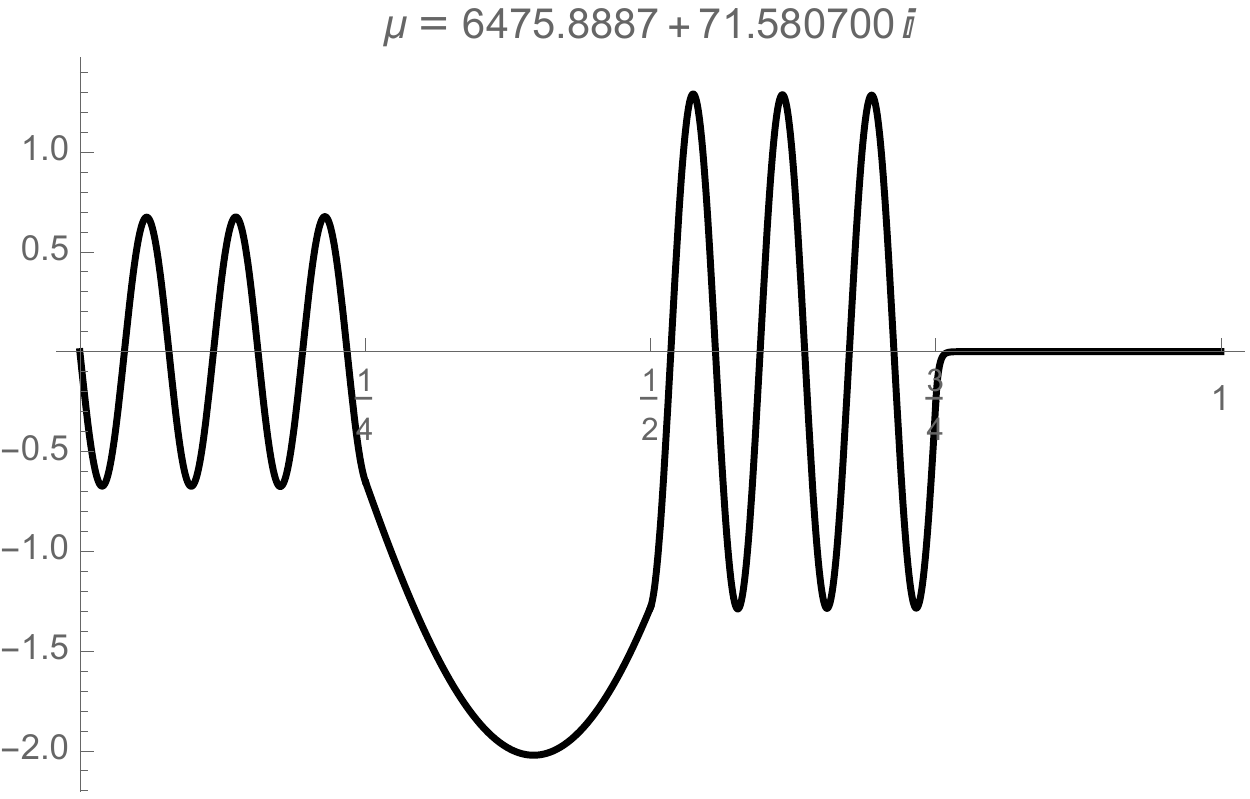}\quad
          \includegraphics[width=0.45\textwidth]{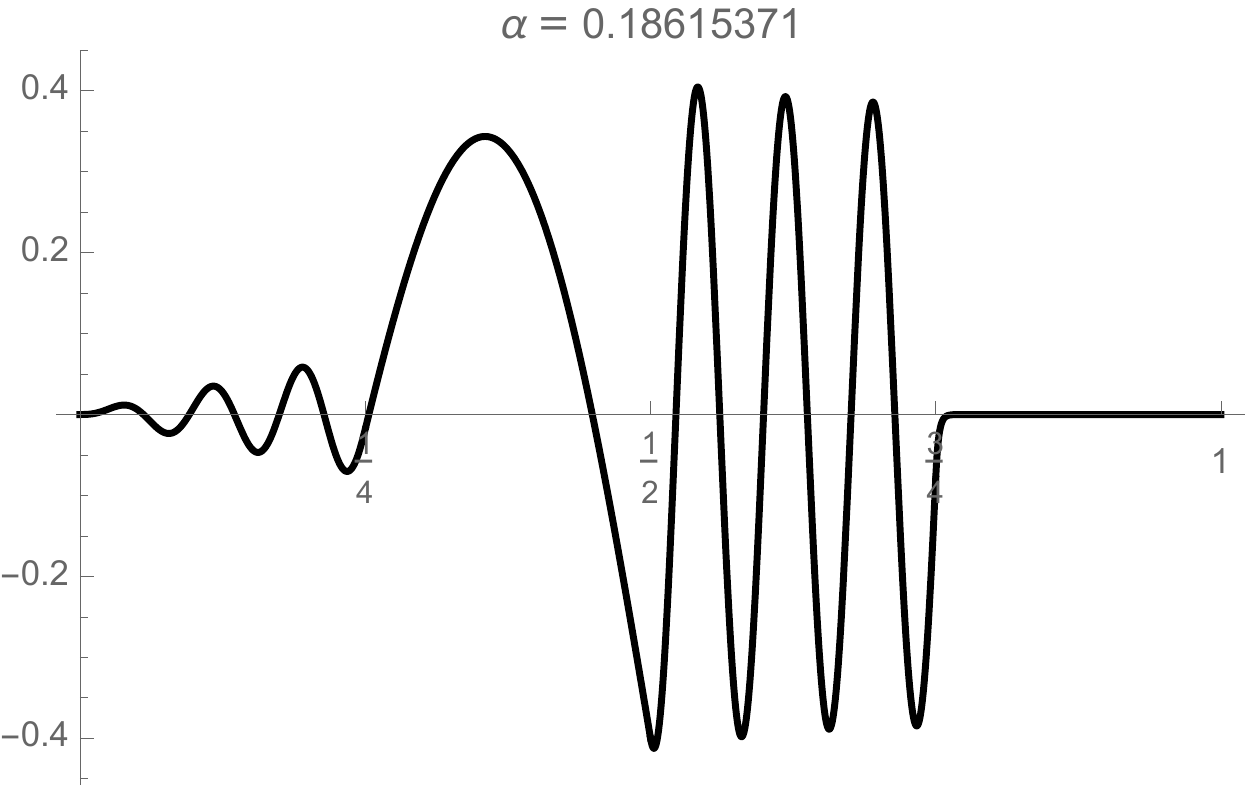}
	}
\\
	\subfloat[$s=100$ for $R_4$,  $\delta(\phi,R_4)=0.05765133$, $\|(\cL-\mu_1)\phi_1\|_{L^2(\Omega)}/\|\phi_1\|_{L^2(\Omega)}=0.36354044$.]
	{
          \includegraphics[width=0.45\textwidth]{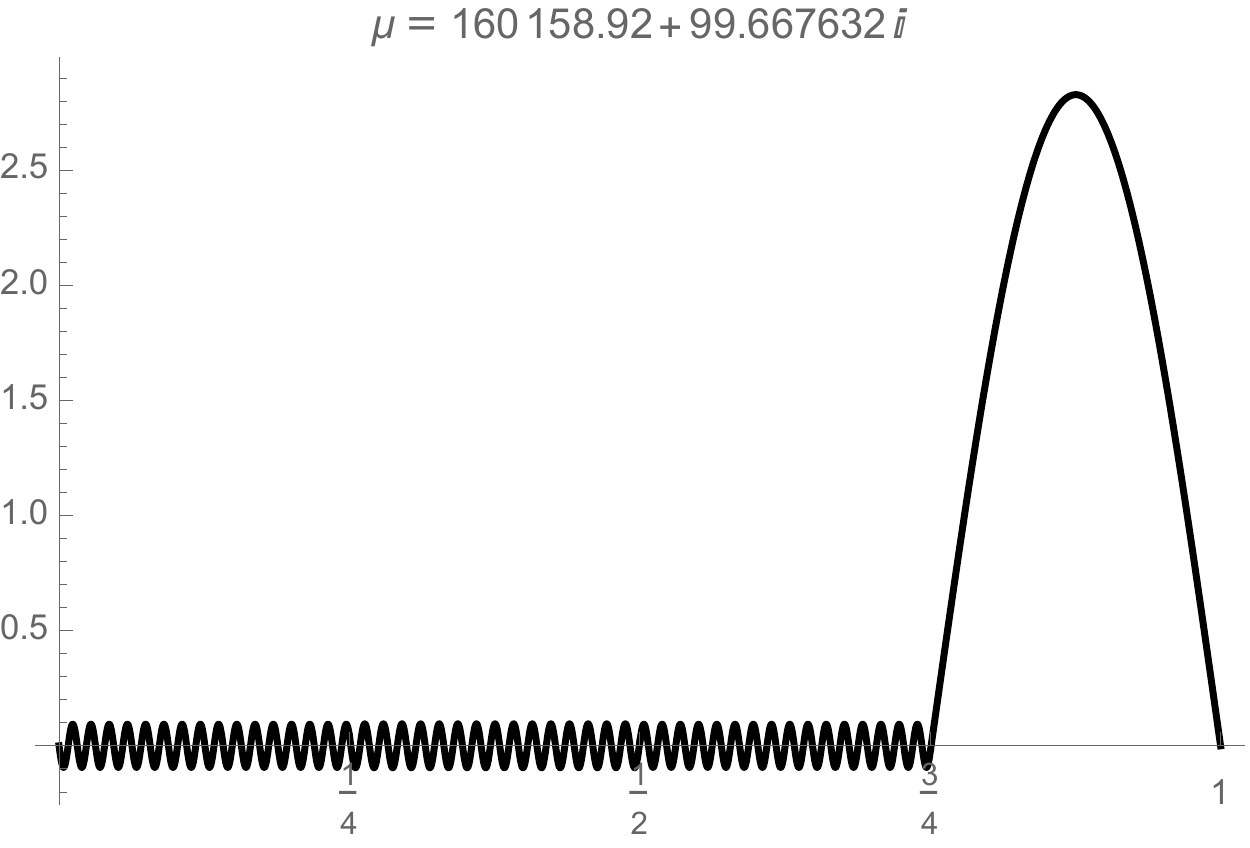}\quad
          \includegraphics[width=0.45\textwidth]{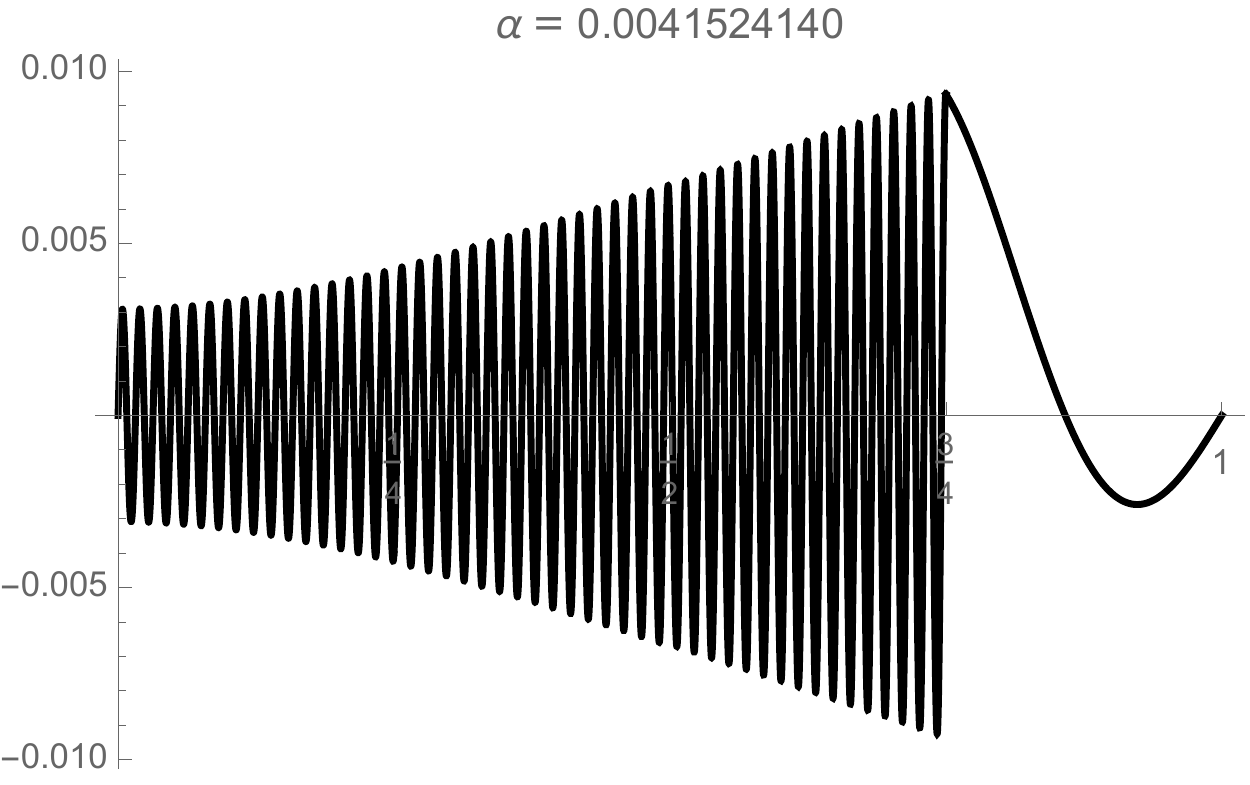}
	}        
\caption{\label{1DExampleFigB} Plots of $\phi_1=\Re \phi$ (left) and
  $\phi_2=\Im \phi$ for  eigenvectors $\phi$ of $\cL_s$, normalized so that
  $\|\phi\|_{L^2(\Omega)}=1$ and $\alpha=\|\phi_2\|_{L^2(\Omega)}$ is
  minimized, for Example~\ref{1DExample}.  The eigenvalues
  $\mu=\mu_1+\ii\,\mu_2$ are given above the plots of $\phi_1$,
  and the $\alpha$-values are given above the plots of $\phi_2$.  
  Compare with Figure~\ref{1DExampleFig}(B).}
\end{figure}
\end{example}

  \begin{example}[False Indication of Localization, Large $s$]\label{FalseLocalizationExample1}
    It can be the case that $\cL_s=\cL+\ii\,s\,\chi_R$ has localized
    eigenvectors even when $\cL$ has none.  For example, if
    $\cL=-\frac{d^2}{dx^2}$ on $(0,1)$, with homogeneous Dirichlet
    conditions, the eigenpairs are
    $(\lambda_n,\psi_n)=((n\pi)^2,\sin(n\pi x))$.  Taking $R=(0,1/4)$,
    we have
    \begin{align*}
    \frac{1}{4}\left(1-\frac{2}{\pi}\right)\leq
      [\tau(\psi_n,R)]^2=\frac{1}{4}\left(1+\frac{\ii^{n+1}+(-\ii)^{n+1}}{n\pi}\right)
      \leq \frac{1}{4}\left(1+\frac{2}{3\pi}\right)\quad,\quad
       [\tau(\psi_n,R)]^2\to \frac{1}{4}~.
    \end{align*}
    Clearly, none of the eigenvectors are localized in $R$.

    However, if we take $s=10^4$, we find an eigenpair $(\mu,\phi)$ of
    $\cL_s$ with $\mu=149.02494+9991.7736\,\ii$ and
    $\delta(\phi,R)=0.02868$.  In this case, $\phi$ is highly localized
    in $R$, and $\mu$ will lie in some $U(a,b,s,\delta^*)$ for any
    $\delta^*\geq 0.000822$, which could lead to the false indicator
    that there is an eigenpair $(\lambda,\psi)$ of $\cL$ with $\psi$
    highly localized in $R$.  The nearest eigenvalue of $\cL+\ii\,s$
    to $\mu$ is $\lambda=(4\pi)^2+\ii\,s$, with $\psi=\psi_4$, for
    which $R$ is a nodal domain. 
  \end{example}

   \begin{example}[False Indication of Localization, Small $s$]\label{FalseLocalizationExample2}
     One might wonder if only allowing smaller $s$, say $s\approx 1$,
     would eliminate such false indicators of localization of
     eigenvectors of $\cL$, but that is not necessarily the case.
     Suppose that $\cL=-\frac{d^2}{dx^2}+V\chi_{(1/4,3/4)}$.  It is
     straight-forward to show that eigenvectors of $\cL$ must have
     either even or odd symmetry about $x=1/2$, so eigenvectors cannot
     be highly localized in either $R=(0,1/4)$ or $R=(3/4,1)$.  We
     also note that, being a Sturm-Liouville problem, the eigenvalues
     are known to be simple, but choosing large $V$ can result in
     distinct eigenvalues that are very close. 
    For our
    illustrations, we choose $V=80^2$.
  The eigenvectors corresponding to the smallest two eigenvalues of
  $\cL$ are given in Figure~\ref{FalseEx2}.  These eigenvalues are
  extremely close to each other, $\lambda_2-\lambda_1\approx
  3.59\times 10^{-16}$.  Increasing $V$ reduces this gap even further.

When $R=(0,1/4)$ and $s=1$ are chosen for $\cL_s$, the eigenpair
$(\mu,\phi)$ having smallest real part $\mu_1$ is given in
Figure~\ref{FalseEx2}.  For this eigenpair, we have
$\delta(\phi,R)=0.032815726$,
$\|(\cL-\mu_1)\phi_1\|_{L^2(\Omega)}/\|\phi_1\|_{L^2(\Omega)}=7.27553\times
10^{-6}$, which is a very strong false(!) indicator of localization of
an eigenvector $\psi$ of $\cL$ having eigenvalue $\lambda$ near $\mu_1$.
There is a counterpart of $\lambda_2$ among the eigenvalues of $\cL_s$
as well, having real part $143.180985607932844609$ (slightly larger
than $\mu_1=\Re\mu$) and imaginary part $3.2278925\times10^{-32}$.
The corresponding eigenvector is highly localized in $(3/4,1)$.  Motivated
by this observation, we mention a simple analogue of~\eqref{EigenvalueCloseness}
whose proof follows precisely the
same pattern as that for~\eqref{EigenvalueCloseness} in Theorem~\ref{KeyTheorem}.  For an eigenpair
$(\lambda,\psi)$ of $\cL$, it holds that
    \begin{align}\label{AltEigenvalueEstimate}
    \dist(\lambda\,,\,\Spec(\cL_s))&\leq
                                     s\tau(\psi,R)=s\delta(\psi,\Omega\setminus
                                     R)~.
    \end{align}
An obvious analogue for~\eqref{EigenvectorCloseness} holds as well,
though we do not state it here.

Changing to $R=(3/4,1)$ yields essentially identical results as were
obtained for $R=(0,1/4)$, up to a flip in the graphs of the real and
imaginary parts of $\phi$ across $x=1/2$ and a change in signs.  These
are also given in Figure~\ref{FalseEx2}.  If one had the inspired idea
of taking $R=(0,1/4)\cup(3/4,1)$, the first two eigenpairs of $\cL_s$
(ordered by their real parts) are very close to those of $\cL$.  These
are given in Figure~\ref{FalseEx2B}.

Because of the tight clustering of eigenvalues of $\cL$, we use this
example to discuss the eigenvector
result~\eqref{EigenvectorCloseness2} in Theorem~\ref{KeyTheorem2}.
With $R=(0,1/4)$, the pair $(\mu_1,\phi_1)=(\Re\mu,\Re\phi)$
associated corresponding to Figure~\ref{FalseEx2} (B) has $\mu_1$ very
close to two eigenvalues of $\cL$, those we called $\lambda_1$ and
$\lambda_2$ above, but $\phi_1$ is not close any
eigenvector of $\cL$.  If we were to take $\Lambda=\{\lambda_1\}$ or
$\Lambda=\{\lambda_2\}$ in~\eqref{EigenvectorCloseness2}, then the
denominator in the bound,
$\dist(\mu_1,\Spec(\cL)\setminus\Lambda)$,
is extremely small, which permits the poor approximation of $\phi_1$
in $E(\Lambda,\cL)$.  Note that $\mu_1\notin\Spec(\cL)$, so
$\Spec(\cL)\setminus(\Lambda\cup\{\mu_1\})=\Spec(\cL)\setminus
\Lambda$, and so we use the more concise expression $\dist(\mu_1,\Spec(\cL)\setminus\Lambda)$.
However, if we take
$\Lambda=\{\lambda_1,\lambda_2\}$, then
$\dist(\mu_1,\Spec(\cL)\setminus\Lambda)$ is much
larger, and a good approximation of $\phi_1$
in $E(\Lambda,\cL)$ is ensured.  More specifically, the next closest
eigenvalues of $\cL$ to $\mu_1$ are $\lambda_3=572.082899256658465449$ 
and $\lambda_4=572.08289925665847099$, so
$\dist(\mu_1,\Spec(\cL)\setminus\Lambda)=\lambda_3-\mu_1=428.90191$.
In this case, it is easy to see how
$\phi_1$ is very close to a linear combination of the eigenvectors of $\cL$
pictured in Figure~\ref{FalseEx2}(A); calling these $\psi_1$ and
$\psi_2$, we see that $\phi_1\approx -(\psi_1+\psi_2)/\sqrt{2}$.

As a matter of interest, we mention that the landscape function $u$ for $\cL$ has a
single local maximum value $u_{\max}=0.0086523880$, which is achieved at
$x=x_{\max}=0.13154762$ and $x=1-x_{\max}$.  Using the approach
of~\cite{Arnold2019a}, the corresponding estimate of the smallest
eigenvalue is $\tilde\lambda =1.25/u_{\max}=144.46879$.  The summary
description for finding a localization region $R$ that was given in
Section~\ref{Intro} is inadequate in this case, because both connected
components of $\{x\in\Omega:\,W(x)\geq E\}$ (for
$E\geq \tilde\lambda$) contain a minimizer of $W=1/u$, and both
components are of the same size, so such a consideration cannot be
used as a tie-breaker.  The authors of~\cite{Arnold2019a} note that
tightly clustered minima and/or minimizers of $W$ can make the choice
of corresponding localization regions much more challenging, and
indicate that they may pursue a more nuanced approach in the future.

  \begin{figure}
	\centering
	\subfloat[The first two eigenvectors of $\cL$.]
	{
          \includegraphics[width=0.45\textwidth]{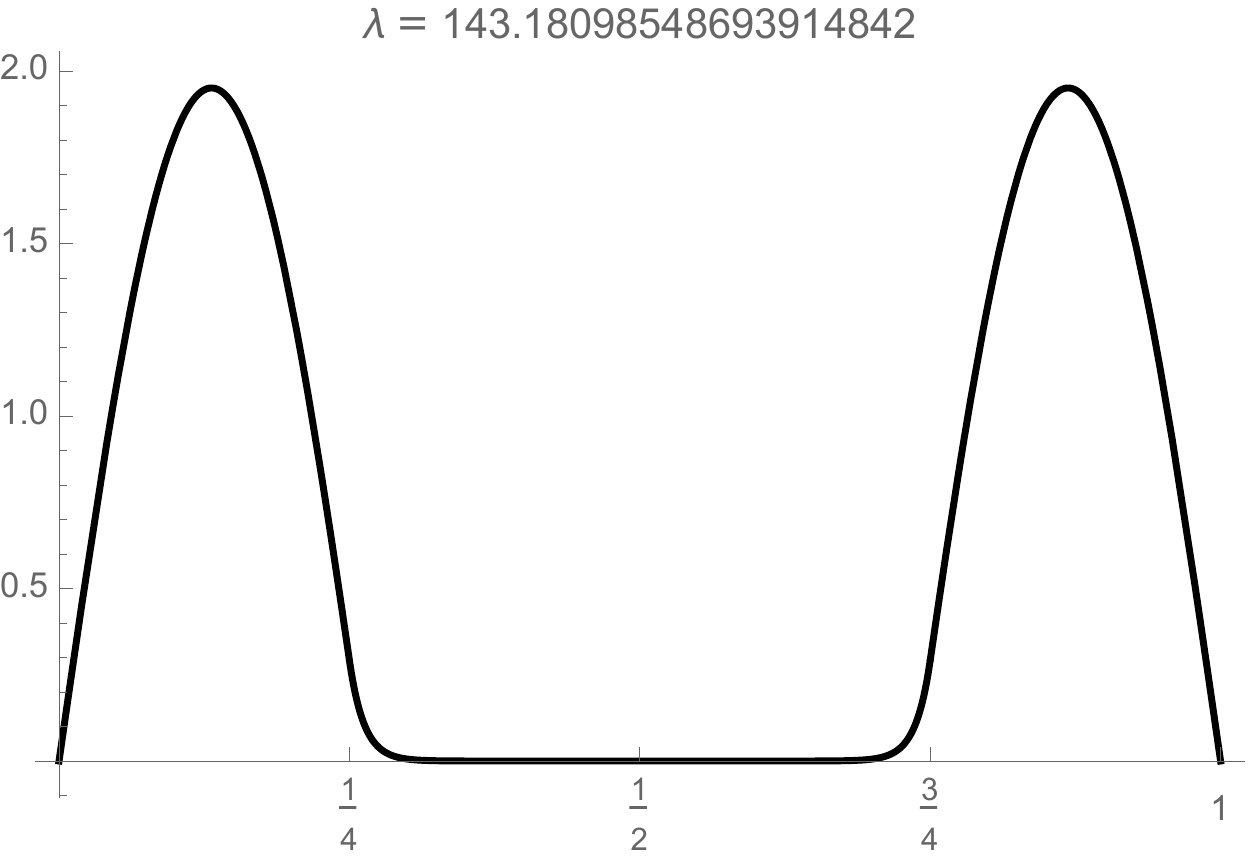}\quad
          \includegraphics[width=0.45\textwidth]{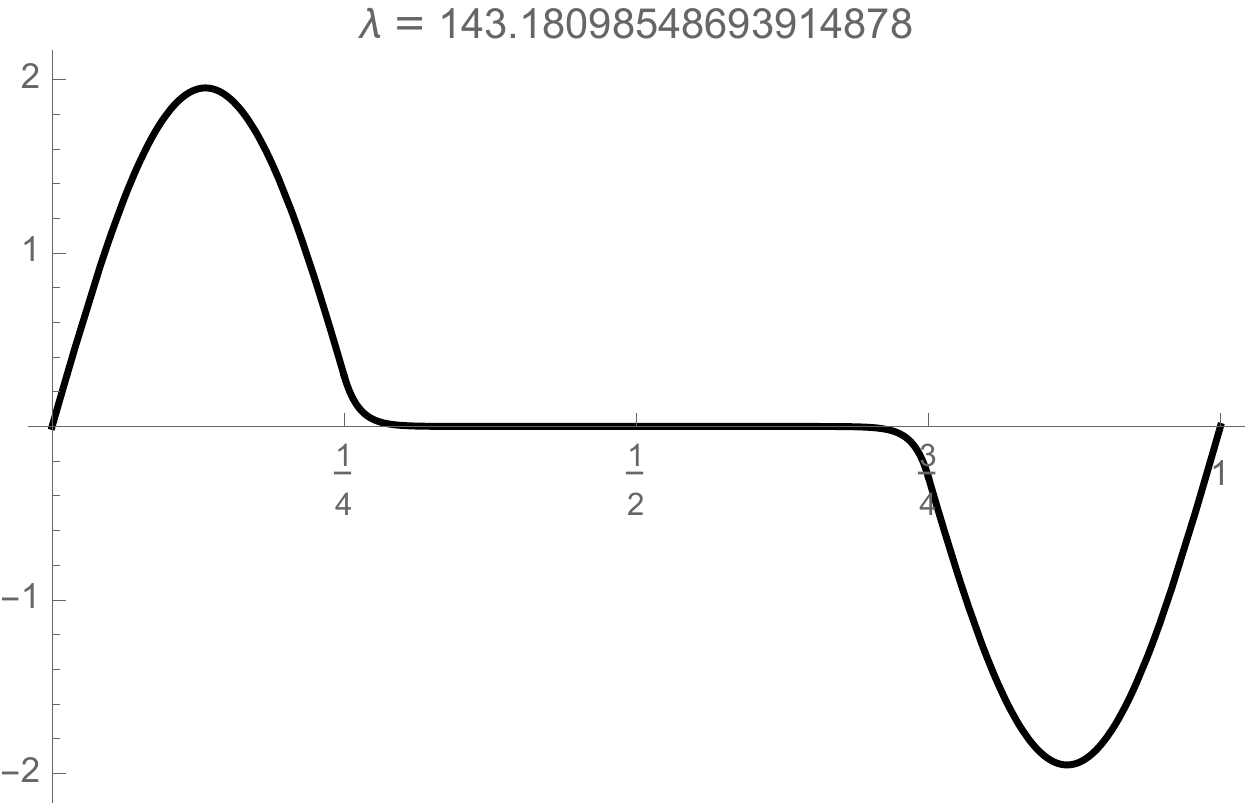}
	}
\\
	\subfloat[The real (left) and imaginary of the first eigenpair
        of $\cL_s$ with $R=(0,1/4)$.  ]
	{
          \includegraphics[width=0.45\textwidth]{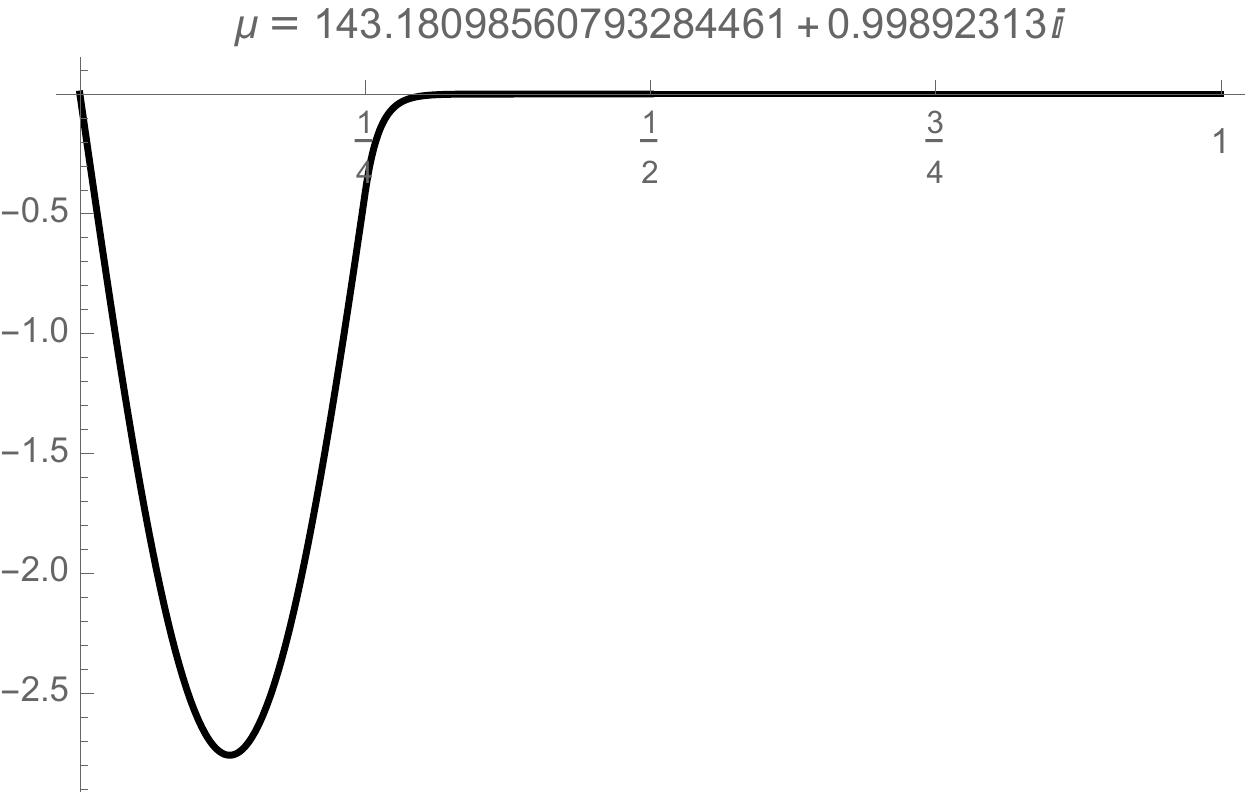}\quad
          \includegraphics[width=0.45\textwidth]{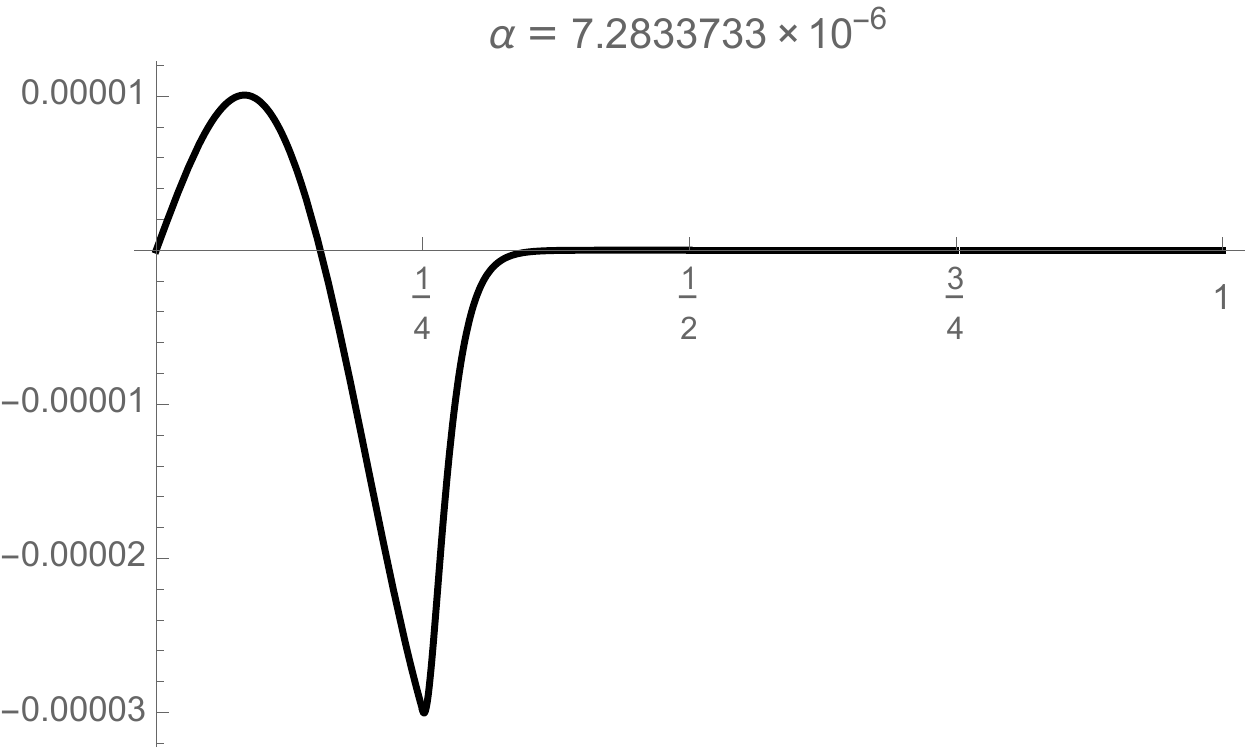}
	}
\\
	\subfloat[The real (left) and imaginary of the first eigenpair
        of $\cL_s$ with $R=(3/4,1)$.]
	{
          \includegraphics[width=0.45\textwidth]{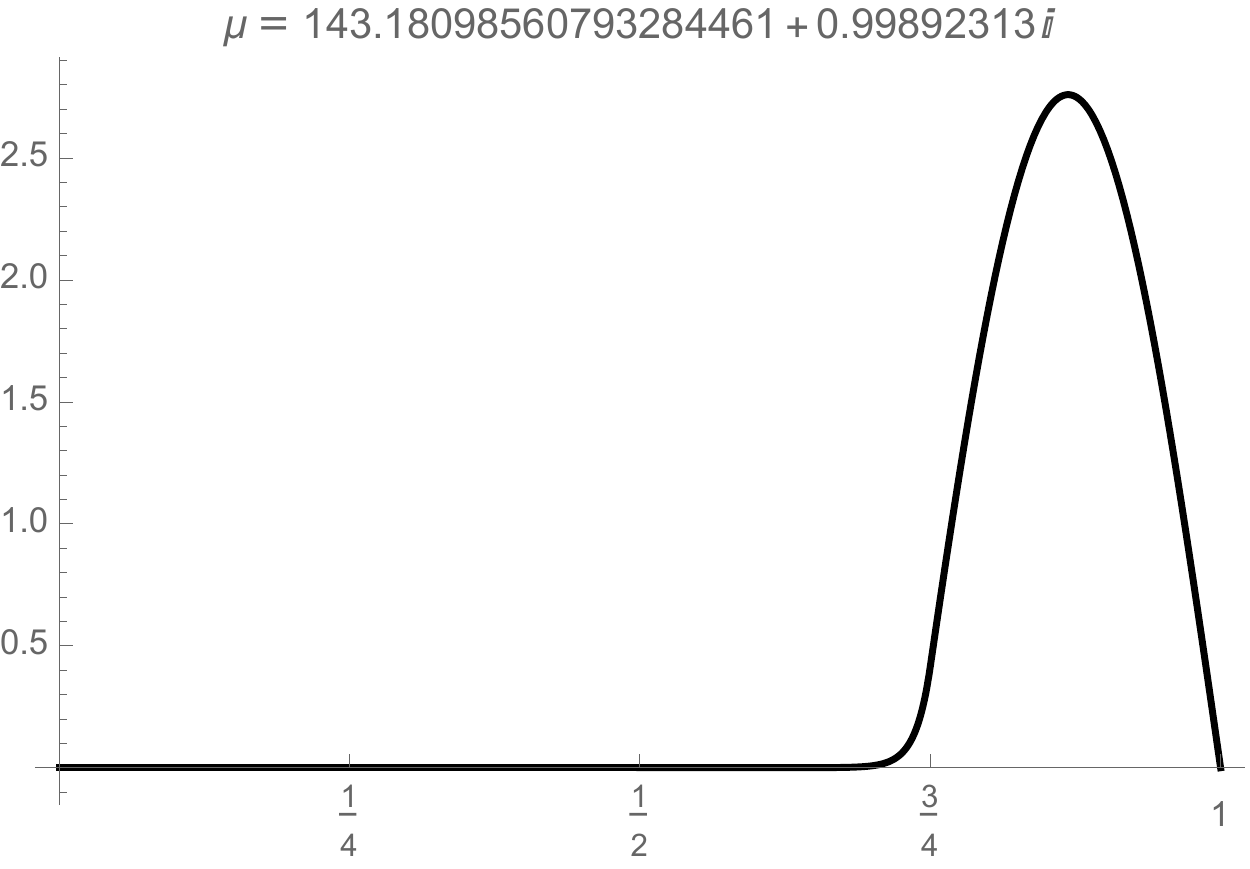}\quad
          \includegraphics[width=0.45\textwidth]{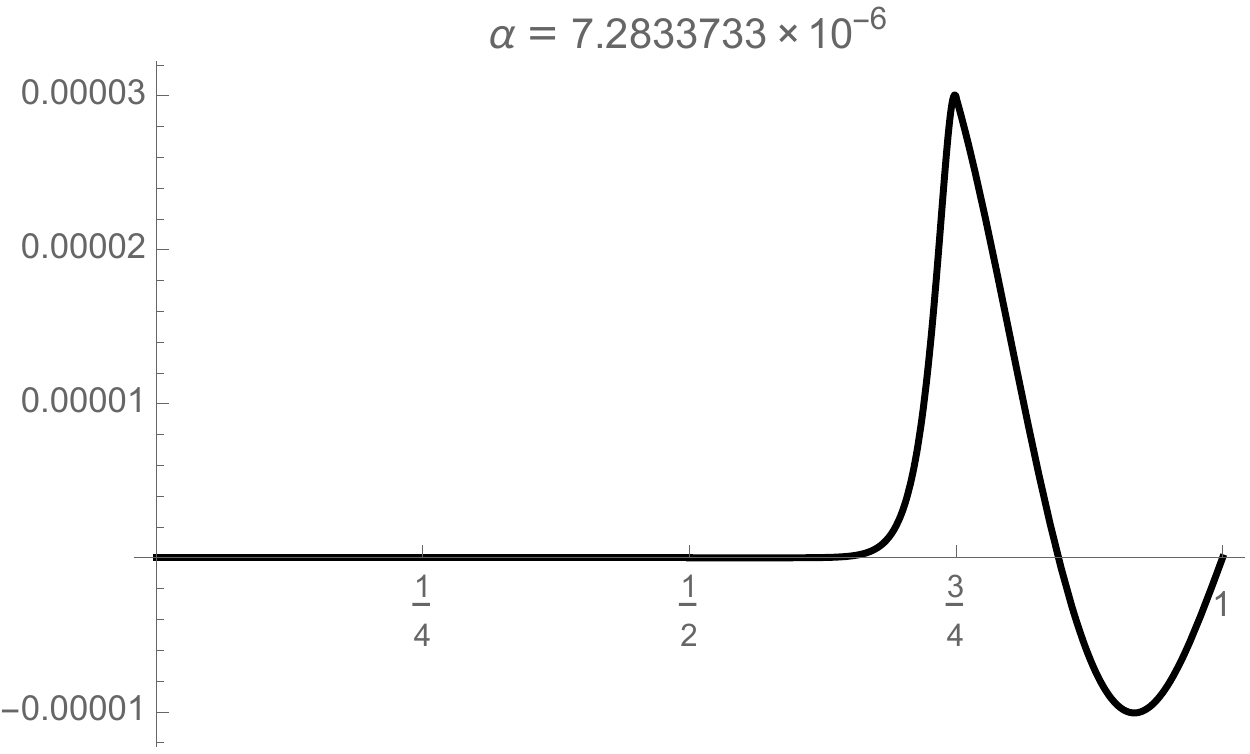}
        }
\caption{\label{FalseEx2} Plots of the first two eigenvectors of
  $\cL$, and of the real an imaginary parts of the first eigenvector
  of $\cL_s$ for different choices of $R$, and $s=1$.  See Example~\ref{FalseLocalizationExample2}.}
\end{figure}
 \begin{figure}
	\centering
	\subfloat[The first two eigenvectors of $\cL$.]
	{
          \includegraphics[width=0.45\textwidth]{Figures/FalseLocalizationX1.pdf}\quad
          \includegraphics[width=0.45\textwidth]{Figures/FalseLocalizationX2.pdf}
	}
\\
	\subfloat[The real (left) and imaginary of the first eigenpair
        of $\cL_s$ with $R=(0,1/4)\cup(3/4,1)$.]
	{
          \includegraphics[width=0.45\textwidth]{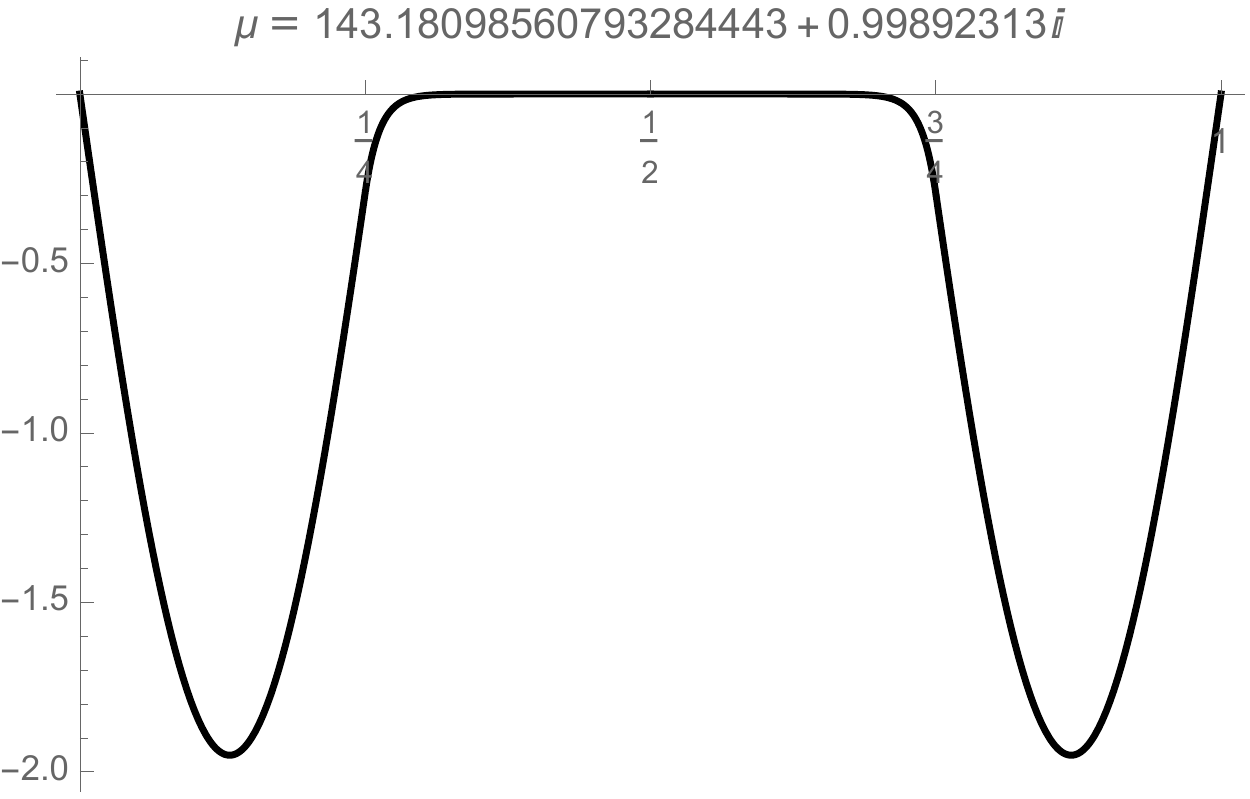}\quad
          \includegraphics[width=0.45\textwidth]{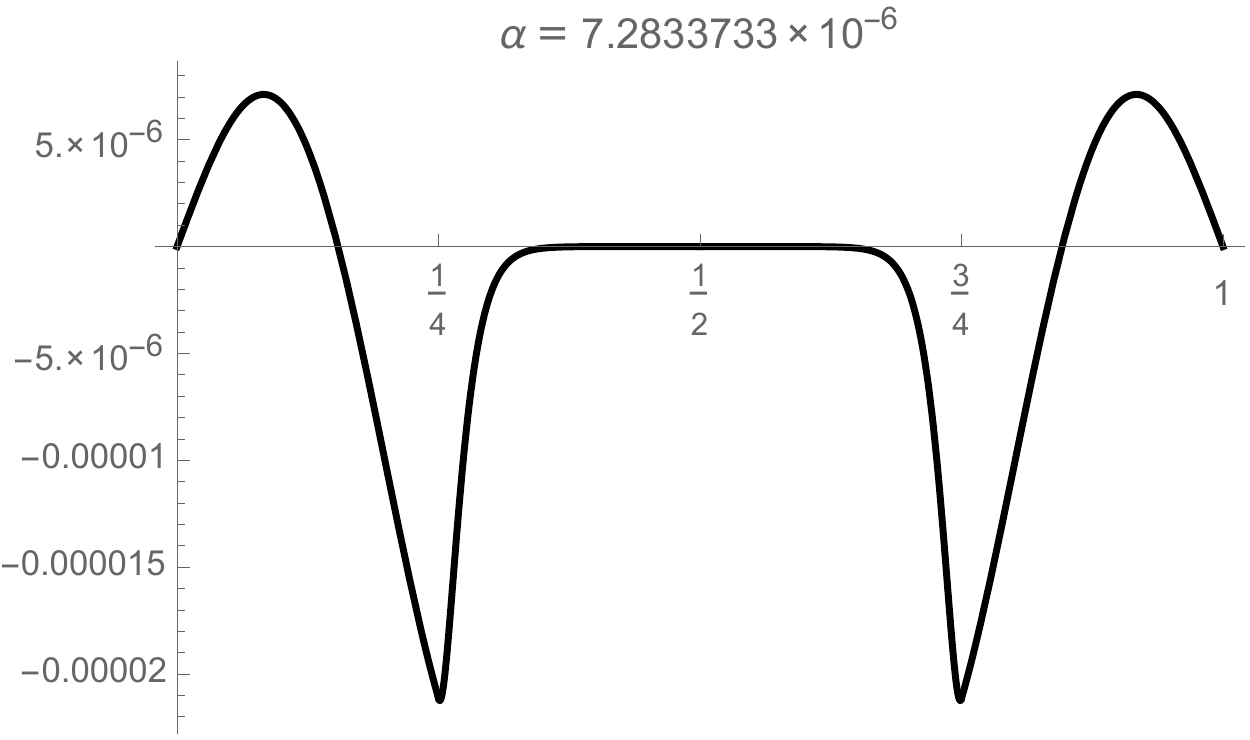}
        }
\\
	\subfloat[The real (left) and imaginary of the second eigenpair
        of $\cL_s$ with $R=(0,1/4)\cup(3/4,1)$.]
	{
          \includegraphics[width=0.45\textwidth]{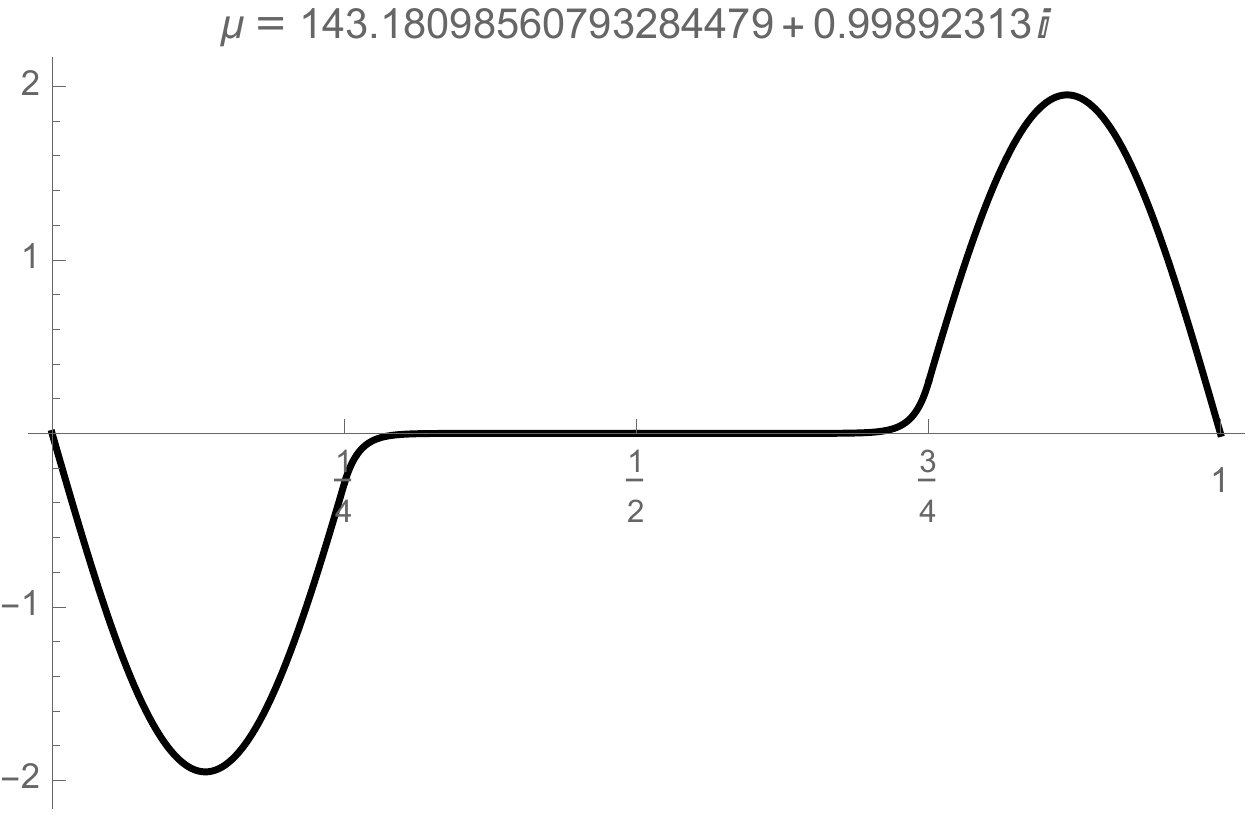}\quad
          \includegraphics[width=0.45\textwidth]{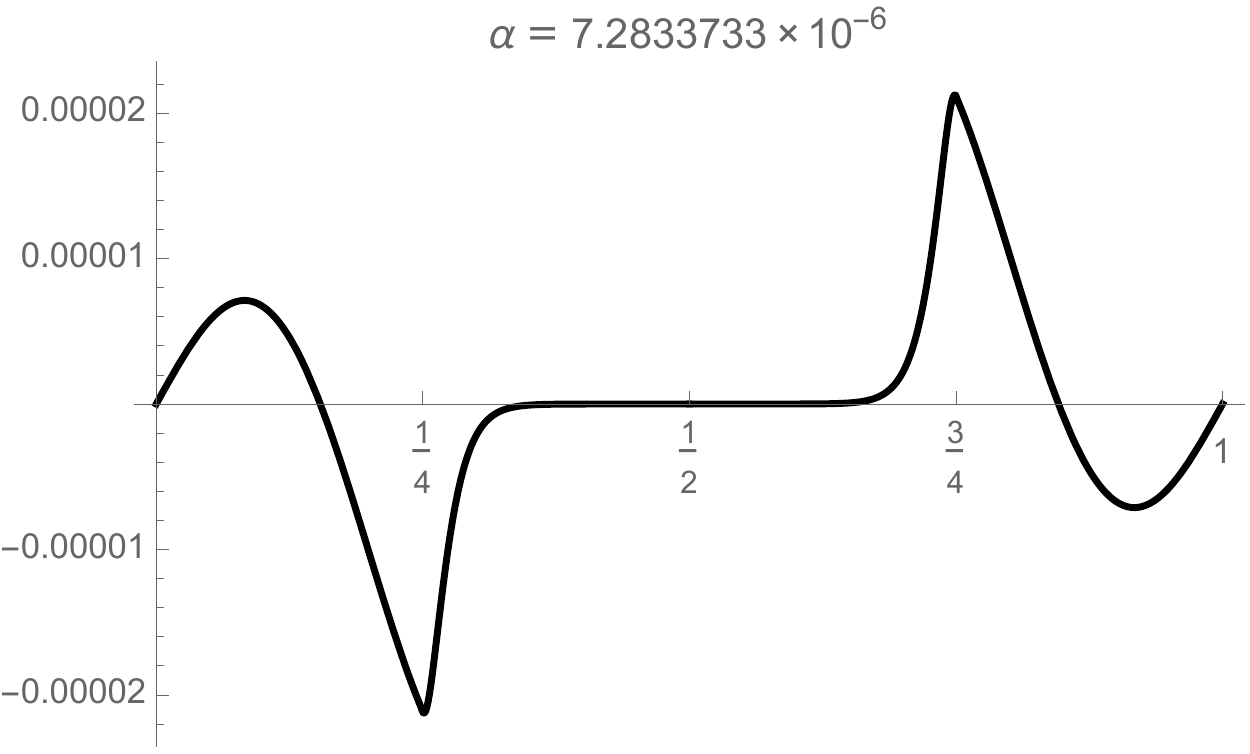}          
	}        
\caption{\label{FalseEx2B} Plots of the first two eigenvectors of
  $\cL$, and of the real an imaginary parts of the first eigenvector
  of $\cL_s$ for different choices of $R$, and $s=1$.  See Example~\ref{FalseLocalizationExample2}.}
\end{figure}

\begin{remark}\label{ShiftInComplement}
  As suggested by~\eqref{AltEigenvalueEstimate}, one could easily
  develop results analogous to those in this section but for which the
  roles of $R$ and $\Omega\setminus R$ are ``reversed''.  While still
  being concerned with localization in $R$, the complex-shifted operator
  would be defined instead as
  $\widetilde\cL_s=\cL+\ii\,s\chi_{\Omega\setminus R}$.  The
  analogue of the statement given at the beginning of this section
  is:
  \begin{quote}
  If $(\lambda,\psi)$ is an eigenpair of $\cL$ with $\psi$ highly
  localized in $R$, then there will be an eigenpair $(\mu,\phi)$ of
  $\widetilde\cL_s$ that is close to $(\lambda,\psi)$.
\end{quote}
The analogues
of~\eqref{EigenvalueCloseness} and~\eqref{EigenvalueCloseness3} in this case are
\begin{align}\label{AltEigenvalueCloseness}
\dist(\lambda\,,\,\Spec(\widetilde\cL_s))&\leq s\delta(\psi,R)
\quad,\quad
 \dist(\Re\mu,\Spec(\cL))\leq s\delta(\phi,R)\tau(\phi,R)~,
\end{align}
for eigenpairs $(\lambda,\psi)$ of $\cL$ and $(\mu,\phi)$ of
$\widetilde\cL_s$.  There are similarly predictable analogues of the
other results in this section.  A careful comparison of an approach
based on $\widetilde\cL_s$ with that based on $\cL_s$ is a topic of future
research.
\end{remark}

\end{example}

\section{An Algorithm Template for~\eqref{KeyTask}}\label{Template}
Guided by the results of Section~\ref{Theory}, we propose the algorithm
  template given in Algorithm~\ref{ELAT} for our fundamental task~\eqref{KeyTask}, which we restate here for convenience,
\begin{equation}
  \tag{T}\label{KeyTask2}
  \parbox{\dimexpr\linewidth-4em}{%
    \strut
    Given a subdomain $R$, an interval $[a,b]$ and a (small) tolerance $\delta^*>0$, find all
eigenpairs $(\lambda,\psi)$ for which $\delta(\psi,R)\leq \delta^*$ and $\lambda\in[a,b]$, or
determine that there are not any.%
    \strut
  }
\end{equation}
Practical realizations are obtained by choices made on lines~\ref{alg1}
and~\ref{alg2} of Algorithm~\ref{ELAT}, and we describe below
reasonable choices for each.
Tables~\ref{1DExampleTab} and~\ref{1DExampleTabB}, in which some 
eigenvalues of $\cL_s$ in $U$ corresponded to an eigenvector of $\cL$
that was not sufficiently localized in $R$, demonstrate why line~\ref{alg3} is needed.

\begin{algorithm}
\caption{Eigenvector Localization Template}\label{ELAT}
\begin{algorithmic}[1]
  \Procedure{Localize}{$a,b,s,\delta^*,R$}
       \State determine all eigenpairs $(\mu,\phi)$ of $\cL_s$ with $\mu\in U(a,b,s,\delta^*)$\label{alg1}
       \Comment{Corollary~\ref{SearchRegionCor}}
       \If{no eigenvalues are found in \ref{alg1}}
       \State exit
       \Comment{There are no eigenpairs $(\lambda,\psi)$ of $\cL$ with
         $\lambda\in [a,b]$ and $\delta(\psi,R)\leq\delta^*$}
       \Else
       \For{each eigenpair $(\mu,\phi)$ found in \ref{alg1}}
       \State $\phi\longleftarrow c\phi$ \label{alg1rot}
       \Comment{Normalize $\phi$, Remark~\ref{EigenvectorRotation}}
       \State post-process $(\Re\mu,\Re\phi)$ to obtain (approximate)
       eigenpair $(\tilde\lambda,\tilde\psi)$ of $\cL$ \label{alg2}
       \If{$\delta(\tilde\psi,R)\leq\delta^*$}\label{alg3}
       \State accept $(\tilde\lambda,\tilde\psi)$
       \EndIf
       \EndFor
       \EndIf
       \State \textbf{return} accepted (approximate) eigenpairs $(\tilde\lambda,\tilde\psi)$
\EndProcedure
\end{algorithmic}
\end{algorithm}

We first consider line~\ref{alg1} of Algorithm~\ref{ELAT}.  There are
several classes of methods that are designed for finding eigenpairs
$(\mu,\phi)$ (or just eigenvalues) of an operator, with $\mu$ in some
user-specified region $\widetilde{U}\subset\CC$, which we will assume
is simply connected and has a (piecewise) smooth boundary
$\gamma=\partial\widetilde U$.  We mention methods that are
based on associated contour integrals, and classify them into four
categories: Sakurai-Sugiura methods (SS, CIRR)
(cf.~\cite{Sakurai2003,Sakurai2007,Imakura2014,Ikegami2010,Yokota2013,Austin2015}),
FEAST methods (cf.~\cite{Polizzi2009,Tang2014,Kestyn2016,Yin2019a,Ye2017,Gopalakrishnan2019,Gopalakrishnan2020,Horning2020}), Beyn methods (cf.~\cite{Beyn2011,Beyn2012,Kleefeld2013,Beyn2014,VanBarel2016}), and Spectral
Indicator Methods (RIM, SIM)
(cf.~\cite{Huang2016,Huang2018,Liu2019,Xiao2020}).  Unlike the other
three approaches, Spectral Indicators Methods do not involve the
approximate solution of eigenvalue problems, and yield only eigenvalue approximations.

As we use the FEAST approach for our experiments in Section~\ref{Experiments}, we
provide a brief high-level description of how it works for a normal (or
selfadjoint) operator $\cA$ having compact resolvent, such as $\cL_s$ or $\cL$.  Although we
are primarily concerned with applying FEAST to the normal operator
$\cL_s$, we describe it first for selfadjoint $\cA$, and then indicate
how it can be made applicable to normal operators.  Suppose that
$f=f(z)$ is a rational function that is bounded on $\Spec(\cA)$.  Then
$\cB\doteq f(\cA)$ is a bounded (normal) operator on
$\Dom(\cB)\doteq\Dom(\cA)$, and if $(\lambda,\psi)$ is an eigenpair of
$\cA$, then $(f(\lambda),\psi)$ is an eigenvector of $\cB$.  We
emphasize that the eigenvectors of $\cA$ and $\cB$ are the same!
Now suppose that the open set $\widetilde{U}$ contains some finite
subset $\Lambda\subset\Spec(\cA)$ and that the contour
$\gamma=\partial\widetilde{U}$ does not intersect $\Spec(\cA)$.  The
rational function $f$ is then chosen as an approximation of the
characteristic function for $\widetilde{U}$, $f(z)\approx
\chi_{\widetilde{U}}(z)=\frac{1}{2\pi\,\ii}\oint_{\gamma}(\xi-z)^{-1}\,d\xi$.
This rational approximation is often obtained from a quadrature
approximation of this Cauchy integral, taking the form
$f(z)=\sum_{k=0}^{n-1}w_k(z_k-z)^{-1}$, but there are other ways of
obtaining such a rational function (cf.~\cite{Guettel2015,VanBarel2016a}).
It follows that $\cB$ approximates (in some sense) the spectral
projector $S$ for $\cA$ associated with $\Lambda$,
i.e. $S=\chi_{\widetilde{U}}(\cA)=\frac{1}{2\pi\,\ii}\oint_{\gamma}(\xi-\cA)^{-1}\,d\xi$.
We have $E\doteq\mbox{Range}(S)=E(\Lambda,\cA)$, which is the target
invariant subspace.  FEAST is based on subspace iteration using the
``filtered operator'' $\cB$:  starting with a random
finite-dimensional subspace
$E_0\subset\Dom(\cA)$ that satisfies $SE_0=E$, the iteration generates
a sequence of subspaces $E_{k+1}=\cB E_k$ that converge to $E$ with
respect to subspace gap.  A Rayleigh-Ritz procedure is used on a
finite rank operator $\cA_k:\,E_k\to E_k$ to obtain approximations
$\Lambda_k$ that converge to $\Lambda$ in the Hausdorff metric, and a
natural by-product of this procedure is that an orthonormal basis of
$E_k$ is obtained.   More specifically,
$\cA_k=P_{k}\cA_{\vert_{E_k}}$, where $P_k$ is the orthogonal
projector onto $E_k$, and $\Lambda_k=\Spec(\cA_k)$.  The rate of
convergence is governed by the ratio
\begin{align}\label{FilterContraction}
\kappa\doteq\frac{\sup_{\lambda\in \Spec(\cA)\setminus\Lambda}|f(z)|}{\inf_{\lambda\in \Lambda}|f(z)|}<1~,
\end{align}
so a good ``filter function'' $f$ for the region $\widetilde{U}$
should decay rapidly (in modulus) away from $\widetilde{U}$, and ideally not vary
too much within $\widetilde{U}$.  For non-selfadjoint (normal) $\cA$,
approximations $E_k$ and $E_k^*$ of the right and left invariant
subspaces $E$ and $E^*$ of $\cA$ are obtained using subspace iteration
with $\cB$ and its adjoint $\cB^*$, with some variations on how to
extract eigenvalue approximations and maintain well-conditioned bases
of  $E_k$ and $E_k^*$ (cf.~\cite{Kestyn2016,Yin2019a}).

It is convenient to use
\begin{align}\label{BunimovichStadium}
  \widetilde{U}=\widetilde{U}(a,b,s,\delta^*)=\{z\in\CC:\,\dist(z,L)\leq
  s\delta^*\}\quad,\quad
  L=[a,b]+\ii\,s~,
\end{align}
for the search region.  Recall that the eigenvalues of $\cL_s$ that
are of interest are in its lower-half, $U$, which is pictured in Figure~\ref{SearchRegionFig}.
The region $\widetilde{U}$, when viewed as a domain in $\RR^2$, is often called a \textit{Bunimovich
  stadium} in the context of quantum billiards, where it serves as a
popular example (cf.~\cite{Burq2005}).  We will refer to $\gamma=\partial\widetilde{U}$
as a \textit{Bunimovich curve}.  A unit-speed parameterization of $\gamma$ that traverses it
counter-clockwise, starting at the point $b+\ii(s-r)$, is given by
$z(t)=x(t)+\ii\,y(t)$, where
\begin{align}\label{Parameterization}
  (x(t),y(t))=\begin{cases}
    r(\sin(\frac{t}{r}), -\cos(\frac{t}{r}))+(b,s)&,\, 0\leq t\leq t_1\\
    (b+t_1-t,s+r)&,\,t_1\leq t\leq t_2\\
    r(\sin(\frac{t+a-b}{r}),-\cos(\frac{t+a-b}{r}))+(a,s)&,\,t_2\leq
    t\leq t_3\\
    (a-t_3+t,s-r)&,\, t_3\leq t\leq P
    \end{cases}~,
\end{align}
and
\begin{align}\label{Parameters}
  r=s\delta^*\;,\; t_1=\pi r\;,\;t_2=t_1+b-a\;,\;t_3=t_2+\pi r\;,\; P= 2\pi r+2(b-a)~.
\end{align}
The parameterization is made $P$-periodic by setting
$z(t+P)=z(t)$.  The $n$-pole rational filter function
associated with $\gamma$ is obtained by applying the trapezoid rule to
$\frac{1}{2\pi\,\ii}\oint_{\gamma}(\xi-z)^{-1}\,d\xi$,
\begin{align}\label{BunimovichFilter}
  f(z)=\sum_{k=0}^{n-1}w_k(z_k-z)^{-1}\quad,\quad z_k=z(k h+\pi
  r/2)\quad,\quad w_k=\frac{h z'(k h+\pi
  r/2)}{2\pi\,\ii}\quad,\quad h=\frac{P}{n}~.
\end{align}
The offset of $\pi r/2$ in the definition of the quadrature points
(poles) $z_k$ and weights $w_k$ provides a more symmetric distribution
of these points.  An example Bunimovich curve, overlaid with the poles
of $f(z)$, is given in Figure~\ref{BunimovichFilterFig}, together with
a contour plot of $|f(z)|$ that illustrates its effectiveness in
distinguishing between points inside $\gamma$ from those outside.
The thick red curve in the contour plot is the portion of $\gamma$ for
which $\Im z\leq s$---recall that eigenvalues $\mu$ of $\cL_s$ satisfy
$\Im\mu <s$.
The thick black curves in the contour plot are the contours
$|f(z)|=2^{j}$, $-8\leq j\leq 2$, with the curve for $|f(z)|=2^{-8}$
being farthest from $\gamma$.  These indicate the desired rapid decay
of $|f(z)|$ away from $\widetilde U$.
\begin{figure}
	\centering
	\subfloat[Bunimovich curve $\gamma=\partial \widetilde U$ and poles (quadrature points) of $f(z)$.]
	{
          \includegraphics[width=0.45\textwidth]{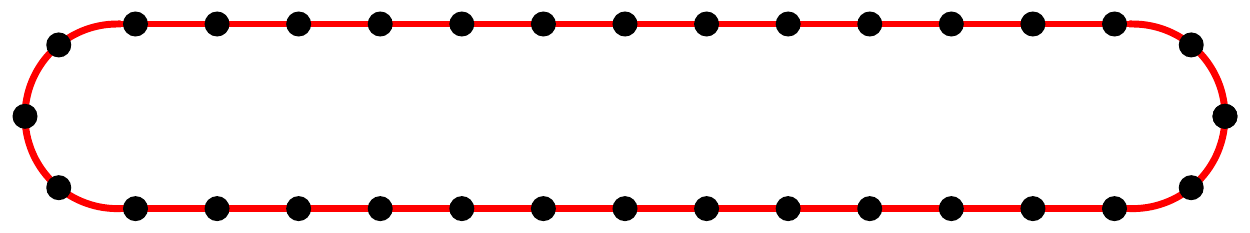}\quad
	}
\quad
	\subfloat[Contour plot of $|f(z)|$ for $a-2r\leq \Re z\leq
        b+2r$ and $s-2r\leq \Im z\leq s$.
         The red curve is part of $\gamma=\partial \widetilde U$.]
	{
          \includegraphics[width=0.45\textwidth]{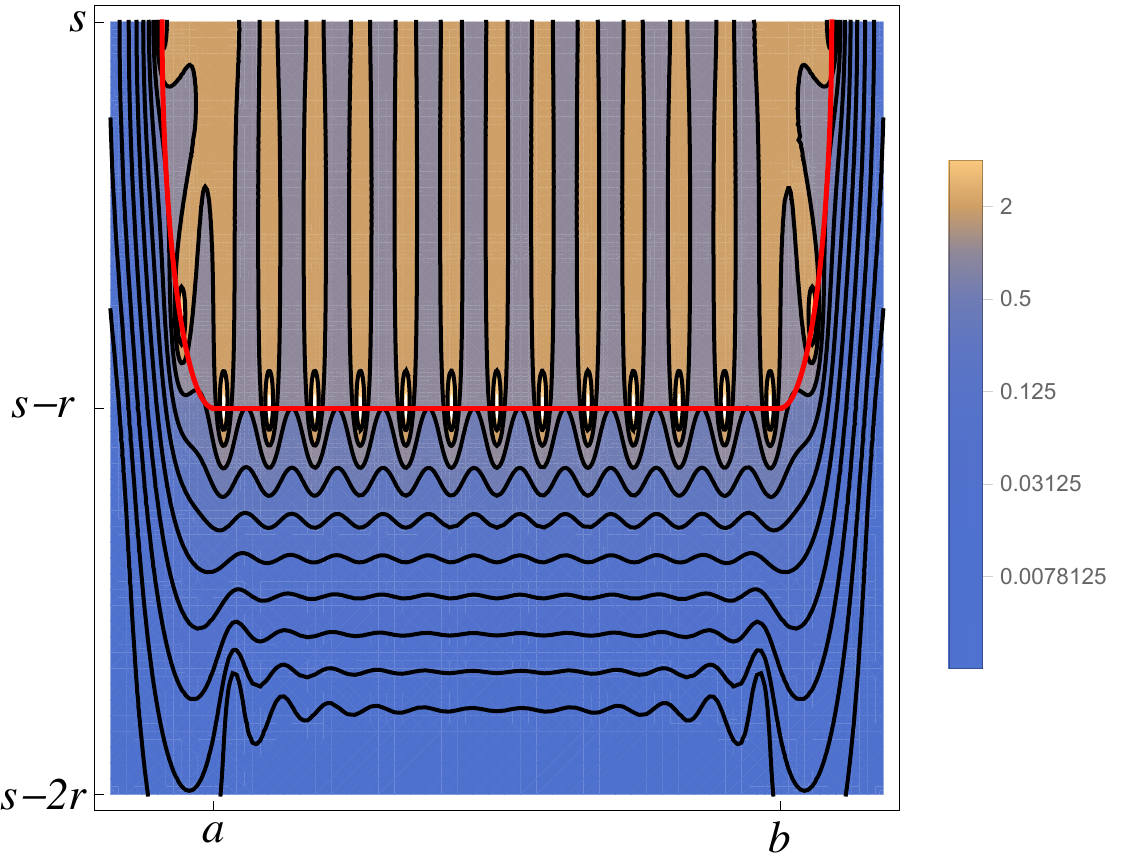}
	}
        \caption{\label{BunimovichFilterFig} Poles and contour plot of
          $|f(z)|$ for the rational filter $f(z)$ associated with
          $\widetilde U=\widetilde U(a,b,s,\delta^*)$ for $(a,b,s,\delta^*)=(-4,18,10,1/5)$; $n=32$ equispaced points are used for the filter.}
      \end{figure}

We now consider the post-processing step on line~\ref{alg2}.   As
suggested by Theorem~\ref{KeyTheorem2} and
Proposition~\ref{ResidualBound}, the pair
$(\mu_1,\phi_1)=(\Re\mu,\Re\phi)$ is often a decent starting point for
finding an eigenpair $(\lambda,\psi)$ of $\cL$ with $\lambda$ ``close'' to
$\mu_1$; recall that $|\mu_1-\lambda|\leq
s\delta(\phi,R)\tau(\phi,R)$.  A reasonable post-processing procedure
consists of a few inverse iterations, see
Algorithm~\ref{PostProcess}.
\begin{algorithm}
\caption{Approximate Eigenpair Post-Processing}\label{PostProcess}
\begin{algorithmic}[1]
  \Procedure{Post-process}{$\mu_1,\phi_1,\mathrm{tol}$}
  \State $\tilde\lambda\longleftarrow\mu_1$
  \State $\tilde\psi\longleftarrow \phi_1/\|\phi_1\|_{L^2(\Omega)}$ 
  \While{$\|\cL
    \tilde\psi-\tilde\lambda\tilde\psi\|_{L^2(\Omega)}>\mathrm{tol}$}\label{ResidualCheck}
  \State $\tilde\psi\longleftarrow (\mu_1-\cL)^{-1}\tilde\psi$\label{InverseIteration}
  \State $\tilde\psi\longleftarrow
  \tilde\psi/\|\tilde\psi\|_{L^2(\Omega)}$
  \State $\tilde\lambda\longleftarrow (\cL\tilde\psi,\tilde\psi)$
  \EndWhile
  \State \textbf{return} post-processed (approximate) eigenpair $(\tilde\lambda,\tilde\psi)$
  \EndProcedure
\end{algorithmic}
\end{algorithm}
One might instead opt for Rayleigh quotient iterations, which replace
$\mu_1$ with the current approximation $\tilde\lambda$ on
line~\ref{InverseIteration} of Algorithm~\ref{PostProcess}.  However, it is expected that few 
iterations will be needed, so requiring the action of only one inverse is perhaps
more attractive.  We note that Proposition~\ref{ResidualBound} allows
for the efficient computation of the initial residual, which may
already be below the prescribed tolerance, resulting in no inverse iterations.  In practice, one might use a
more readily computable proxy for the residual norm.

In Remark~\ref{FalsePositivePP} below, we highlight a potential danger
of relying on the post-processing procedure in
Algorithm~\ref{PostProcess} as stated in situations in which $\mu_1=\Re\mu$ is
close to an eigenvalue of $\cL$, but $\phi_1=\Re\phi$ is not close to
an eigenvector despite the fact that
$\|\cL\phi_1-\mu_1\phi_1\|_{L^2(\Omega)}$ is reasonably small.
Example~\ref{FalseLocalizationExample2} was chosen precisely to
illustrate how such scenarios could arise.  While we do not expect
such situations to be common enough to reject
Algorithm~\ref{PostProcess} as a viable option, it is useful to consider
possible variants that are likely to be more robust in
such situations.  One such variant is to first estimate (as efficiently as
possible) how many
eigenvalues of $\cL$ are ``near'' $\mu_1$ (cf.~\cite{DiNapoli2016,Yin2019b}),
as this has a direct affect
on the convergence rate of inverse iteration.  Recall that
we are guaranteed that there is at least one eigenvalue of $\cL$ that
is within $s\delta(\phi,R)\tau(\phi,R)$ of $\mu_1$, so we might consider a
slightly larger interval around $\mu_1$ for our eigenvalue count
estimate.  If the approach estimates $m$ eigenvalues of $\cL$ near
$\mu_1$, then inverse iteration would be performed using a subspace of
size at least $m$.  After extracting (approximate) eigenpairs, each
would be tested for its localization in $R$, as in lines 9-11 of Algorithm~\ref{ELAT}.
A more in-depth discussion of variants of the post-processing algorithm will
be postponed for subsequent work.

\begin{remark}[Possible False Positives from
  Post-Processing]\label{FalsePositivePP}
  As was demonstrated in Example~\ref{FalseLocalizationExample2}, it
  is possible for  $\|\cL
  \tilde\psi-\tilde\lambda\tilde\psi\|_{L^2(\Omega)}$ to be relatively
  small without $(\tilde\lambda,\tilde\psi)$ being close to an eigenpair of
  $\cL$.  In that example, we would have $\|\cL
  \tilde\psi-\tilde\lambda\tilde\psi\|_{L^2(\Omega)}=7.27553\times
10^{-6}$ for the initial check on line~\ref{ResidualCheck}.  If the
tolerance in Algorithm~\ref{PostProcess} was chosen larger than this, no inverse
iterations would be performed, and the procedure would return its
input, which is not close to an eigenpair of $\cL$ and would falsely
indicate an eigenvector of $\cL$ that is localized in $R=(0,1/4)$.  Setting a
smaller tolerance in this case will force at least one inverse iteration,
but the question of when the tolerance is small enough to be
considered ``safe'' for the types of problems of interest is a subtle
one.  In fact, by increasing the constant $V$ in
Example~\ref{FalseLocalizationExample2}, the initial residual $\|\cL
  \tilde\psi-\tilde\lambda\tilde\psi\|_{L^2(\Omega)}$ can be made
  arbitrarily small, so no tolerance would seem safe.  For
  Example~\ref{FalseLocalizationExample2} with $V=80^2$ (as was used
  in that example),  the form of
  $(\cL-\mu_1)^{-1}\tilde\psi$ is known in advance---linear
  combinations of (regular and/or hyperbolic) sines and cosines having
  known frequencies on each subinterval---so inverse iterations can be
  carried out by solving linear systems that enforce the boundary
  conditions and the continuity of the function and its derivative
  across subintervals.  After performing the first inverse iteration, and
  renormalizing, the resulting function hardly differs from its
  predecessor at all---the maximal pointwise difference between the
  two functions is on the order of $10^{-9}$---which suggests that
  {\bf many} inverse iterations would be required (in essentially
  exact arithmetic!) before the iterates began to reasonably
  approximate the true eigenvector.  The extremely slow convergence of
  inverse iteration in this case is expected, due to the fact that
  there are two eigenvalues of $\cL$ that are extremely close to
  $\mu_1$ (and to each other).  Since the next nearest eigenvalues of
  $\cL$ are much farther away from $\mu_1$, performing inverse
  iteration with a two-dimensional subspace, (re)orthogonalizing its basis
  as needed, will lead to much more rapid convergence, from which
  (approximate) eigenpairs can be easily extracted using a
  Rayleigh-Ritz procedure.  The initial basis might be chosen
  randomly, or one might choose $\tilde\psi=\phi_1/\|\phi_1\|_{L^2(\Omega)}$ as one of the two basis
  functions and the other to be the normalized version of the orthogonal
  complement $u^\perp$ of the
  landscape function $u$ with respect to $\phi_1$, i.e.
  $u^\perp=u-(u,\tilde\psi)\,\tilde\psi$.  In this case, the ``clever''
  choice of initial basis leads to very few inverse iterations before
  approximate eigenpairs very close to those given in
  Figure~\ref{FalseEx2}(A) are obtained by the Rayleigh-Ritz
  procedure.   From these two, one can then deduce that there are
  no eigenvectors of $\cL$ whose eigenvalues are near $\mu_1$ that are
  localized in $R=(0,1/4)$, though one can see that they are localized
  in $R=(0,1/4)\cup(3/4,1)$.
\end{remark}

\begin{remark}\label{SIMVariant}
  As indicated near the beginning of this section, an approach such as SIM,
  which yields only eigenvalue approximations, might be used in line 2
  of Algorithm~\ref{ELAT}.  In this case, line 7 is clearly
  irrelevant, and the post-processing phase would proceed with only
  eigenvalue approximations.  Since inverse iteration is used, and can
  proceed with random initialization, such an approach is feasible.
  The potential reduction in cost by using SIM on line 2 might make up
  for the potential increase in cost of using inverse iteration with
  \textit{random} initialization, as opposed to the (likely) better
  initialization obtained from methods that return eigenpairs in line 2.
\end{remark}

Up to this point, we have not discussed how to choose the parameter
$s$ in $\cL_s$.  Example~\ref{FalseLocalizationExample1} illustrates
that choosing $s$ ``too large'' can introduce false indicators of
localization that would later have to be recognized and
rejected, but how large is ``too large'' in terms of producing false
indicators may be problem-dependent, as can be seen in
Examples~\ref{1DExample} and~\ref{FalseLocalizationExample2}.  In the
first of these examples, choosing $s=1$ or $s=100$ had very little
practical effect on the how well eigenpairs $(\mu,\phi)$ of $\cL_s$
with $\mu\in U(a,b,s,\delta^*)$ served as predictors of eigenpairs
$(\lambda,\phi)$ of $\cL$ for which $\lambda\in[a,b]$ and
$\delta(\phi,R)\leq \delta^*$.
Example~\ref{FalseLocalizationExample2}, which was specifically
designed to yield false indicators of localization even when $s$ is
small, probably should not be weighed so heavily in coming up with
practical guidance about how to choose $s$, but nonetheless
illustrates that the quality of localization indicators coming from
line~\ref{alg1} of Algorithm~\ref{ELAT} can be quite sensitive to the
choice of $s$ for certain problems.  Given the localization tolerance
$\delta^*$, there is some theoretical appeal to choosing $s$ so that
$r=s\delta^*\leq 1$, because it makes it easier for the bounds in
eigenvector results such as~\eqref{EigenvectorCloseness}
and~\eqref{EigenvectorCloseness2} to be meaningful (i.e. smaller than
$1$).  However, Example~\ref{1DExample} again shows that such a
restriction is not necessary.  It may be that a ``good'' choice of
$s$, in relation to $a$, $b$ and $\delta$, is dictated largely by
practical efficiency considerations concerning the method used for
computing eigenpairs $(\mu,\phi)$ of $\cL_s$ with
$\mu\in U(a,b,s,\delta^*)$.  For example, with the FEAST approach used
above, the rational filter $f(z)$ determines how rapidly its
iterations converge.  We recall that the key issue is the contrast
between $|f(\mu)|$ for $\mu\in U=U(a,b,s,\delta^*)$ and
$\mu\in \Spec(\cL_s)\setminus U$; the greater the contrast, the fewer
number of iterations are needed.  For fixed $(a,b,s,\delta^*)$,
increasing the number $n$ of poles in $f$
(see~\eqref{BunimovichFilter}) will reduce the number of iterations
needed, but increase the cost per iteration.  For a fixed $n$, the
aspect ratio of $U$ (equivalently $\widetilde U$) affects the quality of the
filter.  In Figure~\ref{BunimovichFilterFig},  $n=32$ poles produces a
very nice contrast for $(a,b,s,\delta^*)=(-4,18,10,1/5)$; the aspect
ratio $((b-a)+2r)/(2r)$ is $6.5$ in this case.  However,
changing only $s$ from $10$ to $1$ increases the aspect ratio to $56$,
and destroys the quality of the
filter.  We do not picture the poor filter here, but mention that
$|f(z)|\geq 1/2$ for some $z$ with $\Im z=s-2r$; the filter when
$s=10$ satisfied $|f(z)|< 2^{-7}$ (typically even smaller) for all such $z$.  Now, taking
$(a,b,s,\delta^*)=(-4,-9/5,1,1/5)$ restores the aspect ratio of $U$ to
$6.5$, and $|f(z)|$ looks precisely as that pictured in
Figure~\ref{BunimovichFilterFig}.  We note that the only change
between this and the ``poor filter'' situation is that $b$ was changed
to restore the original aspect ratio of  $6.5$.  For a fixed
$U(a,b,s,\delta^*)$ (and $n$), the situation can be improved by
subdividing $[a,b]$, $[a,b]=[a_0,a_1]\cup\cdots\cup [a_{p-1},a_p]$ for
some $p\in\NN$, where $a_j=a+j(b-a)/p$.  Each subregion
$U(a_j,a_{j+1},s,\delta^*)$, which will have a smaller aspect ratio
than $U(a,b,s,\delta^*)$, can be searched independently (in
parallel) for eigenvalues of $\cL_s$.  In light of the discussion
above, it appears that offering practical guidance for choosing $s$
may require significant experimentation, so
we postpone such judgments to later work that is more computationally focused.

\begin{remark}[Alternate Approach Using Bounded Operator]\label{AlternativeApproach}
  As indicated above, when $b-a$ is very large, the search region
  $\widetilde{U}(a,b,s,\delta^*)$ for eigenvalues of $\cL_s$ will
  typically have a very large aspect ratio, which would necessitate
  subdivision
  $\widetilde{U}(a,b,s,\delta^*)=\bigcup\{\widetilde{U}(a_{j},a_{j+1},s,\delta^*):\,0\leq
  j\leq p-1\}$.  Since each subregion
  $\widetilde{U}(a_{j},a_{j+1},s,\delta^*)$ can be explored
  independently and in parallel, large regions can be efficiently
  explored in practice when parallel computing is available.
However, a different approach might
be used that requires only a single search region even if one wants to
test all eigenvectors of $\cL$ for localization in $R$---assuming that
there are only finitely many linearly independent eigenvectors of $\cL$ that are localized in
$R$ to within a given tolerance (see Remark~\ref{InfiniteLocalization}
for a counterexample).  

Suppose that $\bb>0$ is a known or computed lower bound on
$\Spec(\cL)$.  The eigenvectors of $\cM=\bb\,\cL^{-1}$ are precisely
those of $\cL$, and we have $\Spec(\cM)\subset(0,1]$.  If we define
$\cM_s=\cM+\ii\,s\,\chi_R$, then
$\Spec(\cM_s)\subset (0,1]+\ii\,(0,s)$, and the obvious analogues of
the results in Section~\ref{Theory} hold for $\cM$ and $\cM_s$, as
they did for $\cL$ and $\cL_s$.  Now, instead of exploring potentially
many regions $\widetilde{U}(a_{j},a_{j+1},s,\delta^*)$ for eigenvalues
of $\cL_s$, we explore a single region
$\widehat{U}(s,\delta^*)=[0,1]+\ii[s(1-\delta^*),s]$ for eigenvalues
of $\cM_s$.  Of course, this assumes that $\widehat{U}(s,\delta^*)$
contains only finitely many eigenvalues of $\cM_s$ counting multiplicities.  The associated
filtered operator $\cB=f(\cM_s)$ for FEAST iterations has the form
\begin{align}
\cB=\sum_{k=0}^{n-1}w_k(z_k-\cM_s)^{-1}=\cL\sum_{k=0}^{n-1}w_k((z_k-\ii\,s\,\chi_R)\cL-\bb)^{-1}~.
\end{align}
A more thorough investigation of theoretical and practical
considerations related to such an approach is intended for future
work, though we note here that one must still contend with issues of
discretization related to resolving highly oscillatory eigenvectors.
\end{remark}

\begin{remark}\label{InfiniteLocalization}
  Although we expect that, when $R$ is a relatively small subdomain of
  $\Omega$, and $\delta^{*}<1/2$, there will typically be only
  finitely many eigenvectors $\psi$ of $\cL$ that satisfy
  $\delta(\psi,R)\leq \delta^*$, this need not be the case.  For
  example, let $\Omega$ be the unit disk and $\cL=-\Delta$.  The
  eigenvalues are known to be $\lambda_{m,n}=[j_n(m)]^2$, for $m\geq
  1$ and $n\geq 0$, where $j_{\sigma}(m)$ is the $m$th positive root of
  the first-kind Bessel function $J_\sigma$.   The corresponding
  eigenspaces, expressed in polar coordinates, are
  \begin{align*}
    E(\lambda_{m,n},\cL)=\mathrm{span}\{J_n(j_n(m)r)\sin(n\theta)\,,\, J_n(j_m(m)r)\cos(n\theta)\}=\mathrm{span}\{\psi^{(0)}_{m,n}\,,\, \psi^{(1)}_{m,n}\}~.
  \end{align*}
  Given $r^*\in (0,1)$, we consider the annulus $R$ for which
  $r^*<r<1$ ($0\leq\theta<2\pi$).  For $n\geq 1$, we have
  $\|\psi_{m,n}^{(k)}\|^2_{L^2(\Omega)}=
  (\pi/2)[J_{n+1}(j_n(m))]^2$, and
  \begin{align*}
    [\delta(\psi_{m,n}^{(k)},R)]^2=\frac{(r^*)^2([J_{n}(j_n(m) r^*)]^2- J_{n-1}(j_n(m) r^*) J_{n+1}(j_n(m)) r^*)}{[J_{n+1}(j_n(m) r^*)]^2}~.
  \end{align*}
  It can be shown that $\delta(\psi_{1,n}^{(k)},R)\to 0$ as
  $n\to\infty$, regardless of the choice of $r^*$, which is a
  way of quantifying the statement that the eigenvectors
  $\psi_{1,n}^{(k)}$ concentrate near the boundary of $\Omega$ for
  large $n$.  So we see that, even if $r^*$ is near $1$, so $R$ is
  relatively small compared to $\Omega$, and $\delta^*$ is small,
  there will be infinitely many linearly independent eigenvectors $\psi$ of $\cL$
  satisfying $\delta(\psi,R)\leq \delta^*$.
\end{remark}

\section{A Partial Realization of Algorithm~\ref{ELAT}}\label{Experiments}
An implementation of the FEAST algorithm that uses finite element
methods to discretize the associated operators is described
in~\cite{Gopalakrishnan2020} (see also~\cite{Gopalakrishnan2019}).
Corresponding code, Pythonic FEAST~\cite{Gopalakrishnan2017}, builds on the general
purpose finite element software package NGSolve~\cite{Schoeberl2014,Schoeberl2021}, and
provides a convenient user interface in Python.  Recent modifications
to Pythonic FEAST allow for normal operators such as $\cL_s$, which we
use to illustrate a partial realization of Algorithm~\ref{ELAT}.  In
this realization, the
computation (approximation) of eigenpairs of $\cL_s$ whose eigenvalues
are in the search region $U(a,b,s,\delta^*)$, and the renormalization
of the eigenvectors, is performed---up through
line~\ref{alg1rot} of Algorithm~\ref{ELAT}.  These computations
provide likely candidates for associated localized eigenvectors of
$\cL$,  to be obtained through post-processing and then finally
accepted or rejected based on the tolerance $\delta^*$.
The post-processing is not automated here, and we
instead rely on visual comparison and experience to determine the
eigenpairs of $\cL$ that correspond to those computed for $\cL_s$.
The final accept/reject decision for these eigenvectors of $\cL$ is
clear based on their $\delta$-values.

To illustrate the implementation, we have chosen an example for which
localization is due to domain geometry, as opposed to coefficients in
the differential operator.  The operator is $\cL=-\Delta$, with
homogeneous Dirichlet boundary conditions, and the domain $\Omega$
consists of three squares, one $4\times 4$, one
$3\times 3$ and one $2\times 2$, joined by two $2\times 1$ rectangular
``bridges'', as shown in Figure~\ref{BaselineComparison}.
We refer to $\Omega$ as the ``three bulb'' domain, and to each of the
squares as ``bulbs''.  Many simple constructions such as this could be
chosen to yield localization of some eigenvectors (cf.~\cite{Delitsyn2012,Grebenkov2013}), and
this domain was chosen because localization of eigenvectors in each of
the three bulbs occurs (multiple times) early in the spectrum.  The
eigenvalue problems were discretized using quadratic finite elements
on a fixed
(relatively fine) quasi-uniform triangular mesh having maximal edge
length 0.1, resulting a a finite element space of dimension 15493.

To provide a baseline for comparison, we computed the first 71
eigenpairs for this discretization of $\cL$, whose (discrete)
eigenvalues are in the range $(1.22,33.30)$.  Contour plots of the
first sixteen eigenvectors are given in Figure~\ref{BaselineComparison},
and exhibit instances localization in each of the three bulbs.
\begin{figure}
\caption{\label{BaselineComparison} Contour plots of the first sixteen
computed eigenvectors for the three bulb domain.  Computed eigenvalues are
given for each, as well as
localization measures for the bulb in which the
eigenvector is most localized: $\tau_\ell$ for left bulb, $\tau_r$ for
right bulb, and $\tau_m$ for middle bulb.}
\centering
\begin{tabular}{|c|c|c|c|}\hline
$\lambda=1.2297\,,\,\tau_\ell=0.9999$&$\lambda=2.1714\,,\,\tau_r=0.9997$&
$\lambda=3.0528\,,\,\tau_\ell=0.9994$&$\lambda=3.0842\,,\,\tau_\ell=1.0000$\\
\includegraphics[width=0.2\textwidth]{./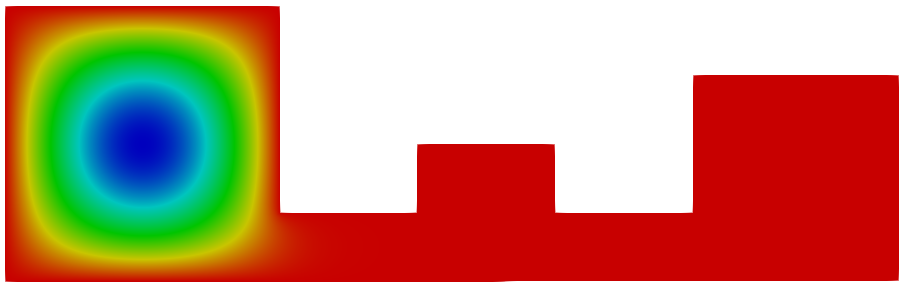}&
\includegraphics[width=0.2\textwidth]{./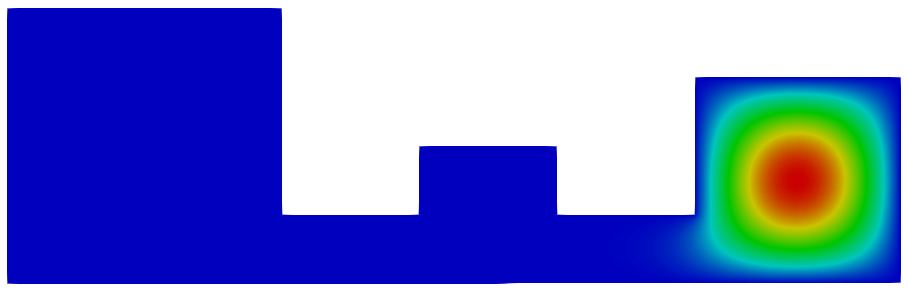}&
\includegraphics[width=0.2\textwidth]{./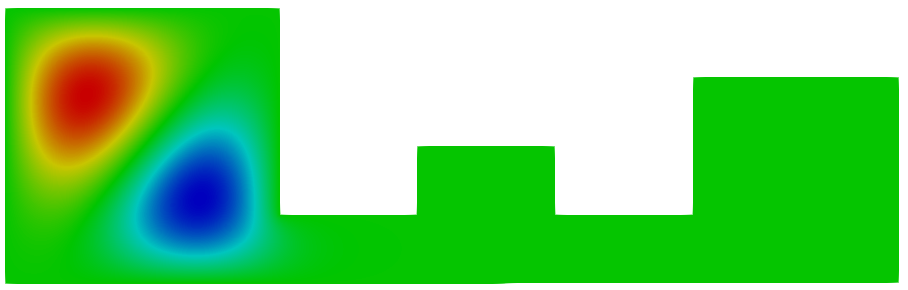}&
\includegraphics[width=0.2\textwidth]{./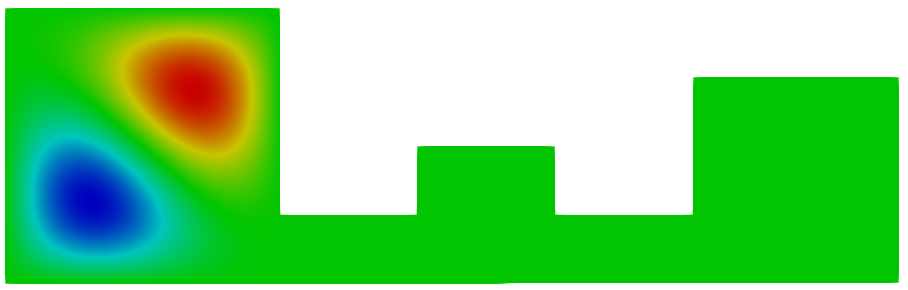}\\\hline
$\lambda=4.5029\,,\,\tau_m=0.9910$&$\lambda=4.8706\,,\,\tau_\ell=0.9983$&
$\lambda=5.3051\,,\,\tau_r=0.9952$&$\lambda=5.4827\,,\,\tau_r=1.0000$\\
\includegraphics[width=0.2\textwidth]{./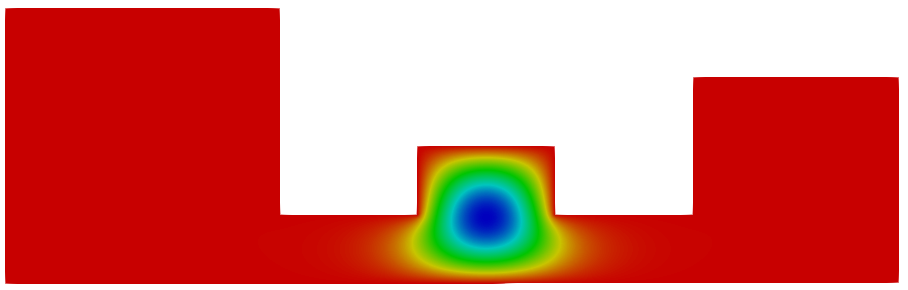}&
\includegraphics[width=0.2\textwidth]{./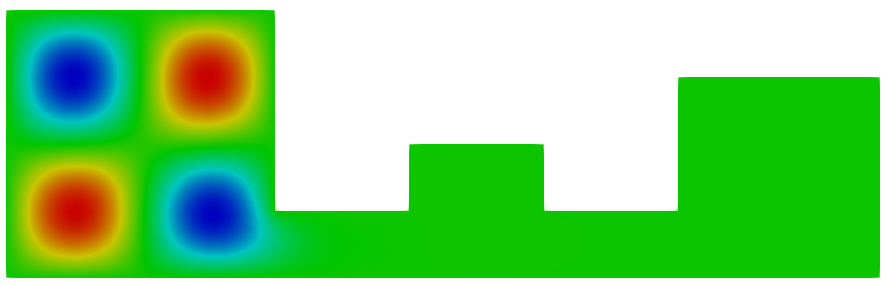}&
\includegraphics[width=0.2\textwidth]{./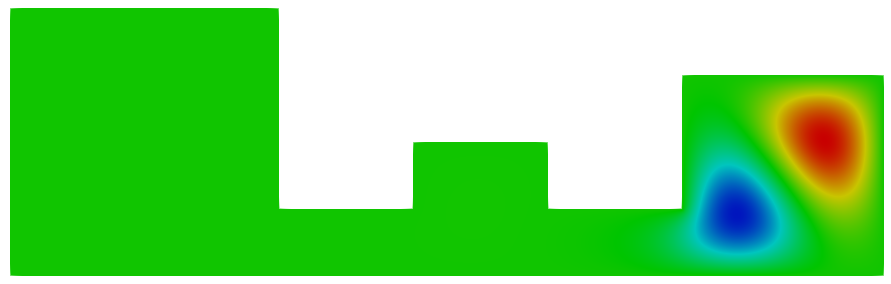}&
\includegraphics[width=0.2\textwidth]{./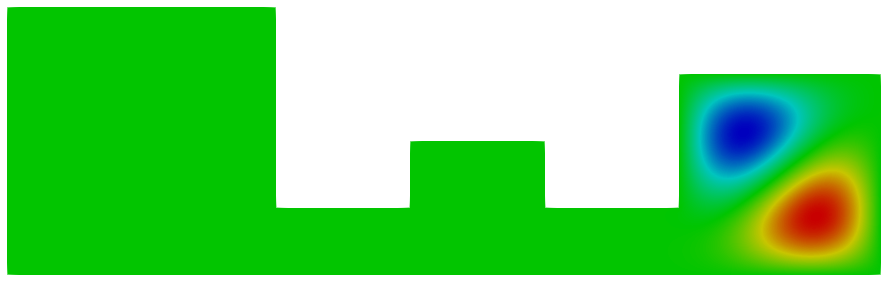}\\\hline
$\lambda=6.0910\,,\,\tau_\ell=0.9974$&$\lambda=6.1682\,,\,\tau_\ell=1.0000$&
$\lambda=7.7167\,,\,\tau_\ell=0.9845$&$\lambda=8.0185\,,\,\tau_\ell=1.0000$\\
\includegraphics[width=0.2\textwidth]{./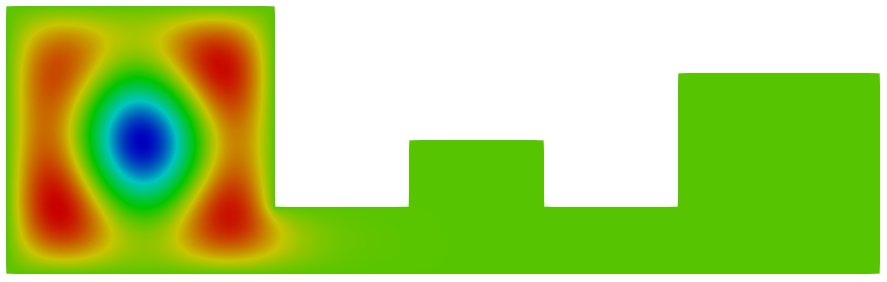}&
\includegraphics[width=0.2\textwidth]{./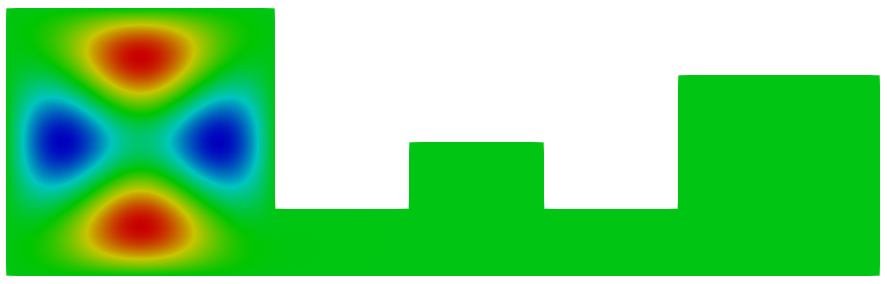}&
\includegraphics[width=0.2\textwidth]{./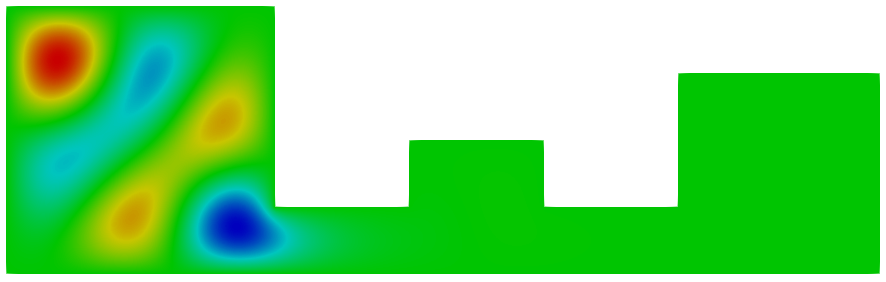}&
\includegraphics[width=0.2\textwidth]{./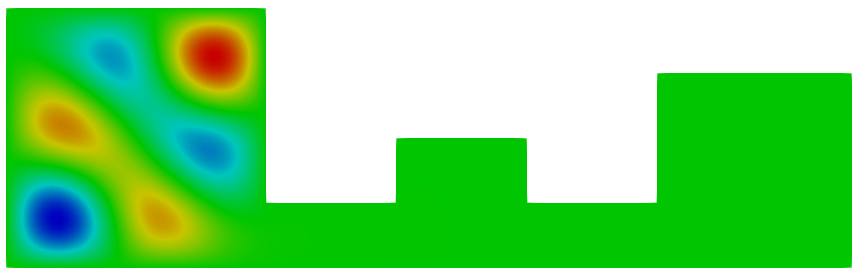}\\\hline
$\lambda=8.3618\,,\,\tau_r=0.9707$&$\lambda=9.3829\,,\,\tau_m=0.7945$&
$\lambda=10.029\,,\,\tau_\ell=0.8359$&$\lambda=10.284\,,\,\tau_r=0.7948$\\
\includegraphics[width=0.2\textwidth]{./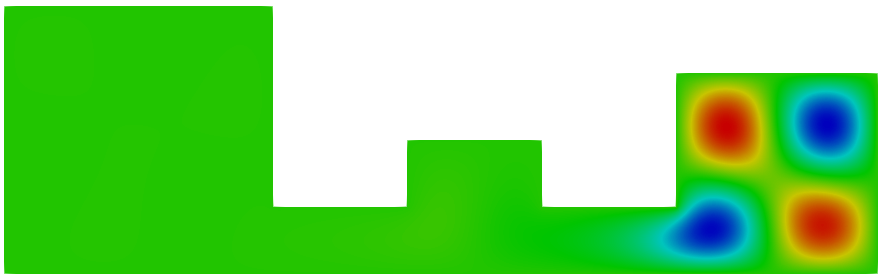}&
\includegraphics[width=0.2\textwidth]{./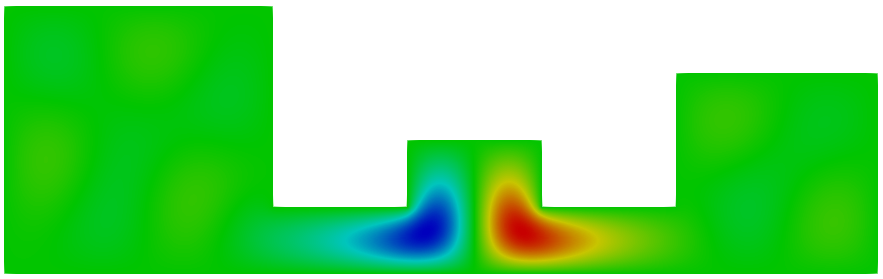}&
\includegraphics[width=0.2\textwidth]{./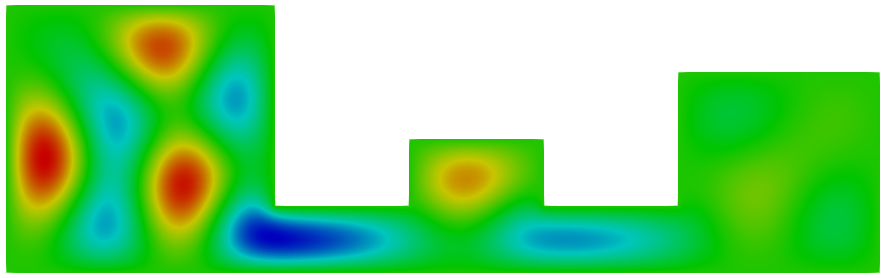}&
\includegraphics[width=0.2\textwidth]{./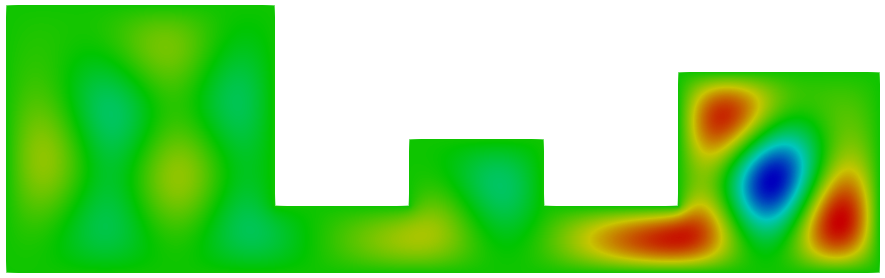}\\\hline
\end{tabular}
\end{figure}
In our localization experiments, we search for eigenvectors that are
localized with tolerance $\delta^*=1/4$, so $\tau^*=\sqrt{15}/4\approx
0.96825$, in each of the three bulbs, for eigenvalues in $[1,33]$.
There are 70 such (discrete) eigenpairs in this portion of the spectrum.
When $R$ is taken to be the left bulb, 20 of these eigenvectors satisfy
the localization tolerance.  When $R$ is taken as the right bulb, 9 of
these eigenvectors satisfy the localization tolerance.  Finally, when
$R$ is taken to be the middle bulb, only one of the these eigenvectors
satisfies the localization tolerance.

When $R$ is taken as the middle bulb, two computed eigenpairs of
$\cL_s$ are found having computed eigenvalues in
$U(1,33,1,\delta^*)$.  This search region was split into smaller
(slightly overlapping)
search regions as discussed in and before
Remark~\ref{AlternativeApproach}, each having aspect ratio
$2/(2r)+1=5$ ($r=s\delta^*=1/4$), that were tested independently using
the Bunimovich filter having $n=32$ poles.   These eigenvalues of $\cL_s$ were
$\mu=4.50447+0.98218\ii$ and $\mu=24.33196+0.96185\ii$.
The corresponding eigenpairs of $\cL$ have $\lambda=4.50292$, for
which the eigenvector is sufficiently localized in $R$, and
$\lambda=24.20972$, for which its eigenvector $\psi$ is not,
$\delta(\psi,R)=0.64489>\delta^*$ ($\tau(\psi,R)=0.76427<\tau^*$).
Contour plots of the real and imaginary parts of the eigenvectors
$\phi$ are given, together with their matched (real) eigenvectors $\psi$, in
Figure~\ref{Fig:MiddleBulb}.
Since the color scheme in each image is relative to the
range of values of the plotted function (blue for the smallest values
and red for the largest),
the ranges of function values are given explicitly in the figures for
added context.
\begin{figure}
\caption{\label{Fig:MiddleBulb} Middle bulb.
Top panel: Contour plots of the eigenvector
$\phi$ of $\cL_s$ corresponding to the true indicator
$\mu=4.50447+0.98218\ii$, and of the matched eigenvector $\psi$ of
$\cL$ (with $\lambda=4.50292$).  
Bottom panel: Contour plots of the eigenvector
$\phi$ of $\cL_s$ corresponding to the false indicator
$\mu=24.33196+0.96185\ii$, and of the matched eigenvector $\psi$ of
$\cL$ (with $\lambda=24.20972$).  The notation ``$f\in[c,d]$'' in the
images below indicates that the range of the function $f$ shown in
a given contour plot is $[c,d]$.}
\centering
\begin{tabular}{|c|c|c|}\hline
$\Re\phi\in[-$0.94,5.0e-4]&$\Im\phi\in[-$1.5e-2,4.4e-3]&$\psi\in[-$0.94,1.3e-3]\\
\includegraphics[width=0.3\textwidth]{./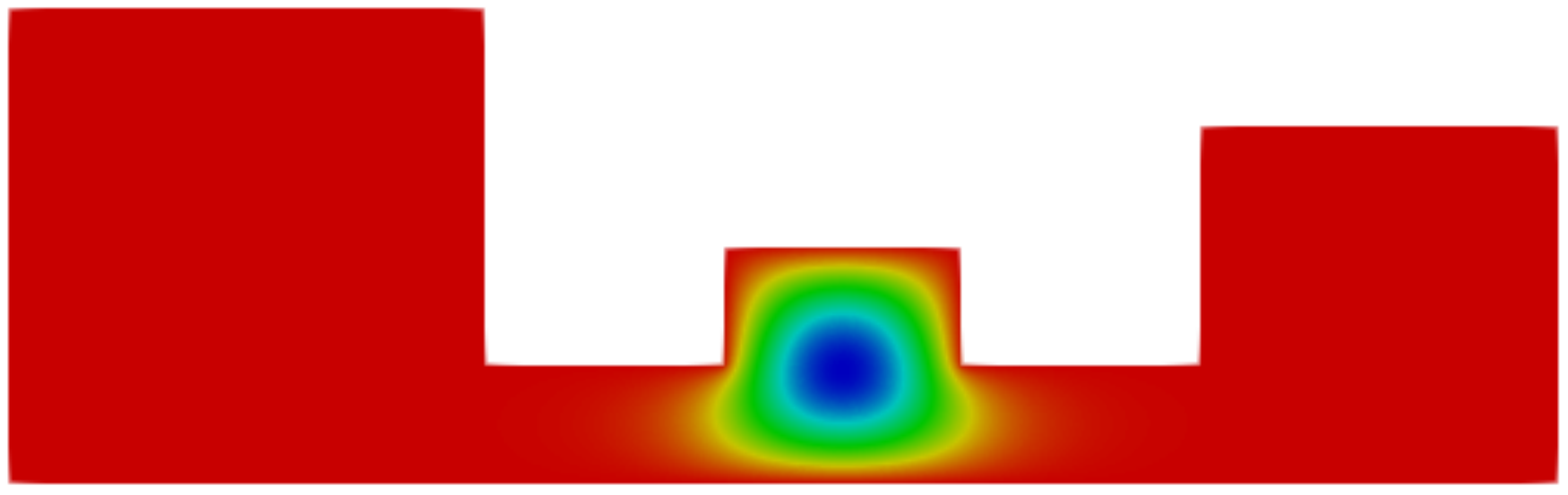}&
\includegraphics[width=0.3\textwidth]{./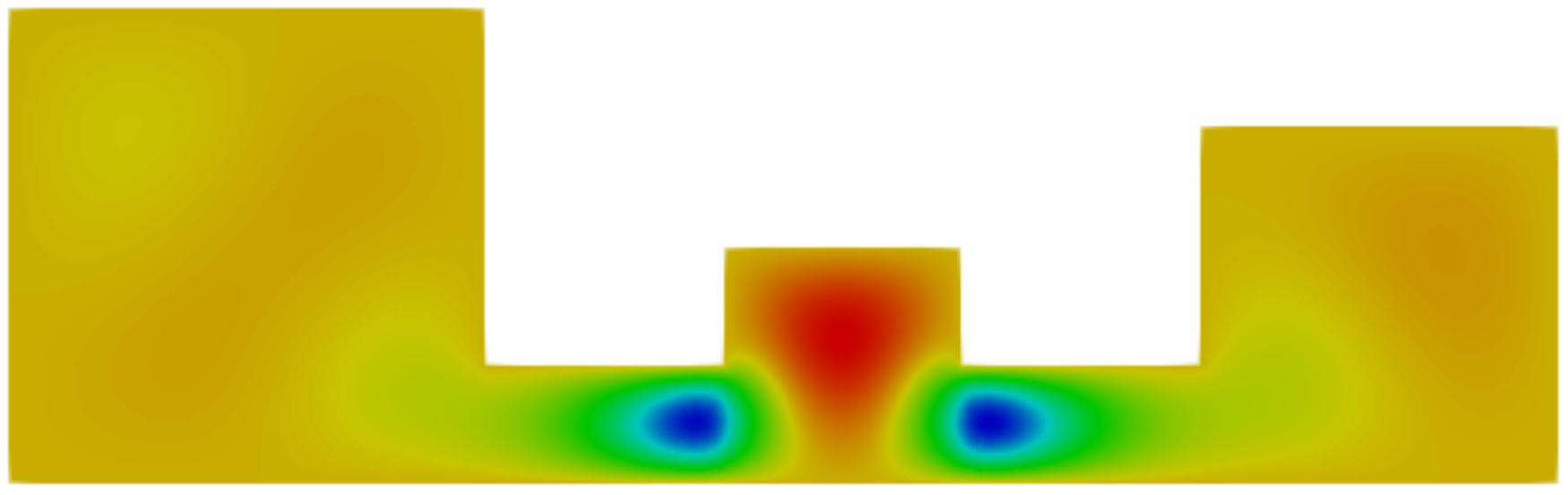}&
\includegraphics[width=0.3\textwidth]{./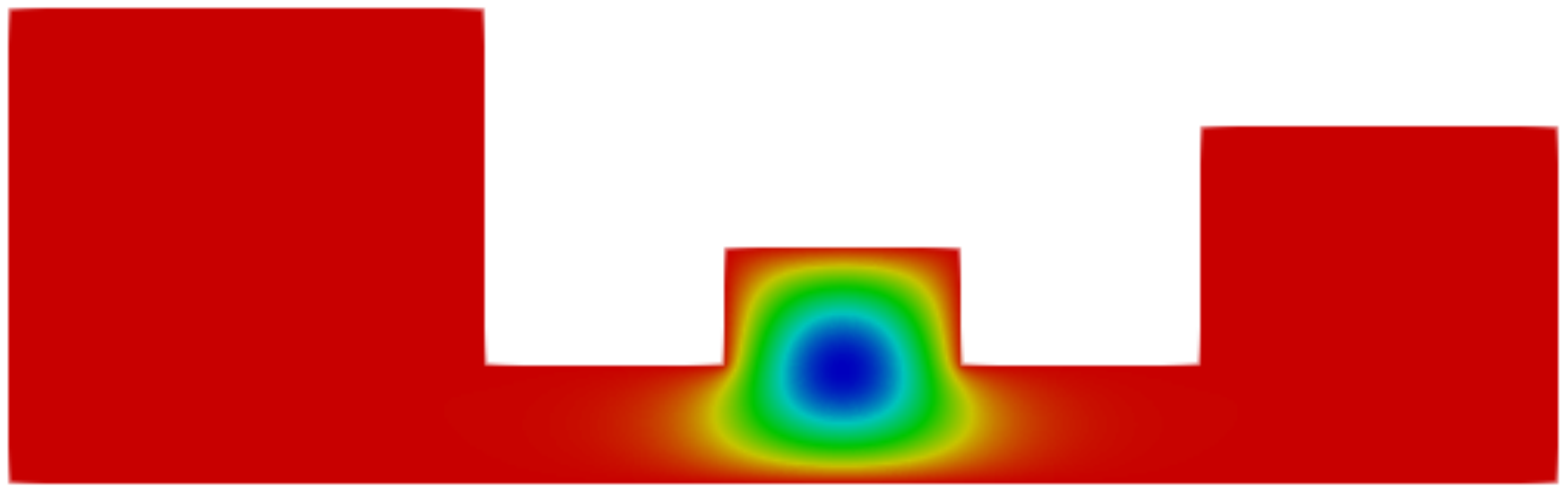}\\\hline
\end{tabular}
\begin{tabular}{|c|c|c|}\hline
$\Re\phi\in[-0.83,1.10]$&$\Im\phi\in[-0.11,0.12]$&$\psi\in[-0.84,0.84]$\\
\includegraphics[width=0.3\textwidth]{./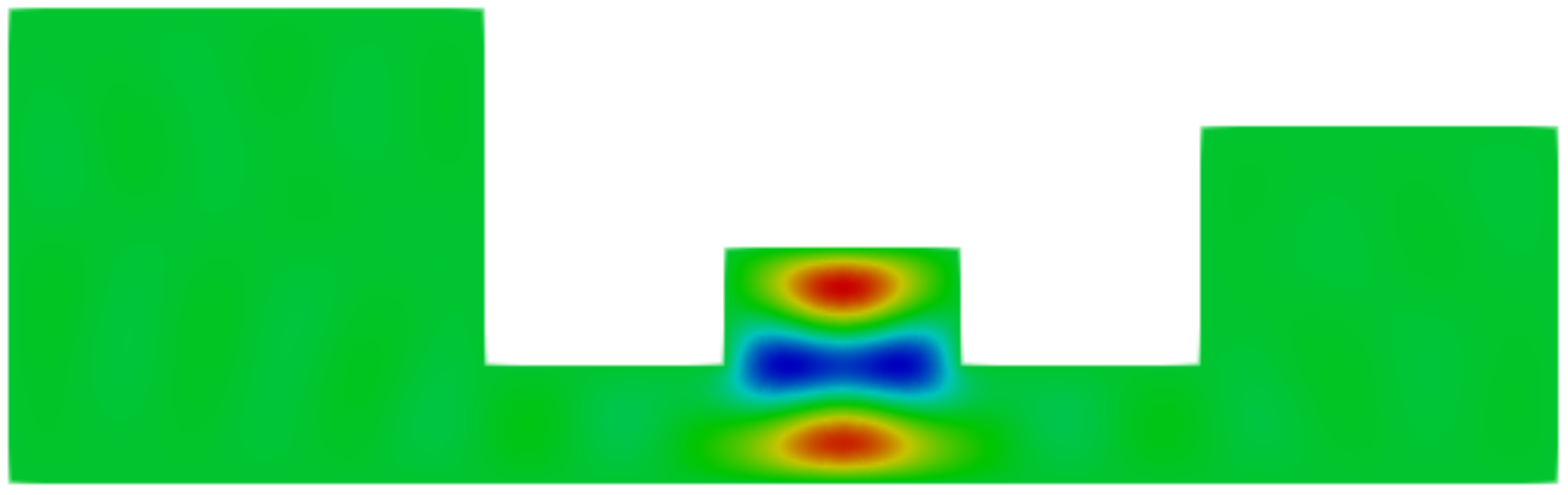}&
\includegraphics[width=0.3\textwidth]{./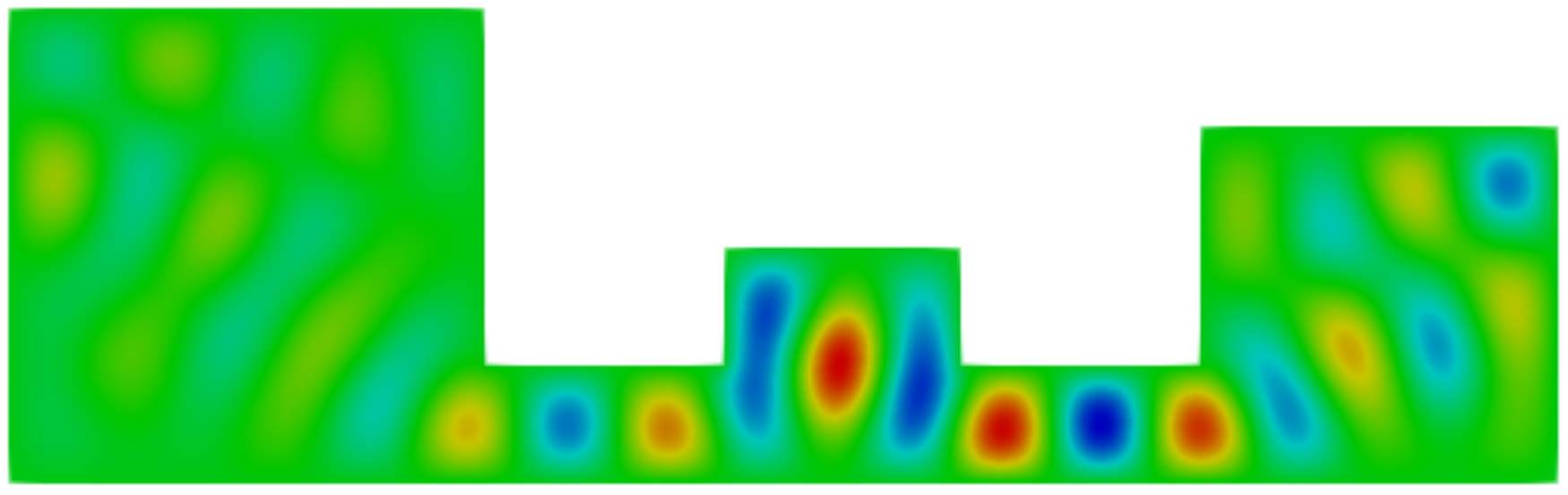}&
\includegraphics[width=0.3\textwidth]{./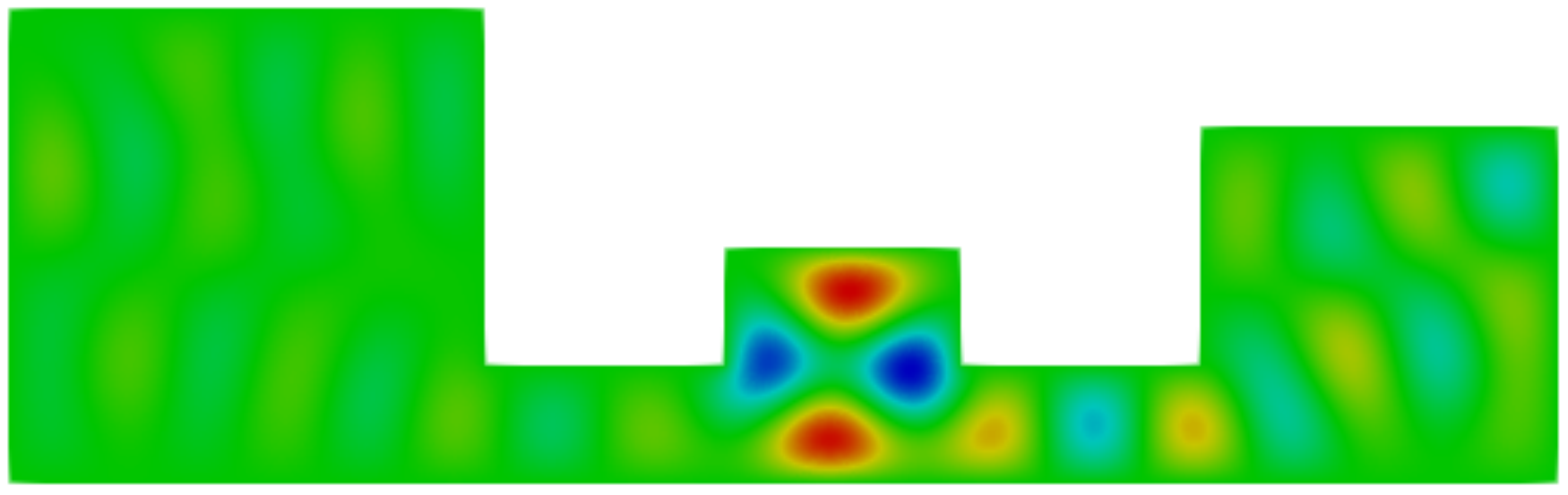}\\\hline
\end{tabular}
\end{figure}
We note that $\max|\Im\phi|$ is significantly smaller than
$\max|\Re\phi|$ for the true indicator---about two orders of
magnitude.  For the false indicator, the difference is about one order
of magnitude.

When $R$ is taken as the right bulb, 14 computed eigenpairs of $\cL_s$
were found whose eigenvalues were in $U(1,33,1,\delta^*)$.  This
search was conducted in the same way for the middle bulb.  Since there
are only 9 eigenvectors of $\cL$ that are localized in $R$ within the
tolerance $\delta^*$, five of the eigenpairs of $\cL_s$ provide
false indicators in this case.   All 14 eigenvalues of $\cL_s$ are given in
Table~\ref{RightBulbTable}, together with the eigenvalues of $\cL$
with which we have matched them and the localization measures
($\delta$-values) of the corresponding eigenvectors of $\cL$.  Those
that fail to make the final cut are emphasized using italics.
\begin{table}
\centering
\begin{tabular}{|cccc|cccc|}\hline
$\Re\mu$&$\Im\mu$&$\lambda$&$\delta(\psi,R)$&$\Re\mu$&$\Im\mu$&$\lambda$&$\delta(\psi,R)$\\\hline
      2.17146& 0.99932&2.17143&0.02610&\textit{18.61842}&\textit{0.99841}&\textit{18.62201}&\textit{0.32601}\\
      5.30605& 0.99047&5.30506&0.09800&\textit{18.98014}&\textit{0.83889}&\textit{18.98051}&\textit{0.48562}\\
      5.48270& 1.00000&5.48270&0.00638&21.86147&0.99711&21.86307&0.07712\\
      8.37589& 0.94815&8.36148&0.24017&27.32128&0.99505&27.32821&0.18688\\
  \textit{10.41327}&\textit{0.82496}&\textit{10.28357}&\textit{0.60658}&\textit{28.12641}&\textit{0.90667}&\textit{28.15248}&\textit{0.33336}\\
        10.96057&0.99984&10.96061&0.02829&28.48136&0.99618&28.48221&0.06920\\
        14.24624&0.99981&14.24624&0.01599&\textit{31.58901}&\textit{0.97649}&\textit{31.58960}&\textit{0.27542}\\\hline
\end{tabular}
\caption{\label{RightBulbTable} Computed eigenvalues $\mu$ of $\cL_s$ in
$U(1,33,1,\delta^*)$ for the right bulb, paired with matched
eigenvalues $\lambda$ of $\cL$ and localization measures $\delta(\psi,R)$ of their
corresponding eigenvectors $\phi$.  The five false indicators are highlighted
in italics.}
\end{table}
Of the five false indicators, a strong case could be made that three
of the corresponding eigenvectors $\psi$ of $\cL$ just barely failed
to make the cut---those for which $0.25<\delta(\psi,R)<0.34$.
In Figure~\ref{RightBulbFP} we provide contour plots, analogous to those in
Figure~\ref{Fig:MiddleBulb}, for the remaining two false indicators and their matched
eigenvectors of $\cL$.
\begin{figure}
\caption{\label{RightBulbFP} Right bulb. Top panel: Contour plots of the eigenvector
$\phi$ of $\cL_s$ corresponding to the false indicator
$\mu=10.41327+0.82496\ii$, and of the matched eigenvector $\psi$ of
$\cL$ (with $\lambda=10.28357$).  
Bottom panel: Contour plots of the eigenvector
$\phi$ of $\cL_s$ corresponding to the false indicator
$\mu=18.98014+0.83889\ii$, and of the matched eigenvector $\psi$ of
$\cL$ (with $\lambda=18.98051$). }
\centering
\begin{tabular}{|c|c|c|}\hline
$\Re\phi\in[-0.63,0.72]$&$\Im\phi\in[-0.28,0.11]$&$\psi\in[-0.61,0.59]$\\
\includegraphics[width=0.3\textwidth]{./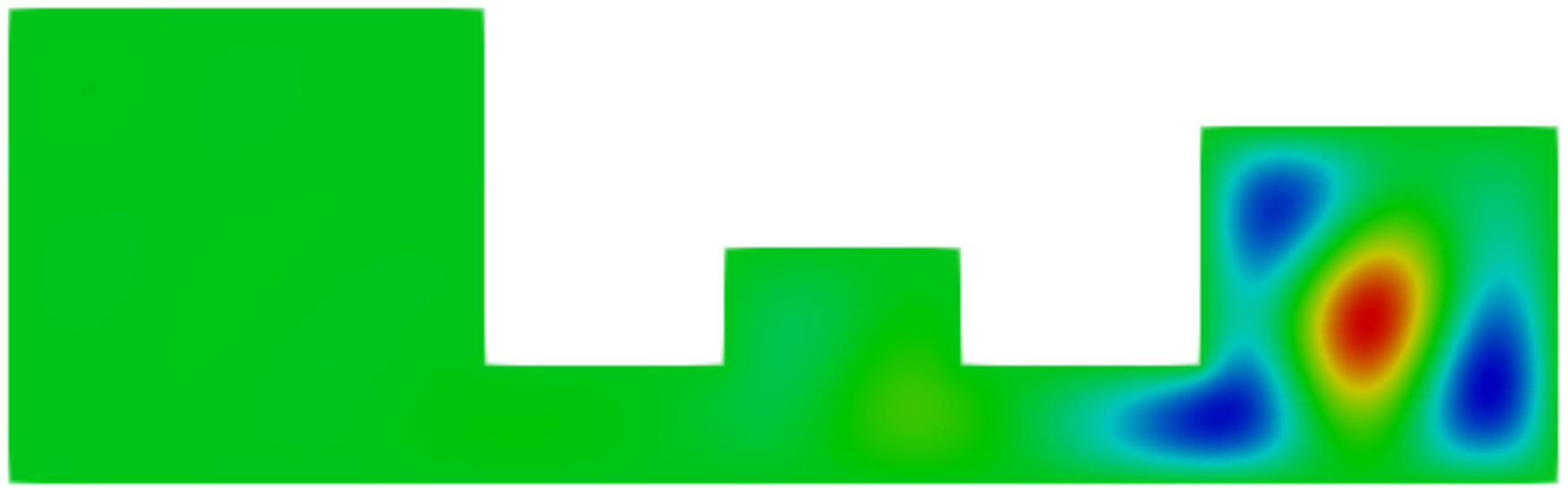}&
\includegraphics[width=0.3\textwidth]{./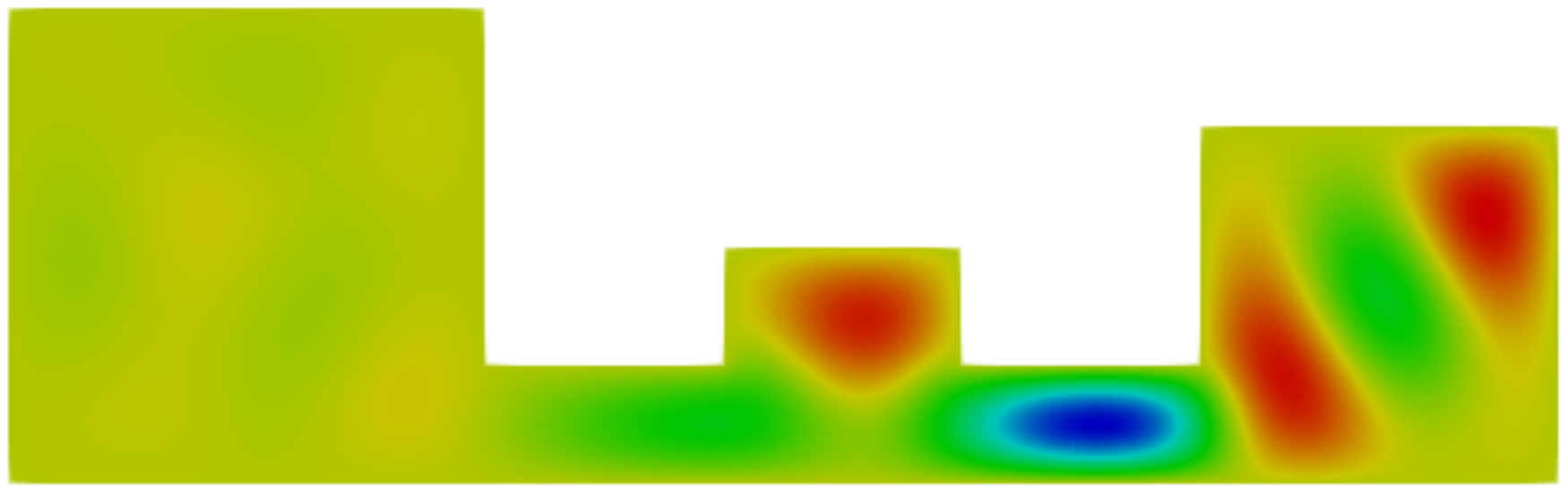}&
\includegraphics[width=0.3\textwidth]{./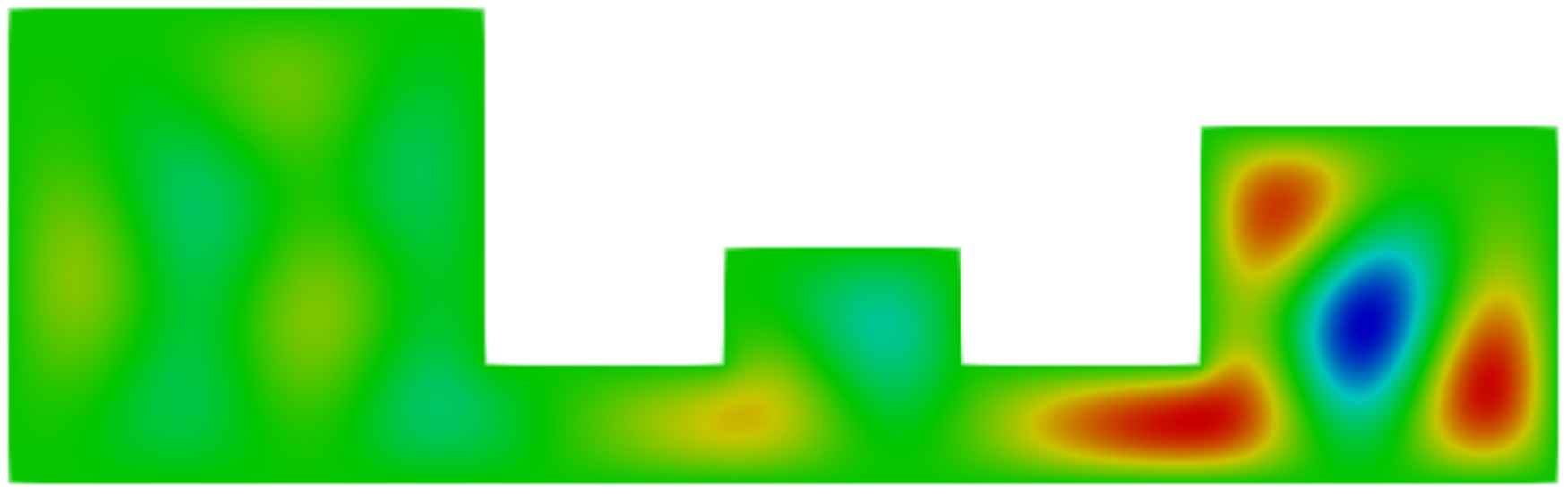}\\\hline
\end{tabular}
\begin{tabular}{|c|c|c|}\hline
$\Re\phi\in[-0.74,0.72]$&$\Im\phi\in[-0.22,0.21]$&$\psi\in[-0.73,0.71]$\\
\includegraphics[width=0.3\textwidth]{./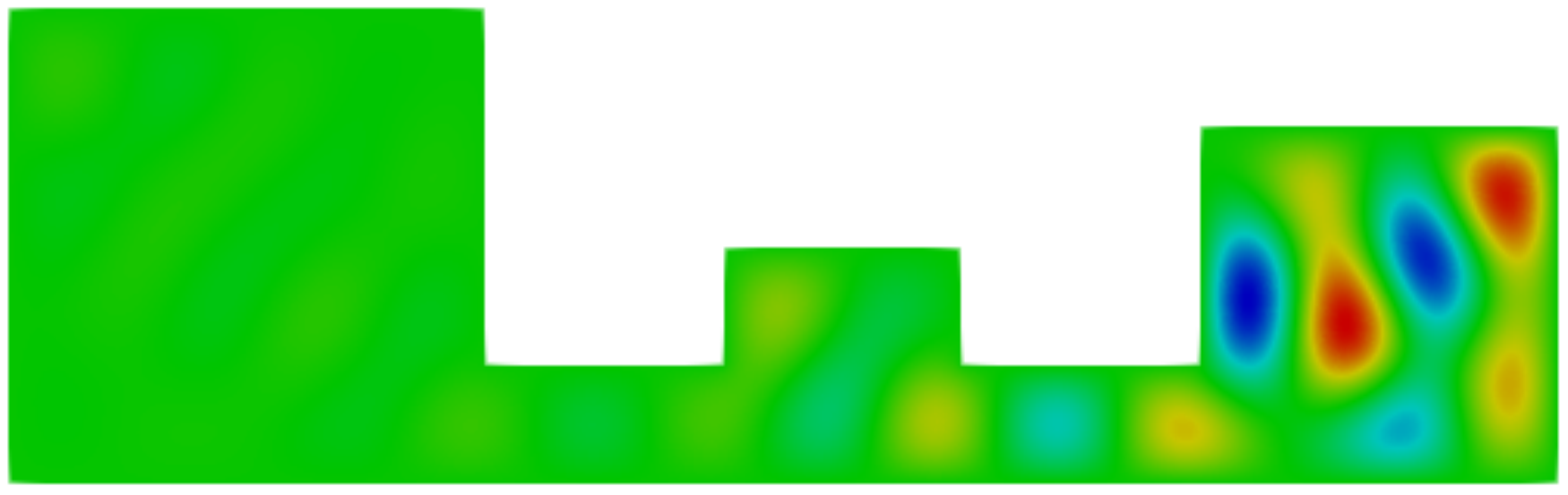}&
\includegraphics[width=0.3\textwidth]{./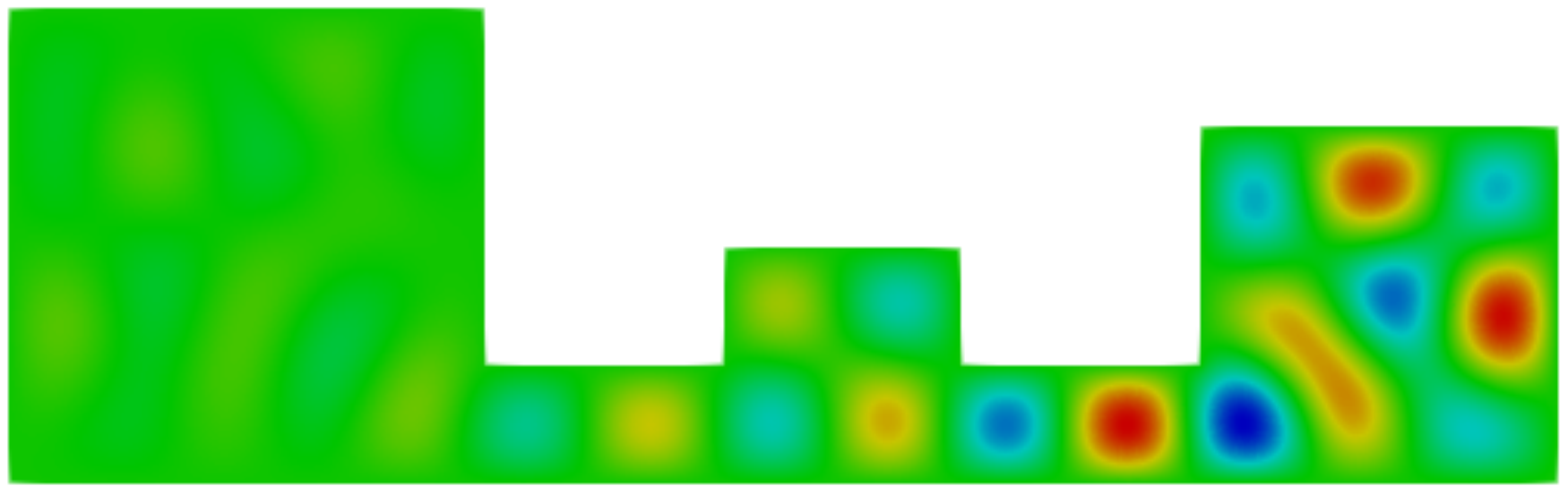}&
\includegraphics[width=0.3\textwidth]{./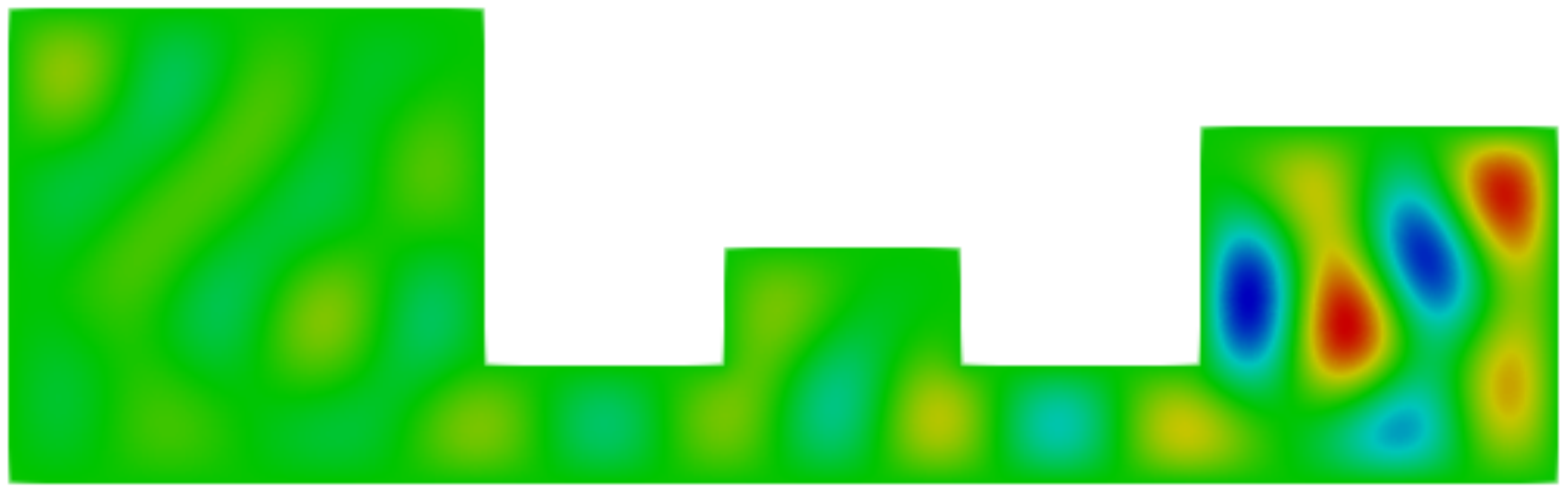}\\\hline
\end{tabular}
\end{figure}

We finally turn to the investigation of localization in the left
bulb.  For this choice of $R$, there are 20 eigenvectors of $\cL$ that
satisfy the localization tolerance.  Taking $U(1,33,1,\delta^*)$ as
before, we compute 29 eigenpairs of $\cL_s$, so there are 9 false
indicators in this case.  Table~\ref{Tab:LeftBulb_s1}, analogous to
Table~\ref{RightBulbTable} for the right bulb, provides the
corresponding numerical data.  Of the 9 false indicators, only three
(perhaps 4) really miss the mark in their predictions of localization
for the matched eigenvectors of $\cL$.
\begin{table}
  \centering
   \begin{tabular}{|cccc|cccc|}\hline
      $\Re(\mu)$&$\Im(\mu)$&$\lambda$&$\delta(\psi,R)$&$\Re(\mu)$&$\Im(\mu)$&$\lambda$&$\delta(\psi,R)$\\\hline
      1.22976&0.99989&1.22975&0.01043&17.86628&0.99922&17.86930&0.03231\\
      3.05287&0.99882&3.05277&0.03393&20.94272&0.99914&20.94341&0.06392\\
      3.08423&1.00000&3.08423&0.00046& \textit{22.62970}& \textit{0.99581}&\textit{22.58585}&\textit{0.44630}\\
      4.87100&0.99661&4.87063&0.05798&22.81320&0.99887&22.81332&0.03824\\
      6.09172&0.99482&6.09100&0.07199&24.62085&0.99792&24.62379&0.13433\\
      6.16815&1.00000&6.16815&0.00195&\textit{24.87619}&\textit{0.94232}&\textit{24.89579}&\textit{0.32321}\\
      7.72286&0.97045&7.71673&0.17517&25.27439&0.88857&25.27430&0.02228\\
      8.01849&0.99999&8.01849&0.00263&27.67153&0.99504&27.67714&0.14656\\
      \textit{10.14045}&\textit{0.84445}&\textit{10.02874}&\textit{0.54897}&30.70848&0.99543&30.70761&0.07065\\
       10.48456& 0.99995&10.48453&0.01301&30.84404&0.99999&30.84395&0.00292\\
       \textit{10.77238}&\textit{0.91878}&\textit{10.84459}&\textit{0.68925}&\textit{31.11016}& \textit{0.82898}&\textit{30.99216}&\textit{0.60879}\\
       \textit{12.33173}&\textit{0.99981}&\textit{12.33501}&\textit{0.29032}&31.92197&0.99340&31.91777&0.18332\\
      15.41686&0.99990&15.41680&0.01730&\textit{32.34692}&\textit{0.93750}&\textit{32.38753}&\textit{0.37163}\\
       \textit{15.95522}&\textit{0.93615}&\textit{16.02489}&\textit{0.26457}&32.65153&0.99340&32.65182&0.10739\\
       \textit{16.03284}&\textit{0.99941}&\textit{16.04093}&\textit{0.28087}&&&&\\\hline
    \end{tabular}  
  \caption{\label{Tab:LeftBulb_s1} Computed eigenvalues $\mu$ of $\cL_s$ in
$U(1,33,1,\delta^*)$ for the left bulb, paired with matched
eigenvalues $\lambda$ of $\cL$ and localization measures $\delta(\psi,R)$ of their
corresponding eigenvectors $\phi$.  The 9 false indicators are highlighted
in italics.}
\end{table}
We also performed the search with $s=1/2$, for eigenpairs of $\cL_s$
with eigenvalues in $U(1,33,1/2,\delta^*)$.  In lieu of the level of
detail provided in Table~\ref{Tab:LeftBulb_s1}, we summarize the
results.  The choice of $s=1/2$ yielded 28 candidates, with one fewer
false indicator, corresponding to $\lambda=30.99216$.
Changing to $s=1/4$ eliminated two more false indicators, those
corresponding to $\lambda=10.02874$ and $\lambda=10.84459$.
Further reducing  to $s=1/8,\, 1/16,\,1/32$ or $1/64$ did not eliminate any of the remaining
false indicators.

\section{Concluding Remarks}\label{Conclusions}
We have provided theory, together with detailed examples illustrating
key results, motivating Algorithm~\ref{ELAT} for
exploring eigenvector localization phenomena.  A partial realization
of Algorithm~\ref{ELAT} was described and tested on a problem exhibiting
multiple instances of localization due to domain geometry early in its
spectrum, providing a ``proof of concept'' for our approach.  What is
missing from this realization is a post-processing phase in which
eigenpairs of $\cL$ are obtained from those of $\cL_s$ automatically,
though some form of inverse iteration was suggested for this.
We have not provided \textit{numerical analysis}, i.e.
theoretical insight into the effects of discretization
errors, for our approach, but intend to pursue that in future work.
Future work will also include extensive testing of a full realization
of Algorithm~\ref{ELAT}, as well as variants discussed in Section~\ref{Template}, on a wide variety of problems, with a view
toward providing guidance on how to set key parameters.

\def\cprime{$'$}


\end{document}